\documentclass[twoside]{article}
\usepackage[letterpaper, left=3cm, right=3cm, top=4cm, bottom=2.5cm]{geometry}
\usepackage{graphicx}
\usepackage{amsmath,amssymb,amsthm}
\usepackage{authblk}
\usepackage[utf8]{inputenc}
\usepackage{microtype}
\usepackage{mathrsfs} 
\usepackage{enumitem}
\usepackage{calc}
\usepackage{esint}
\usepackage{fancyhdr}
\usepackage{xcolor}
\usepackage{float}
\usepackage[labelfont=bf]{caption}
\usepackage{algorithm}
\usepackage{algpseudocode}
\floatname{algorithm}{Algorithm}
\usepackage{doi}
\usepackage[numbers]{natbib}
\usepackage{float}
\usepackage{stmaryrd}
\usepackage{mathtools}
\usepackage{booktabs}
\usepackage{colortbl}
\usepackage{tabulary}
\usepackage{diagbox} 
\usepackage{cleveref}
\usepackage{commath}
\usepackage{multirow}
\usepackage{relsize}
\usepackage{lineno}

\usepackage{physics}
\usepackage{amsmath}
\usepackage{tikz}
\usepackage{pgfplots}
\pgfplotsset{compat=1.18}
\usepackage{mathdots}
\usepackage{yhmath}
\usepackage{cancel}
\usepackage{color}
\usepackage{siunitx}
\usepackage{array}
\usepackage{multirow}
\usepackage{amssymb}
\usepackage{gensymb}
\usepackage{tabularx}
\usepackage{extarrows}
\usepackage{booktabs}
\usetikzlibrary{fadings}
\usetikzlibrary{patterns}
\usetikzlibrary{shadows.blur}
\usetikzlibrary{shapes}
\usetikzlibrary{arrows.meta} 

\usepackage{orcidlink}

\newtheorem{theorem}{Theorem}[section]
\numberwithin{equation}{section}
\newtheorem{proposition}[theorem]{Proposition}

\newtheorem{corollary}[theorem]{Corollary}
\newtheorem{remark}[theorem]{Remark}

\newtheorem{assumption}[theorem]{Assumption}

\usepackage{titlesec}
\titleformat{\section}{\normalfont\scshape\centering}{\thesection.}{0.5em}{}
\titleformat*{\subsection}{\itshape}
\titleformat*{\subsubsection}{\itshape}

 \providecommand{\keywords}[1]
 {
 	{\small\emph{Keywords:} #1}
 }
 \providecommand{\MSC}[1]
 {
 	{\small\emph{AMS MSC (2020):~~} #1}
 }

\definecolor{denim}{rgb}{0.08, 0.38, 0.74}
\definecolor{byzantium}{rgb}{0.44, 0.16, 0.39} 
\definecolor{shamrockgreen}{rgb}{0.0, 0.62, 0.38}

\definecolor{qhnavy}{HTML}{244A64}
\definecolor{qhblue}{HTML}{3E718C}
\definecolor{qhteal}{HTML}{28766E}
\definecolor{qhgold}{HTML}{A66A21}
\definecolor{qhgray}{HTML}{6C7780}

\definecolor{qhgrayfill}{HTML}{F5F7F8}
\definecolor{qhbluefill}{HTML}{EFF5F8}
\definecolor{qhgoldfill}{HTML}{FBF5EB}
\definecolor{qhtealfill}{HTML}{EDF6F4}

\usepackage{hyperref}
\hypersetup{
	colorlinks=true,
	linkcolor=denim,
	citecolor = shamrockgreen,
	filecolor=magenta, 
	urlcolor=byzantium,
}

\newcommand{\qhdashedbox}[1]{%
	\tikz[baseline=(box.base)]{
		\node[
			draw=qhblue!85!black,
			line width=0.55pt,
			dash pattern=on 2pt off 1.2pt,
			rounded corners=0.8mm,
			minimum height=4mm,
			inner xsep=1.2mm,
			inner ysep=0pt
		] (box) {%
			\raisebox{-0.5ex}[0pt][0pt]{\smash{$#1$}}%
		};
	}%
}

\usepackage[textsize=small]{todonotes}
\usepackage{soul}

\begin{document}
	\setlength{\abovedisplayskip}{5.5pt}
	\setlength{\belowdisplayskip}{5.5pt}
	\setlength{\abovedisplayshortskip}{5.5pt}
	\setlength{\belowdisplayshortskip}{5.5pt}

	\title{\vspace{-10mm}QH-GEM: Quantum-Hydrodynamic Generative Modeling\thanks{This work is partially supported by the Office of Naval Research (ONR) under Award No. N00014-24-1-2147, the National Science Foundation (NSF) under Grant DMS-2408877, the Air Force Office of Scientific Research (AFOSR) under Award No. FA9550-22-1-0248, and SURE-AI Centre grant 357482, Research Council of Norway.
    }\vspace{-0.5mm}
    }
	\author[1]{Harbir Antil\,\orcidlink{0000-0002-6641-1449}%
	\thanks{Email: \url{hantil@gmu.edu}}}
\author[2]{Alex Kaltenbach\,\orcidlink{0000-0001-6478-7963}%
	\thanks{Email: \url{kaltenbach@math.tu-berlin.de}}}
\author[3]{Sarswati Shah\,\orcidlink{0000-0002-9861-4603}%
	\thanks{Email: \url{sshah66@gmu.edu}\vspace{-0.5mm}}}
	\date{\today\vspace{-2.5mm}} 
	\affil[1,2,3]{\small{Department of Mathematical Sciences and the Center for Mathematics and Artificial Intelligence (CMAI), George Mason University, Fairfax, VA 22030, USA}}
	\affil[2]{\small{Institute of Mathematics, Technical University of Berlin, Stra\ss e des 17.\ Juni 136, 10623 Berlin}\vspace{-0.5mm}}
	\maketitle

	\pagestyle{fancy}
	\fancyhf{}
	\fancyheadoffset{0cm}
	\addtolength{\headheight}{-0.25cm}
	\renewcommand{\headrulewidth}{0pt} 
	\renewcommand{\footrulewidth}{0pt}
	\fancyhead[CO]{\textsc{Quantum-Hydrodynamic Generative Modeling}}
	\fancyhead[CE]{\textsc{H. Antil, A. Kaltenbach, and S. Shah}}
	\fancyhead[R]{\thepage}
	\fancyfoot[R]{}
	
	\begin{abstract}
In this paper, we develop a deterministic, physically constrained
generative framework based on the Madelung formulation of the
free-particle Schrödinger equation. A reference Born probability density and a controllable initial phase function serve as
initial data for the free Madelung~system,
which couples the Born probability density and phase function through the Bohm~quantum~potential, while the phase function determines the hydrodynamic velocity field. Provided that the Born probability density remains positive and the
hydrodynamic velocity field generates a unique global characteristic
flow,  samples drawn from the reference density and transported along the characteristic flow are distributed according to the evolving Born probability density~at~every~time.

As a consequence, randomness enters only through the initial sampling; the
subsequent generation is deterministic and involves neither stochastic
dynamics nor an independently parameterized time-dependent velocity
field. We formulate terminal-time distribution matching as a
PDE-constrained phase-identification \hspace{-0.1mm}problem \hspace{-0.1mm}and \hspace{-0.1mm}derive \hspace{-0.1mm}the \hspace{-0.1mm}underlying
\hspace{-0.1mm}Hamiltonian~\hspace{-0.1mm}and~\hspace{-0.1mm}\mbox{Fisher-information}~\hspace{-0.1mm}\mbox{structure}. For isotropic Gaussian wave packets, we obtain
explicit dynamics and a necessary and sufficient condition for exact
reachability of isotropic Gaussian targets by quadratic initial phase
functions, together with the corresponding sampling map. For a smooth prescribed potential~\mbox{initial}~\mbox{velocity}~field, we
further establish that the characteristic flow approximates the associated
first-order transport map with an $\mathcal{O}(T^2)$ error, both
uniformly and in the $1$- and $2$-Wasserstein distances. A numerical Gaussian benchmark validates the fully discrete forward solver,
while full-grid PDE-constrained phase identification is demonstrated for
asymmetric~\mbox{bimodal}~\mbox{targets}.
These results provide a mathematical proof of concept for
phase-controlled~quantum~\mbox{hydrodynamics}~as~a~\mbox{deterministic}~\mbox{generative}~\mbox{framework}.
\end{abstract}

	\keywords{quantum hydrodynamics, Madelung transform, free Schrödinger
equation, deterministic~\mbox{generative} modeling, phase control,
PDE-constrained optimization, measure transport, Gaussian wave packets}
	
	\MSC{Primary: 35Q41, 35Q93; Secondary: 81Q93, 35Q49, 49M41, 68T05}

    \medskip

	\section{Introduction}\label{sec:introduction}\thispagestyle{empty}\enlargethispage{15mm}\vspace{-1mm} 

\hspace{5mm}Generative modeling plays a central role in modern machine learning,
with prominent applications in language and image generation
(\textit{cf}.~\cite{BrownEtAl2020,RombachEtAl2022,tsimpos2026one}). In the continuous
setting considered here, generative modeling seeks to produce samples
from a target distribution starting from a tractable reference~\mbox{distribution}. Deterministic transport approaches construct a map
between these distributions: normalizing flows parameterize explicitly
invertible transformations
(\textit{cf}.~\cite{RezendeMohamed2015,PapamakariosEtAl2021}),
continuous normalizing flows represent such transformations as the
terminal flows of neural ordinary differential equations
(\textit{cf}.~\cite{ChenEtAl2018}), and flow matching methods learn
time-dependent velocity fields associated with prescribed~\mbox{probability}~paths~(\textit{cf}.~\cite{LipmanEtAl2023}). Diffusion and score-based
models instead employ stochastic forward noising and learned
reverse-time dynamics
(\textit{cf}.~\cite{HoJainAbbeel2020,SongEtAl2021}). Despite their
different formulations, these approaches typically learn a flexibly
parameterized transformation, velocity field, or score function.

In this paper, we study a complementary design principle:
rather than learning a freely~parameterized transport map or
time-dependent velocity field, we fix a physically motivated evolution
law and use the phase of its initial state as the control variable for
the induced transport. The governing law is the \emph{free-particle
Schrödinger equation}, which describes the evolution of the quantum
state of a nonrelativistic particle in the absence of an external
potential, and seeks a \emph{wave function}
$\Psi\colon\mathbb{R}^d\times[0,T]\to\mathbb{C}$~such~that\vspace{-4.5mm}
\begin{subequations}
\label{eq:introduction_free_schroedinger_problem}
\begin{alignat}{2}
	\mathrm{i}\hbar\,\partial_t\Psi
	&=-\smash{\tfrac{\hbar^2}{2m}}\Delta\Psi
	&&\quad\text{ in }\mathbb{R}^d\times(0,T)\,,
	\label{eq:introduction_free_schroedinger_equation}
	\\
	\Psi(\cdot,0)
	&=\Psi_0
	&&\quad\text{ in }\mathbb{R}^d\,.
	\label{eq:introduction_free_schroedinger_initial_condition}\\[-6mm]\notag
\end{alignat}
\end{subequations}
Here, $m>0$ denotes the mass of the particle and
$\smash{\Psi_0\in L^2(\mathbb{R}^d;\mathbb{C})}$ is a normalized initial wave
function, \textit{i.e.},
$\|\Psi_0\|_{L^2(\mathbb{R}^d;\mathbb{C})}=1$. 
Since the free Schrödinger evolution defines a unitary group~on~$L^2(\mathbb{R}^d)$~and,~thus, preserves the $L^2(\mathbb{R}^d;\mathbb{C})$-norm
(\textit{cf}.~\cite[Chap.~2]{Tao2006}), 
$\rho\coloneqq|\Psi|^2\colon \mathbb{R}^d\times[0,T]\to \mathbb{R}_{\ge 0}$ satisfies
$\int_{\mathbb{R}^d}\rho(\cdot,t)\,\mathrm{d}x=1$ for all 
$t\in[0,T]$ and, therefore, is the position probability density
prescribed by the Born rule~(\textit{cf}.~\cite{Born1926}).

If the wave function $\Psi\colon\mathbb{R}^d\times[0,T]\to\mathbb{C}$ is sufficiently regular
and nowhere~vanishing~on~$\mathbb{R}^d\times[0,T]$, resulting in a positive Born probability density $\rho \colon \mathbb{R}^d\times[0,T]\to \mathbb{R}_{>0}$,~the~simple~connectivity~of $\mathbb{R}^d\times[0,T]$ permits the choice of a globally defined phase function $\theta\colon \mathbb{R}^d\times[0,T]\to \mathbb{R}$~such~that\vspace{-0.5mm}
\begin{align}
	\Psi=\sqrt{\rho}
	\exp(\tfrac{\mathrm{i}}{\hbar}\theta)
	\quad\text{ in }
	\mathbb{R}^d\times[0,T]\,.
	\label{eq:introduction_madelung_transform}\\[-6mm]\notag
\end{align}
The \hspace{-0.1mm}change \hspace{-0.1mm}of \hspace{-0.1mm}variables \hspace{-0.1mm}$\Psi\hspace{-0.175em}\mapsto\hspace{-0.175em} (\rho,\theta)$ \hspace{-0.1mm}is \hspace{-0.1mm}referred \hspace{-0.1mm}to \hspace{-0.1mm}as \hspace{-0.1mm}the \hspace{-0.1mm}\emph{Madelung \hspace{-0.1mm}transform}
\hspace{-0.1mm}(\textit{cf}.~\hspace{-0.1mm}\cite[\hspace{-0.1mm}p.~\hspace{-0.1mm}865]{vonRenesse2012};~\hspace{-0.1mm}see~\hspace{-0.1mm}also~\hspace{-0.1mm}\mbox{\cite{CarlesDanchinSaut2012,
ReddigerPoirier2023}}).  

Substitution of the Madelung transform \eqref{eq:introduction_madelung_transform} into
the free Schrödinger equation \eqref{eq:introduction_free_schroedinger_problem} and separation of the real
and imaginary parts results in the \emph{free-particle Madelung system}
(\textit{cf}.~\cite{Madelung1927,CarlesDanchinSaut2012} or Subsection~\ref{subsec:free_madelung_system}).\linebreak Although the
free Schrödinger equation \eqref{eq:introduction_free_schroedinger_problem} is linear, its polar representation
reveals~a~\mbox{nonlinear}~coupling between the Born probability density $\rho\colon \hspace{-0.1em}\mathbb{R}^d\times[0,T]\hspace{-0.1em}\to\hspace{-0.1em} \mathbb{R}_{>0}$ and the
phase function ${\theta\colon \hspace{-0.1em}\mathbb{R}^d\hspace{-0.1em}\times\hspace{-0.1em}[0,T]\hspace{-0.1em}\to\hspace{-0.1em} \mathbb{R}}$:\linebreak More precisely, the imaginary part yields the continuity equation for
the Born probability~\mbox{density}~$\rho \colon \mathbb{R}^d\times[0,T]\to \mathbb{R}_{>0}$, with probability flux
$\rho \mathbf{v}\colon \mathbb{R}^d\times[0,T]\to \mathbb{R}^d$, where $\mathbf{v}\coloneqq \frac{1}{m}\nabla\theta\colon \mathbb{R}^d\times[0,T]\to \mathbb{R}^d$ is the  hydrodynamic velocity field,
and, therefore, expresses local conservation
of probability, while the real part yields the quantum Hamilton--Jacobi~equation for the phase function $\theta\colon \mathbb{R}^d\times[0,T]\to \mathbb{R}$, in which the density enters
through the Bohm quantum potential
$Q_{\mathrm{B}}(\rho)\coloneqq
-\frac{\hbar^2}{2m}\smash{\frac{\Delta\sqrt{\rho}}{\sqrt{\rho}}}\colon \mathbb{R}^d\times[0,T]\to \mathbb{R}$~(\textit{cf}.~\cite{Bohm1952}).

For the transport interpretation, we additionally assume that the
hydrodynamic velocity field $\mathbf{v}\colon\mathbb{R}^d\times[0,T]\to \mathbb{R}^d$ is continuous and globally
Lipschitz with respect to the spatial variable, uniformly~in~time. Then, setting 
$\rho_0\coloneqq\rho(\cdot,0)=|\Psi_0|^2\colon \mathbb{R}^d\to \mathbb{R}_{>0}$, classical ODE theory 
provides a unique global characteristic flow
$X\colon[0,T]\times\mathbb{R}^d\to\mathbb{R}^d$, and the continuity
equation yields the transport representation\vspace{-4.5mm}
\begin{subequations}
\label{eq:introduction_characteristic_transport}
\begin{alignat}{2}
	\partial_tX(t;x)
	&=\mathbf{v}(X(t;x),t)
	&&\quad\text{ for all }(t,x)
	\in(0,T)\times\mathbb{R}^d\,,
	\label{eq:introduction_characteristic_equation}
	\\
	X(0;x)
	&=x
	&&\quad\text{ for all }x\in\mathbb{R}^d\,,
	\label{eq:introduction_characteristic_initial_condition}
	\\
	[X(t;\cdot)]_{\#}(\rho_0\,\mathrm{d}x)
	&=\rho(\cdot,t)\,\mathrm{d}x
	&&\quad\text{ in }\mathcal{P}(\mathbb{R}^d)
	\quad\text{ for all }t\in[0,T]\,,
	\label{eq:introduction_pushforward_representation}\\[-6mm]\notag
\end{alignat}
\end{subequations}
where $\mathcal{P}(\mathbb{R}^d)$ denotes the space of Borel probability measures on $\mathbb{R}^d$.

The free Schrödinger equation \eqref{eq:introduction_free_schroedinger_equation}, thus, determines the
complete time-dependent transport~map and the terminal-time density
$\rho(\cdot,T)\colon\hspace{-0.1em} \mathbb{R}^d\hspace{-0.1em}\to\hspace{-0.1em} \mathbb{R}_{>0}$. Then, the generative problem~is~to~choose~${\theta_0\colon \hspace{-0.1em}\mathbb{R}^d\hspace{-0.1em}\to\hspace{-0.1em} \mathbb{R}}$ such that the terminal-time measure
$\rho(\cdot,T)\,\mathrm{d}x\in \mathcal{P}(\mathbb{R}^d)$ matches, or at least approximates, a prescribed target \hspace{-0.1mm}distribution. \hspace{-0.1mm}Equivalently, \hspace{-0.1mm}if
\hspace{-0.1mm}$X_0\hspace{-0.175em}\sim\hspace{-0.175em}\rho_0\,\mathrm{d}x$, \hspace{-0.1mm}then
\hspace{-0.1mm}\eqref{eq:introduction_pushforward_representation} \hspace{-0.1mm}implies \hspace{-0.1mm}that \hspace{-0.1mm}$X(t;X_0)
	\hspace{-0.175em}\sim\hspace{-0.175em}\rho(\cdot,t)\,\mathrm{d}x$~\hspace{-0.1mm}for~\hspace{-0.1mm}all~\hspace{-0.1mm}${t\hspace{-0.175em}\in\hspace{-0.175em}[0,T]}$.\linebreak
Once \hspace{-0.1mm}the \hspace{-0.1mm}initial \hspace{-0.1mm}phase \hspace{-0.1mm}function \hspace{-0.1mm}$\theta_0\colon \hspace{-0.175em}\mathbb{R}^d\hspace{-0.175em}\to\hspace{-0.175em} \mathbb{R}$ \hspace{-0.1mm}has \hspace{-0.1mm}been \hspace{-0.1mm}fixed, \hspace{-0.1mm}the \hspace{-0.1mm}terminal-time \hspace{-0.1mm}characteristic~\hspace{-0.1mm}map~\hspace{-0.1mm}$X(T;\cdot)\colon \hspace{-0.175em}\mathbb{R}^d$ $\to\hspace{-0.1em} \mathbb{R}^d$ is
deterministic. Randomness enters only through the sampling of the random variable~$X_0\colon\hspace{-0.1em} \Xi\hspace{-0.1em}\to\hspace{-0.1em} \mathbb{R}^d$ on a probability space $(\Xi,\mathcal{F},\mathbb{P})$; \textit{i.e.}, no stochastic process is simulated~during~\mbox{generation}.

The resulting transport \eqref{eq:introduction_characteristic_transport} is physically constrained: its velocity field $\mathbf{v}=\frac{1}{m}\nabla\theta\colon\mathbb{R}^d\times[0,T]\to \mathbb{R}^d$ is a
potential field rather than an independently parameterized
time-dependent vector field, and~its~evolution is coupled to the
Born probability density $\rho \colon \mathbb{R}^d\times[0,T]\to \mathbb{R}_{>0}$ through~the~Bohm~quantum~potential $Q_{\mathrm{B}}(\rho)\colon \mathbb{R}^d\times[0,T]\to \mathbb{R}$ arising from
the Fisher-information contribution to the Hamiltonian
(\textit{cf}.~\cite{vonRenesse2012,KhesinMisiolekModin2019}).\linebreak The induced dynamics inherit the Hamiltonian and time-reversible
structure of the free Schrödinger equation
\eqref{eq:introduction_free_schroedinger_equation}; in particular, the
Bohm quantum potential $Q_{\mathrm{B}}(\rho)\colon \mathbb{R}^d\times[0,T]\to \mathbb{R}$ acts dispersively rather than as a dissipative,
parabolic regularization
(\textit{cf}.~\cite[Sec.~3.2]{CarlesDanchinSaut2012}; see also \cite[Chap.~2]{Tao2006}). The resulting phase-controlled generative mechanism is summarized in
Figure~\ref{fig:introduction_phase_controlled_transport}.\enlargethispage{3.5mm}

\begin{figure}[H]
\centering
\begin{tikzpicture}[
	font=\scriptsize,
	fixed/.style={
		draw=qhnavy!85!black,
		fill=qhnavy!8,
		line width=0.55pt,
		rounded corners=0.8mm,
		text width=2.35cm,
		minimum height=9.5mm,
		align=center,
		inner sep=2.5pt
	},
	control/.style={
		draw=qhgold!90!black,
		fill=qhgold!10,
		line width=0.55pt,
		rounded corners=0.8mm,
		text width=2.35cm,
		minimum height=9.5mm,
		align=center,
		inner sep=2.5pt
	},
	physics/.style={
		draw=qhblue!85!black,
		fill=qhblue!12,
		line width=0.55pt,
		rounded corners=0.8mm,
		text width=2.75cm,
		minimum height=14.5mm,
		align=center,
		inner sep=2.5pt
	},
    physicsdashed/.style={
	draw=qhblue!85!black,
	fill=qhblue!12,
	line width=0.55pt,
	dash pattern=on 2.2pt off 1.4pt,
	line cap=round,
	rounded corners=0.8mm,
	text width=2.75cm,
	minimum height=14.5mm,
	align=center,
	inner sep=2.5pt
},
	output/.style={
		draw=qhteal!85!black,
		fill=qhteal!10,
		line width=0.55pt,
		rounded corners=0.8mm,
		text width=3.15cm,
		minimum height=17mm,
		align=center,
		inner sep=2.5pt
	},
	forward/.style={
		-{Latex[length=1.6mm]},
		draw=qhblue!80!black,
		line width=0.6pt,
		line cap=round
	},
	controlarrow/.style={
		-{Latex[length=1.6mm]},
		draw=qhgold!90!black,
		line width=0.7pt,
		line cap=round
	}
]

\filldraw[
	fill=qhblue!4,
	draw=qhblue!42,
	line width=0.45pt,
	rounded corners=1mm
]
	(1.90,-1.18) rectangle (9.6,1.31);

\node[
	font=\sffamily\scriptsize\bfseries,
	text=qhblue!90!black, 
] at (5.575,1.04)
	{PRESCRIBED QUANTUM DYNAMICS};

\node[fixed] (density) at (-0.1,0.60) {%
	{\color{qhnavy}\sffamily\bfseries
	Reference density}\\[1mm]
	$\rho_0\qhdashedbox{=|\Psi_0|^2}$};

\node[control] (phase) at (-0.1,-0.60) {%
	{\color{qhgold!90!black}\sffamily\bfseries
	Initial phase control}\\[1mm]
	$\theta_0=\theta(\cdot,0)$};

\node[physicsdashed] (dynamics) at (3.68,-0.07) {%
	{\color{qhblue!90!black}\sffamily\bfseries
	Free Schrödinger evolution}\\[1mm]
	$\Psi_0=\sqrt{\rho_0}
	\exp(\frac{\mathrm{i}\theta_0}{\hbar})$\\[1mm]
	$\Psi_0\longmapsto\Psi(\cdot,t)$};

\node[physics,text width=3.55cm] (transport) at (7.46,-0.07) {%
	{\color{qhblue!90!black}\sffamily\bfseries
	Free Madelung evolution and\\characteristic transport}\\[1.5mm]
	$\mathbf{v}=\frac{1}{m}\nabla\theta$,
	$\partial_tX=\mathbf{v}(X,t)$\\[2mm]
    \mbox{$\rho(\cdot,t)\,\mathrm{d}x
	=[X(t;\cdot)]_{\#}
	(\rho_0\,\mathrm{d}x)$}};

\node[output,text width=3.25cm] (terminal) at (12.1,-0.07) {%
	{\color{qhteal!90!black}\sffamily\bfseries
	Deterministic generation}\\[1mm]
	$X_0\sim\rho_0\,\mathrm{d}x$\\[1mm]
	$X_T\coloneqq X(T;X_0)$\\[1mm]
	$[X_T]_{\#}\mathbb{P}=\rho(\cdot,T)\,\mathrm{d}x
	\approx\mu_\star$};

\draw[forward]
	(density.east) to[out=0,in=160] (dynamics.west);

\draw[controlarrow]
	(phase.east) to[out=0,in=200] (dynamics.west);

\draw[forward]
	(dynamics.east) -- (transport.west);

\draw[forward]
	(transport.east) -- (terminal.west);

\path
	(terminal.south) -- ++(0,-0.18)
	coordinate (feedback-right);

\path
	(phase.south |- feedback-right)
	coordinate (feedback-left);

\draw[controlarrow,densely dashed]
	(terminal.south)
	.. controls +(-2.5,-0.90) and +(2.5,-0.90) ..
	node[
		midway,
		above=0.5mm,
		font=\scriptsize,
		text=qhgold!90!black,
		fill=white,
		inner sep=1pt
	]
	{\hspace{-2.5mm}find $\theta_0$ to match $\mu_\star$}
	(phase.south);
\end{tikzpicture}\vspace{-3mm}
\caption{Phase-controlled quantum-hydrodynamic generation obtained
from the free Schrödinger problem
\eqref{eq:introduction_free_schroedinger_problem} via the Madelung
transform \eqref{eq:introduction_madelung_transform}. The reference
Born probability density
$\rho_0\colon\mathbb{R}^d\to\mathbb{R}_{>0}$ is fixed, whereas the
initial phase function
$\theta_0\colon\mathbb{R}^d\to\mathbb{R}$ 
is selected through 
PDE-constrained~phase~control, so that, given an initial sample $X_0\sim\rho_0\,\mathrm{d}x$,  the terminal-time measure
$\rho(\cdot,T)\,\mathrm{d}x\in \mathcal{P}(\mathbb{R}^d)$, equivalently the law of
$X_T\coloneqq X(T;X_0)$,
approximates the target measure $\mu_\star\in \mathcal{P}(\mathbb{R}^d)$. Once $\theta_0\colon\mathbb{R}^d\to\mathbb{R}$ is fixed,
sample transport is deterministic.}  
\label{fig:introduction_phase_controlled_transport}
\end{figure}\enlargethispage{5mm}\vspace{-6mm}

\paragraph{Relation to previous work and novelty.}
Normalizing flows compose parameterized invertible maps and use the
change-of-variables formula
(\textit{cf}.~\cite{RezendeMohamed2015,PapamakariosEtAl2021});
continuous normalizing flows integrate parameterized time-dependent
velocity fields (\textit{cf}.~\cite{ChenEtAl2018}); and flow matching
learns such fields along prescribed probability paths
(\textit{cf}.~\cite{LipmanEtAl2023}). By contrast, our model
parameterizes neither the terminal-time transport map nor the
time-dependent velocity field. Only the initial phase function is controlled,
while the free Madelung system determines the subsequent transport
through its density-coupled~\mbox{Hamiltonian}~\mbox{dynamics}.

Generative Schrödinger bridges are distinct from the real-time
Schrödinger dynamics considered here, as they describe
entropy-regularized stochastic transport relative to a reference
diffusion
(\textit{cf}.~\cite{Leonard2014,DeBortoliEtAl2021}). 
Other connections between generative models and quantum dynamics
include Q-Flow, which uses normalizing flows to represent and evolve
the Husimi $Q$-function of open quantum systems
(\textit{cf}.~\cite{DuganEtAl2023}); Layden \textit{et al.}, who construct a
Hamiltonian representation of an already trained continuous flow and
a quantum algorithm for its simulation
(\textit{cf}.~\cite{LaydenEtAl2025}); and Lessel's study of the
Wasserstein geometry and optimal transport of a free quantum particle
(\textit{cf}.~\cite{Lessel2025}). Most closely related~is~the~concurrent~work~of~Wang, which learns the score along Bohmian
trajectories and interprets the resulting dynamics as a continuous
normalizing flow for solving the time-dependent Schrödinger equation
(\textit{cf}.~\cite{Wang2026}).

Our direction is converse: rather than learning a prescribed quantum
evolution from trajectories or constructing a quantum Hamiltonian from
a trained flow, we use the physical free-particle Schrödinger evolution
itself as the transport law. The contribution lies not in the Madelung
transform, Bohmian trajectories, or Gaussian wave-packet propagation
individually, but in their combination as a phase-controlled
generative model together with the exact reachability and local
transport results derived~below.\vspace{-2mm}

\paragraph{Main contributions.}
Our principal contributions are:
\begin{itemize}[
noitemsep,
topsep=2pt,
leftmargin=!,
labelwidth=\widthof{$\bullet$}
]
\item \emph{PDE-constrained phase control.}
We formulate deterministic generation as terminal distribution matching
through optimization of the initial phase function of the free Madelung system,
establish its characteristic-flow representation, and identify its
Hamiltonian and Fisher-information structure.
 
\item \emph{Exact Gaussian reachability.}
For quadratic initial phases we derive explicit isotropic Gaussian dynamics, characterize exactly reachable Gaussian targets, and obtain the affine~sampling~map~in~closed~form.

\item \emph{Local nonlinear transport.}
For smooth potential initial velocity fields,
we establish a uniform second-order short-time approximation of the
characteristic flow and corresponding Wasserstein bounds.

\item \emph{Numerical \hspace{-0.1mm}phase \hspace{-0.1mm}identification.}
\hspace{-0.1mm}We \hspace{-0.1mm}validate \hspace{-0.1mm}the \hspace{-0.1mm}solver \hspace{-0.1mm}against \hspace{-0.1mm}exact \hspace{-0.1mm}Gaussian \hspace{-0.1mm}dynamics~\hspace{-0.1mm}and~\hspace{-0.1mm}use~\hspace{-0.1mm}\mbox{full-grid} PDE-constrained optimization to identify initial phases for
asymmetric bimodal targets~in~1D~and~2D.\vspace{-2mm}
\end{itemize}

\paragraph{Organization of the paper.}
In Section~\ref{sec:model}, we introduce the free Madelung problem and
develop its analytical setting, characteristic-flow representation,
Hamiltonian structure, and PDE-constrained phase-identification
formulation. In Section~\ref{sec:exact_gaussian}, we derive explicit
Gaussian wave-packet~\mbox{dynamics}~and~character\-ize exact reachability of isotropic Gaussian targets. In
Section~\ref{sec:local_transport_realization}, we establish local
$\mathcal O(T^2)$-estimates~for the characteristic flow and the
induced probability measures in Wasserstein distances. In Section~\ref{sec:num_validation},~we~va\-lidate the forward
solver on Gaussian dynamics and demonstrate  phase~\mbox{identification}~for~\mbox{bimodal}~\mbox{targets}.\pagebreak

\section{The Free Madelung System as a Generative Transport Model} 
\label{sec:model}

\hspace{5mm}In this section, we formulate the free-particle Madelung system (\textit{cf}.\ \cite{Madelung1927}) and specify the
conditional classical-solution setting used throughout. Then, we 
describe its Hamiltonian structure and deterministic transport
representation, which form the basis of the proposed generative model.

For the remainder of the paper, we adopt units in which $\hbar=1$, while retaining the~particle~mass~${m\hspace{-0.1em}>\hspace{-0.1em}0}$. For a probability density $f\colon \mathbb{R}^d\to\mathbb{R}_{\ge 0}$, $d\in  \mathbb{N}$, \textit{i.e.}, $f\in L^1(\mathbb{R}^d)$ with $\|f\|_{L^1(\mathbb{R}^d)}=1$,~the~\mbox{corresponding} Borel probability
measure $f\,\mathrm{d}x\colon \mathcal{B}(\mathbb{R}^d)\to \mathbb{R}_{\ge 0}$, where $\mathcal{B}(\mathbb{R}^d)$ denotes the Borel $\sigma$-algebra on $\mathbb{R}^d$, is defined by
$(f\,\mathrm{d}x)(A)\coloneqq\int_A{f\,\mathrm{d}x}$ for all 
$A\in\mathcal{B}(\mathbb{R}^d)$. The set of all Borel probability measures
on $\mathbb{R}^d$ is denoted by $\mathcal{P}(\mathbb{R}^d)$; in particular, for each probability density $f\colon  \mathbb{R}^d\to \mathbb{R}_{\ge 0}$,~we~have~that~${f\,\mathrm{d}x\in\mathcal{P}(\mathbb{R}^d)}$.

All subsequent pointwise evaluations of the quantum potential and all
uses of a global characteristic flow are subject to the corresponding
positivity, regularity, and growth assumptions specified below. 

\subsection{Model equations and analytical setting} 
\label{subsec:free_madelung_system}

\hspace{5mm}For a sufficiently regular positive density
$\rho\colon\mathbb{R}^d\times[0,T]\to\mathbb{R}_{>0}$, the associated
\emph{Bohm~quantum~potential}
$Q_{\mathrm{B}}(\rho)\colon
\mathbb{R}^d\times[0,T]\to\mathbb{R}$ is defined by (\textit{cf}.~\cite{Bohm1952})
\begin{align}
	Q_{\mathrm{B}}(\rho)
	\coloneqq
	-\tfrac{1}{2m}
	\smash{\tfrac{\Delta\sqrt{\rho}}{\sqrt{\rho}}}
	\quad\text{in }\mathbb{R}^d\times[0,T]\,.
	\label{eq:bohm_quantum_potential}
\end{align}

The \emph{free Madelung
system}~(\textit{cf}.\ \cite{Madelung1927}), which may be viewed as the
\emph{potential-flow formulation of the quantum hydrodynamic
equations}~(\textit{cf}.\ \cite{CarlesDanchinSaut2012}), seeks a \emph{Born probability density}  $\rho\colon\mathbb{R}^d\times[0,T]\to\mathbb{R}_{>0}$,~\textit{i.e.},  $\rho(\cdot,t)\colon\hspace{-0.15em} \mathbb{R}^d\hspace{-0.15em}\to \hspace{-0.15em}\mathbb{R}_{>0}$ is a probability density for all $t\hspace{-0.15em}\in\hspace{-0.15em} [0,T]$, and a  \emph{phase function}
${\theta\colon\hspace{-0.15em}\mathbb{R}^d\hspace{-0.15em}\times\hspace{-0.15em}[0,T]\hspace{-0.15em}\to\hspace{-0.15em}\mathbb{R}}$~such~that\vspace{-4.5mm}
\begin{subequations}
\label{eq:model}
\begin{alignat}{2}
	\partial_t\rho
	+\tfrac{1}{m}\operatorname{div}(\rho\nabla\theta)
	&=0
	&&\quad\text{in }\mathbb{R}^d\times(0,T)\,,
	\label{eq:model_density}
	\\
	\partial_t\theta
	+\tfrac{1}{2m}|\nabla\theta|^2
	+Q_{\mathrm{B}}(\rho)
	&=0
	&&\quad\text{in }\mathbb{R}^d\times(0,T)\,.
	\label{eq:model_phase}
\end{alignat}
\end{subequations}
Moreover, the free Madelung system \eqref{eq:model} is supplemented by the initial conditions
\begin{subequations}
\label{eq:initial_conditions}
\begin{alignat}{2}
	\rho(\cdot,0)
	&=\rho_0
	&&\quad\text{ in }\mathbb{R}^d\,,
	\label{eq:initial_density}
	\\
	\theta(\cdot,0)
	&=\theta_0
	&&\quad\text{ in }\mathbb{R}^d\,,
	\label{eq:initial_phase}
\end{alignat}
\end{subequations}
where $\rho_0\colon\mathbb{R}^d\to\mathbb{R}_{>0}$ is a probability
density and
$\theta_0\colon\mathbb{R}^d\to\mathbb{R}$ is the prescribed~initial~phase~function. 

The associated \emph{hydrodynamic velocity field} $\mathbf{v}\colon \mathbb{R}^d\times[0,T]\to \mathbb{R}^d$ is given by\vspace{-0.5mm}
\begin{align}
	\mathbf{v}
	\coloneqq\tfrac{1}{m}\nabla\theta
    \quad\text{in }\mathbb{R}^d\times[0,T]\,.
	\label{eq:velocity_field}\\[-6mm]\notag
\end{align}
In particular, the function $\frac{1}{m}\theta \colon \mathbb{R}^d\times[0,T]\to \mathbb{R}$ is a velocity potential, and
equation \eqref{eq:model_density}~is~the~conti\-nuity equation with probability
current $\rho\mathbf{v}\colon \mathbb{R}^d\times[0,T]\to \mathbb{R}^d$, expressing local conservation~of~probability, while
equation~\eqref{eq:model_phase} is the quantum Hamilton--Jacobi
equation in which the phase function is coupled to the Born probability density through the Bohm
quantum potential \eqref{eq:bohm_quantum_potential}.

\subsubsection{Analytical structure and weak formulations}

\hspace{5mm}On every time interval on which the Madelung transform \eqref{eq:introduction_madelung_transform} is
well-defined, the free Madelung system \eqref{eq:model} is equivalent
to the free Schrödinger equation \eqref{eq:introduction_free_schroedinger_equation}, in which case it inherits the~\mbox{Hamiltonian}, time-reversible, \hspace{-0.15mm}and \hspace{-0.15mm}dispersive \hspace{-0.15mm}structure \hspace{-0.15mm}of \hspace{-0.15mm}the \hspace{-0.15mm}Schrödinger \hspace{-0.15mm}evolution
\hspace{-0.15mm}(\textit{cf}.~\hspace{-0.15mm}\cite[\hspace{-0.15mm}Sec.~\hspace{-0.15mm}2]{KhesinMisiolekModin2019},
\hspace{-0.15mm}\cite[\hspace{-0.1mm}Chap.~\hspace{-0.15mm}2]{Tao2006},~\hspace{-0.15mm}and~\hspace{-0.15mm}\cite{CarlesDanchinSaut2012}).

Accordingly, the Bohm quantum potential $Q_{\mathrm{B}}(\rho)\colon
\mathbb{R}^d\times(0,T)\to\mathbb{R}$ acts dispersively and does not
provide a parabolic or dissipative regularization. Moreover, its quotient representation in
\eqref{eq:bohm_quantum_potential} is undefined at vacuum points in $\mathbb{R}^d\times[0,T]$,
\textit{i.e.}, at points $(x,t)\in \mathbb{R}^d\times[0,T]$ at which $\rho(x,t)=0$. Finite-energy weak-solution theories allowing vacuum are available for
quantum~\mbox{hydrodynamic}~\mbox{systems}~(\textit{cf}.~\mbox{\cite{AntonelliMarcati2009,AntonelliMarcati2012}}). More precisely, these formulations retain the continuity equation
\eqref{eq:model_density} in the distributional sense and replace the
phase equation \eqref{eq:model_phase} by the corresponding momentum
balance equation obtained formally from
\eqref{eq:model_density} and the spatial gradient of
\eqref{eq:model_phase}. Their primary variables are the Born probability~density\linebreak
$\rho\colon \hspace{-0.1em}\mathbb{R}^d\times[0,T]\hspace{-0.15em}\to\hspace{-0.15em} \mathbb{R}_{\ge 0}$ \hspace{-0.1mm}and \hspace{-0.1mm}a \hspace{-0.1mm}current \hspace{-0.1mm}$\mathbf{J}\colon \hspace{-0.1em}\mathbb{R}^d\times[0,T]\hspace{-0.15em}\to\hspace{-0.15em} \mathbb{R}^d$, \hspace{-0.1mm}with
\hspace{-0.1mm}$\mathbf{J}\hspace{-0.15em}=\hspace{-0.15em}\rho\mathbf{v}$~in~$\{\rho\hspace{-0.1em}>\hspace{-0.1em}0\}$~\hspace{-0.1mm}(\textit{cf}.~\hspace{-0.1mm}\mbox{\cite[\hspace{-0.1mm}Def.~\hspace{-0.1mm}2.1~\hspace{-0.1mm}\&~\hspace{-0.1mm}Sec.~\hspace{-0.1mm}3]{AntonelliMarcati2012}}). 
The contribution
$-\frac{1}{m}\rho\nabla Q_{\mathrm B}(\rho)$ on the right-hand side of
the corresponding momentum balance~is~rewritten in distributional
divergence form, so that the quotient in \eqref{eq:bohm_quantum_potential} need not be
defined~\mbox{pointwise}~in~${\{\rho\hspace{-0.1em}=\hspace{-0.1em}0\}}$
(\textit{cf}.~\cite[Eq.~(1.3) and Def.~2.1]{AntonelliMarcati2012}). Moreover, the hydrodynamic velocity field can be recovered
from the weak variables \hspace{-0.1mm}through
\hspace{-0.1mm}$\mathbf{v}\hspace{-0.10em}=\hspace{-0.10em}\tfrac{1}{\rho}\mathbf{J}$~\hspace{-0.1mm}in~\hspace{-0.1mm}$\{\rho\hspace{-0.10em}>\hspace{-0.10em}0\}$, \hspace{-0.1mm}and \hspace{-0.1mm}the \hspace{-0.1mm}formulation \hspace{-0.1mm}does
\hspace{-0.1mm}not \hspace{-0.1mm}require \hspace{-0.1mm}a~\hspace{-0.1mm}\mbox{globally}~\hspace{-0.1mm}\mbox{defined}~\hspace{-0.1mm}phase~\hspace{-0.1mm}\mbox{function} (\textit{cf}.~\mbox{\cite[Sec.~3]{AntonelliMarcati2012}},~see~also~\cite{ReddigerPoirier2023}). The global weak-existence result in
\cite{AntonelliMarcati2009} does not include uniqueness, and this weak
solution concept does not furnish the classical characteristic flow ${X\colon\hspace{-0.15em}[0,T]\hspace{-0.15em}\times\hspace{-0.15em}\mathbb{R}^d\hspace{-0.15em}\to\hspace{-0.15em}\mathbb{R}^d}$~\mbox{required}~\mbox{below}.

Since our analysis instead
evaluates the Bohm quantum potential $Q_{\mathrm{B}}(\rho)\colon \mathbb{R}^d\times[0,T]\to \mathbb{R}$ in \eqref{eq:model_phase} pointwise and
uses the unique global classical characteristic flow $X\colon[0,T]\times\mathbb{R}^d\to\mathbb{R}^d$ generated by the hydrodynamic velocity field
$\mathbf{v}\colon \mathbb{R}^d\times[0,T]\to \mathbb{R}^d$ from \eqref{eq:velocity_field}, we impose the~following~standing~hypothesis:

\begin{assumption}[Classical solution and characteristic flow]
\label{ass:classical_solution_setting}
We impose conditions~(\hyperlink{A.1}{A.1}) and
(\hyperlink{A.2}{A.2}) throughout. Conditions
(\hyperlink{A.3}{A.3}) and
(\hyperlink{A.4}{A.4}) are invoked only when the corresponding
derivatives or characteristic flow are used.

\begin{itemize}[noitemsep,leftmargin=!,labelwidth=\widthof{(A.3)},topsep=2pt,font=\upshape
]
	\item[(A.1)] \hypertarget{A.1}{}
    The free Madelung problem \eqref{eq:model}--\eqref{eq:initial_conditions} admits a unique classical solution $(\rho,\theta)\colon\mathbb{R}^d\times [0,T]\to \mathbb{R}_{\ge 0}\times\mathbb{R}$ satisfying
	\begin{align}
		(\rho,\theta)
		&\in
		[
		C^1([0,T];C^0_{\mathrm{loc}}(\mathbb{R}^d))
		\cap
		C^0([0,T];C^2_{\mathrm{loc}}(\mathbb{R}^d))
		]^2\,.
		\label{eq:classical_solution_regularity}
	\end{align}
	Moreover, $\rho(\cdot,t)\colon \mathbb{R}^d\to \mathbb{R}_{\ge 0}$ is a probability density for all
	$t\in[0,T]$;

	\item[(A.2)] \hypertarget{A.2}{}
	The Born probability density remains strictly positive:
	\begin{alignat}{2}
		\rho(x,t)
		&>0
		&&\quad\text{ for all }(x,t)
		\in\mathbb{R}^d\times[0,T]\,;
		\label{eq:strict_positivity}
	\end{alignat}

	\item[(A.3)] \hypertarget{A.3}{}
	Whenever $\nabla Q_{\mathrm{B}}(\rho)$ or
	$\partial_t\mathbf{v}$ is evaluated pointwise, we additionally
	assume that
	\begin{align}
		(\rho,\theta)
		&\in
		C^0([0,T];C^3_{\mathrm{loc}}(\mathbb{R}^d))
		\times
		C^1([0,T];C^1_{\mathrm{loc}}(\mathbb{R}^d))\,;
		\label{eq:additional_solution_regularity}
	\end{align}

	\item[(A.4)] \hypertarget{A.4}{}
	Whenever a global characteristic flow $X\colon[0,T]\times\mathbb{R}^d\to\mathbb{R}^d$ satisfying \eqref{eq:introduction_characteristic_equation} and
\eqref{eq:introduction_characteristic_initial_condition} is used, we assume that
	there exist constants $L_T,C_T>0$ such that
	\begin{subequations}
	\label{eq:velocity_conditions}
	\begin{alignat}{2}
		|\mathbf{v}(x,t)-\mathbf{v}(y,t)|
		&\leq L_T|x-y|
		&&\quad\text{ for all }x,y\in\mathbb{R}^d
		\text{ and }t\in[0,T]\,,
		\label{eq:velocity_global_lipschitz}
		\\
		|\mathbf{v}(x,t)|
		&\leq C_T(1+|x|)
		&&\quad\text{ for all }(x,t)
		\in\mathbb{R}^d\times[0,T]\,.
		\label{eq:velocity_linear_growth}
	\end{alignat}
	\end{subequations}
\end{itemize}
\end{assumption}

Under conditions (\hyperlink{A.1}{A.1}) and
(\hyperlink{A.2}{A.2}) of Assumption~\ref{ass:classical_solution_setting}, the Bohm quantum potential
\eqref{eq:bohm_quantum_potential}~and~all terms in \eqref{eq:model} are
well-defined pointwise. Notice that the strict positivity condition
\eqref{eq:strict_positivity} does not require a uniform lower
bound $\rho\geq c>0$ in $\mathbb{R}^d\times[0,T]$, for some $c\in \mathbb{R}_{>0}$, which, for every $t\in [0,T]$,  would be incompatible with the
integrability requirement $\rho(\cdot,t)\in L^1(\mathbb{R}^d)$  of the probability~\mbox{density}~${\rho(\cdot,t)\colon \mathbb{R}^d\to \mathbb{R}_{\ge 0}}$.

Moreover, Conditions~(\hyperlink{A.1}{A.1}) and
(\hyperlink{A.4}{A.4}) yield the unique global classical
characteristic flow used below (\textit{cf}.\
Proposition~\ref{prop:characteristic_flow_representation}).

\begin{remark}[Relation to Schrödinger well-posedness]
\label{rem:schroedinger_well_posedness}
Assumption~\ref{ass:classical_solution_setting} is a conditional
solution~hypothe\-sis \hspace{-0.1mm}rather \hspace{-0.1mm}than \hspace{-0.1mm}a \hspace{-0.1mm}consequence \hspace{-0.1mm}of \hspace{-0.1mm}a \hspace{-0.1mm}general
\hspace{-0.1mm}well-posedness \hspace{-0.1mm}theorem \hspace{-0.1mm}for \hspace{-0.1mm}the \hspace{-0.1mm}free \hspace{-0.1mm}Madelung~\hspace{-0.1mm}problem~\hspace{-0.1mm}\mbox{\eqref{eq:model}--\eqref{eq:initial_conditions}}.\linebreak
Its relation to standard Schrödinger theory, nevertheless, can  be made
explicit. Let
$\Psi_0\hspace{-0.1em}\coloneqq\hspace{-0.1em}\sqrt{\rho_0}\exp(\mathrm{i}\theta_0) \colon \hspace{-0.1em}\mathbb{R}^d$ $\to \mathbb{C}$ and suppose that
$\Psi_0\in H^s(\mathbb{R}^d;\mathbb{C})$ for some $s>\frac{d}{2}+2$. In the units
$\hbar=1$ used throughout~the~paper,\linebreak the \hspace{-0.1mm}unitary-group
\hspace{-0.1mm}representation \hspace{-0.1mm}of \hspace{-0.1mm}the \hspace{-0.1mm}free \hspace{-0.1mm}Schrödinger \hspace{-0.1mm}evolution
\hspace{-0.1mm}(\textit{cf}.~\hspace{-0.1mm}\cite[Chap.~\hspace{-0.1mm}2]{Cazenave2003};
\hspace{-0.1mm}see~\hspace{-0.1mm}also~\hspace{-0.1mm}\mbox{\cite[Chap.~\hspace{-0.1mm}2]{Tao2006}}) implies that the free
Schrödinger problem
\eqref{eq:introduction_free_schroedinger_problem} admits a unique
global solution satisfying
\begin{align}
	\Psi
	&\in
	C^0(\mathbb{R};H^s(\mathbb{R}^d;\mathbb{C}))
	\cap
	C^1(\mathbb{R};H^{s-2}(\mathbb{R}^d;\mathbb{C}))\,.
	\label{eq:schroedinger_solution_regularity}
\end{align}
Since $\smash{\|\Psi_0\|_{L^2(\mathbb{R}^d)}^2=\|\rho_0\|_{L^1(\mathbb{R}^d)}=1}$ (as $\rho_0\colon \mathbb{R}^d\to \mathbb{R}_{\ge 0}$ is a probability density) and free Schrödinger~evolu\-tion \hspace{-0.1mm}is \hspace{-0.1mm}unitary \hspace{-0.1mm}on
\hspace{-0.1mm}$L^2(\mathbb{R}^d;\mathbb{C})$ \hspace{-0.1mm}and, \hspace{-0.1mm}thus, \hspace{-0.1mm}preserves \hspace{-0.1mm}the \hspace{-0.1mm}$L^2(\mathbb{R}^d;\mathbb{C})$-norm
\hspace{-0.1mm}(\textit{cf}.~\hspace{-0.1mm}\cite[Chap.~\hspace{-0.1mm}2]{Tao2006}),~\hspace{-0.1mm}for~\hspace{-0.1mm}\mbox{every}~\hspace{-0.1mm}${t\hspace{-0.15em}\in\hspace{-0.15em}[0,T]}$, the function
$|\Psi(\cdot,t)|^2\colon\mathbb{R}^d\to\mathbb{R}_{\ge 0}$ is a probability
density. 

If $\Psi$ is nowhere vanishing on
$\mathbb{R}^d\times[0,T]$, the simple connectivity of $\mathbb{R}^d\times[0,T]$ 
permits the choice of a globally defined phase function
$\theta\colon\mathbb{R}^d\times[0,T]\to\mathbb{R}$ (\textit{cf}.~\cite[Sec.~1]{ReddigerPoirier2023};
see also \cite[p.~865]{vonRenesse2012}) such that the \emph{Madelung transform} is specified by
\begin{align}\label{eq:madelung_transform}
	\Psi\mapsto(\rho,\theta)
	\coloneqq(|\Psi|^2,\arg(\Psi))
	\quad\text{ in }
	\mathbb{R}^d\times[0,T]\,.
\end{align}\newpage 
\noindent Here, the branch of $\arg(\Psi)$, which is unique up to an additive
constant $2\pi k$ with $k\in\mathbb{Z}$, is fixed by the initial phase
condition \eqref{eq:initial_phase}.  
Conversely, the \emph{Madelung reconstruction}  
is specified by
\begin{align}\label{eq:madelung_reconstruction}
	(\rho,\theta)\mapsto\Psi
	\coloneqq\sqrt{\rho}
	\exp(\mathrm{i}\theta)
	\quad\text{ in }\mathbb{R}^d\times[0,T]\,.
\end{align}
More precisely, the changes of variables specified by \eqref{eq:madelung_transform} and \eqref{eq:madelung_reconstruction} are the Madelung transform and Madelung reconstruction, respectively, in the
units $\hbar=1$. 

Assume, in addition, that
$\Psi(x,t)\neq0$ for all
$(x,t)\in\mathbb{R}^d\times[0,T]$. On this zero-free~\mbox{space-time}~cylinder, Sobolev embedding and the Madelung transform
\eqref{eq:madelung_transform} yield the regularity in condition
(\hyperlink{A.1}{A.1}),~while~the nonvanishing assumption gives the
strict positivity in condition~(\hyperlink{A.2}{A.2}) of
Assumption~\ref{ass:classical_solution_setting}
(\textit{cf}.~\cite[p.~865]{vonRenesse2012};
see also \cite{CarlesDanchinSaut2012,ReddigerPoirier2023}).
Uniqueness of the Schrödinger solution, together with the~\mbox{continuous}~phase~lift~fixed~by \eqref{eq:initial_phase}, yields uniqueness in this
positive classical solution class. If
$s>\frac{d}{2}+3$, Sobolev embedding applied to $\Psi$ and
$\partial_t\Psi$, together with the fact that $|\Psi|$ is bounded away
from zero on compact subsets, yields the additional regularity in
condition~(\hyperlink{A.3}{A.3}).

The preceding reconstruction is conditional on the nonvanishing
assumption. Standard Schrödinger well-posedness controls Sobolev norms
but does not exclude zeros and therefore does not, by itself, establish
condition~(\hyperlink{A.2}{A.2})
(\textit{cf}.~\cite[Sec.~2]{CarlesDanchinSaut2012}). Even if $\Psi$
is nowhere vanishing, these Sobolev bounds~do~not,~in~general, imply
the global-in-space Lipschitz and linear-growth bounds on $\mathbf v$
required in condition~(\hyperlink{A.4}{A.4}), since derivatives of the
phase involve quotients by $\Psi$, which may approach zero at spatial
infinity.~Consequently,\linebreak conditions~(\hyperlink{A.2}{A.2}) and
(\hyperlink{A.4}{A.4}) remain additional hypotheses not ensured~by~global~Schrödinger~\mbox{well-posedness}. 
\end{remark}

\subsection{Deterministic transport representation}
\label{subsec:deterministic_transport_representation}

\hspace{5mm}The continuity equation \eqref{eq:model_density} describes the Born probability density $\rho\colon \mathbb{R}^d\times [0,T]\to \mathbb{R}_{>0}$
in Eulerian variables, whereas sampling is performed by transporting
particles; the following characteristic representation connects these
two descriptions.

Under condition~(\hyperlink{A.4}{A.4}) of
Assumption~\ref{ass:classical_solution_setting}, let
$X\colon[0,T]\times\mathbb{R}^d\to\mathbb{R}^d$ denote the unique~global~\emph{characteristic flow} generated by the hydrodynamic velocity field $\mathbf{v}\colon \mathbb{R}^d\times  [0,T]\to\mathbb{R}^d$, defined~by~\eqref{eq:velocity_field},~\textit{i.e.},\vspace{-4.5mm}
\begin{subequations}
\label{eq:characteristic_flow}
\begin{alignat}{2}
	\partial_tX(t;x)
	&=\mathbf{v}(X(t;x),t)
	&&\quad\text{ for all }(t,x)
	\in(0,T)\times\mathbb{R}^d\,,
	\label{eq:characteristic_flow_equation}
	\\
	X(0;x)
	&=x
	&&\quad\text{ for all }x\in\mathbb{R}^d\,.
	\label{eq:characteristic_flow_initial_condition}\\[-6mm]\notag
\end{alignat}
\end{subequations}

To describe the action of the characteristic flow $X\colon[0,T]\times\mathbb{R}^d\to\mathbb{R}^d$ on probability
measures, we use the pushforward operation: for a Borel measurable mapping
$F\colon\mathbb{R}^d\to\mathbb{R}^d$ and  
$\mu\in\mathcal{P}(\mathbb{R}^d)$, the \emph{pushforward} of $\mu$ under $F$
is the measure $F_{\#}\mu\in\mathcal{P}(\mathbb{R}^d)$ defined by
\begin{align}
	(F_{\#}\mu)(A)
	\coloneqq
	\mu(F^{-1}(A))
	\quad\text{ for all }A\in\mathcal{B}(\mathbb{R}^d)\,.
	\label{eq:pushforward_definition}
\end{align}
With this notation, the following standard characteristic-flow
representation makes precise the connection between \hspace{-0.1mm}the \hspace{-0.1mm}Eulerian
\hspace{-0.1mm}evolution \hspace{-0.1mm}of \hspace{-0.1mm}the \hspace{-0.1mm}Born \hspace{-0.1mm}probability \hspace{-0.1mm}density \hspace{-0.1mm}and~\hspace{-0.1mm}the~\hspace{-0.1mm}\mbox{particle-based}~\hspace{-0.1mm}\mbox{sampling}~\hspace{-0.1mm}\mbox{procedure}.

\begin{proposition}[Characteristic-flow representation]
\label{prop:characteristic_flow_representation}
Let Assumption~\ref{ass:classical_solution_setting} be satisfied. Then, there exists a unique global characteristic flow $X\colon[0,T]\times\mathbb{R}^d\to\mathbb{R}^d$ generated by the hydrodynamic velocity field $\mathbf{v}\colon \mathbb{R}^d\times  [0,T]\to\mathbb{R}^d$, defined by \eqref{eq:velocity_field}, \textit{i.e.}, a unique global solution of the characteristic-flow problem \eqref{eq:characteristic_flow}, and, for every $t\in [0,T]$, the map 
$X(t;\cdot)\colon\mathbb{R}^d\to\mathbb{R}^d$ is a
$C^1$-diffeomorphism and
\begin{align}
	\rho(\cdot,t)\,\mathrm{d}x
	=[X(t;\cdot)]_{\#}(\rho_0\,\mathrm{d}x)
	\quad\text{ in }\mathcal{P}(\mathbb{R}^d)\,.
	\label{eq:deterministic_pushforward}
\end{align}
Consequently, if $X_0\colon\Xi\to\mathbb{R}^d$ is an $\mathbb{R}^d$-valued random variable on a probability
space $(\Xi,\mathcal{F},\mathbb{P})$ satisfying
\begin{align}
	[X_0]_{\#}\mathbb{P}
	=\rho_0\,\mathrm{d}x
	\quad\text{ in }\mathcal{P}(\mathbb{R}^d)\,,
	\label{eq:initial_sample_distribution}
\end{align}
then, for every $t\in [0,T]$, the transported random variable
$X(t;X_0) \colon\Xi\to\mathbb{R}^d$ satisfies
\begin{align}
	[X(t;X_0)]_{\#}\mathbb{P}
	=\rho(\cdot,t)\,\mathrm{d}x
	\quad\text{ in }\mathcal{P}(\mathbb{R}^d)\,.
	\label{eq:generated_sample_distribution}
\end{align} 
\end{proposition}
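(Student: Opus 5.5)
The proof has three steps: existence and regularity of the flow, the pushforward identity via the continuity equation, and the statement about random variables.

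\emph{Step 1: existence, uniqueness and diffeomorphism property.} By (A.1), $\theta\in C^0([0,T];C^2_{\mathrm{loc}}(\mathbb{R}^d))$, so $\mathbf{v}=\frac1m\nabla\theta$ is continuous on $\mathbb{R}^d\times[0,T]$ and $C^1$ in $x$, with $\nabla_x\mathbf{v}$ jointly continuous. By (A.4) it is globally Lipschitz in $x$, uniformly in $t$, and of linear growth. The Cauchy--Lipschitz theorem gives local existence and uniqueness of solutions to \eqref{eq:characteristic_flow}. A Gronwall estimate based on \eqref{eq:velocity_linear_growth},
\begin{align*}
|X(t;x)|\le (|x|+C_T t)e^{C_T t},
\end{align*}
excludes blow-up, so solutions are global on $[0,T]$. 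Since $\nabla_x\mathbf{v}$ is continuous, the standard differentiability theorem for ODEs shows that $X(t;\cdot)\in C^1$. Its Jacobian $DX$ solves the variational equation $\partial_t DX=\nabla\mathbf{v}(X,t)\,DX$, and Liouville's formula gives
\begin{align*}
\det DX(t;x)=\exp\Big(\int_0^t \operatorname{div}\mathbf{v}(X(s;x),s)\,\mathrm{d}s\Big)>0 .
\end{align*}
Bijectivity follows by solving the ODE backward from time $t$ to $0$; the backward problem is globally well posed by the same Lipschitz and growth bounds. The inverse is therefore the backward flow, which is again $C^1$, so $X(t;\cdot)$ is a $C^1$-diffeomorphism.

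\emph{Step 2: pushforward identity.} I would work in Lagrangian variables and set $J(t;x):=\det DX(t;x)$. The aim is to show that
\begin{align*}
\rho(X(t;x),t)\,J(t;x)=\rho_0(x)\qquad\text{for all }(t,x)\in[0,T]\times\mathbb{R}^d.
\end{align*}
Formally, differentiate in $t$ using $\partial_tJ=(\operatorname{div}\mathbf{v})(X,t)\,J$ and the chain rule. The time derivative of the left-hand side is then $J\,[\partial_t\rho+\mathbf{v}\cdot\nabla\rho+\rho\operatorname{div}\mathbf{v}](X,t)$, which vanishes by \eqref{eq:model_density}. The main obstacle is regularity. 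The chain rule needs $\rho$ to be $C^1$ jointly in $(x,t)$, and this holds because (A.1) gives $\rho\in C^1([0,T];C^0_{\mathrm{loc}})\cap C^0([0,T];C^2_{\mathrm{loc}})$. The continuity equation needs $\operatorname{div}(\rho\nabla\theta)=\nabla\rho\cdot\nabla\theta+\rho\Delta\theta$, which is pointwise continuous.

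With this identity in hand, take any bounded Borel $\varphi$. The change of variables $y=X(t;x)$ is valid since $X(t;\cdot)$ is a $C^1$-diffeomorphism with positive Jacobian, and it gives
\begin{align*}
\int\varphi(X(t;x))\rho_0(x)\,\mathrm{d}x=\int\varphi(X(t;x))\rho(X(t;x),t)J\,\mathrm{d}x=\int\varphi(y)\rho(y,t)\,\mathrm{d}y .
\end{align*}
This is exactly \eqref{eq:deterministic_pushforward}.

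\emph{Step 3: random variables.} For $t\in[0,T]$, the map $X(t;\cdot)$ is continuous, so $X(t;X_0)$ is measurable. Functoriality of the pushforward, $[F\circ X_0]_\#\mathbb{P}=F_\#([X_0]_\#\mathbb{P})$, combined with \eqref{eq:initial_sample_distribution} and \eqref{eq:deterministic_pushforward}, yields \eqref{eq:generated_sample_distribution}.
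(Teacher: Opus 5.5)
Your proposal is correct. Steps 1 and 3 match the paper's argument: global Picard--Lindel\"of, differentiable dependence together with the backward problem for the diffeomorphism property, and the composition rule for pushforwards.

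The genuine difference is Step 2. The paper never touches Jacobians. It proceeds as follows:
\begin{itemize}
\item It notes that $\mathbf{v}\in C^0([0,T];C^1_{\mathrm{loc}})\subseteq L^1(0,T;W^{1,\infty}_{\mathrm{loc}})$ and that $|\mathbf{v}|/(1+|x|)\in L^1(0,T;L^\infty)$.
\item It shows that $t\mapsto\rho(\cdot,t)\,\mathrm{d}x$ is narrowly continuous, using pointwise convergence, unit mass and Scheff\'e's lemma.
\item It tests the continuity equation to obtain a distributional measure-valued solution.
\item It invokes the Ambrosio--Crippa uniqueness and representation theorem, which identifies any such solution with $[X(t;\cdot)]_{\#}\mu_0$.
\end{itemize}

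Your Lagrangian argument instead proves the pointwise identity $\rho(X(t;x),t)\det\mathrm{D}X(t;x)=\rho_0(x)$ directly, via the Jacobi--Liouville formula and the chain rule, and then changes variables. Your regularity check is right: (A.1) does give joint $C^1$-regularity of $\rho$ and joint continuity of $\Delta\theta$. The change of variables is legitimate because $X(t;\cdot)$ is a $C^1$-diffeomorphism of $\mathbb{R}^d$ onto itself.

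What your route buys:
\begin{itemize}
\item It is elementary and self-contained.
\item It yields the explicit Lagrangian density formula as a by-product.
\item It does not need, for this step, that $\rho(\cdot,t)$ is a probability density for $t>0$, nor narrow continuity. In fact your identity implies both unit mass and strict positivity of $\rho(\cdot,t)$ from those of $\rho_0$.
\end{itemize}

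What the paper's route buys: it uses only the weak form of the continuity equation, so it would survive without pointwise $\partial_t\rho$ and $\nabla\rho$. It also gives uniqueness of the transported curve among all narrowly continuous measure-valued solutions. The price is reliance on an external theorem whose integrability and continuity hypotheses must be verified.
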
 

\begin{proof}
By condition~(\hyperlink{A.1}{A.1}) and the global Lipschitz estimate
\eqref{eq:velocity_global_lipschitz} in condition~(\hyperlink{A.4}{A.4}),  the~\mbox{hydrodynamic} velocity \hspace{-0.1mm}field \hspace{-0.1mm}$\mathbf{v}\colon \hspace{-0.175em}\mathbb{R}^d\times[0,T]\hspace{-0.175em}\to\hspace{-0.175em} \mathbb{R}^d$ \hspace{-0.1mm}is
\hspace{-0.1mm}continuous \hspace{-0.1mm}and \hspace{-0.1mm}in \hspace{-0.1mm}its \hspace{-0.1mm}spatial \hspace{-0.1mm}variable
\hspace{-0.1mm}(uniformly \hspace{-0.1mm}with \hspace{-0.1mm}respect~\hspace{-0.1mm}to~\hspace{-0.1mm}${t\hspace{-0.175em}\in\hspace{-0.175em}[0,T]}$) globally Lipschitz continuous. Consequently, the Picard--Lindelöf theorem in its global form implies~that, for each $x\in\mathbb{R}^d$, the characteristic-flow problem
\eqref{eq:characteristic_flow} admits a unique solution $X(\cdot;x)\colon[0,T]\to\mathbb{R}^d$.~Collectively, these solutions define the unique global characteristic
flow
$X\colon\hspace{-0.175em}[0,T]\times\mathbb{R}^d\hspace{-0.175em}\to\hspace{-0.175em}\mathbb{R}^d$~(\textit{cf}.~\mbox{\cite[Cor.~2.6]{Teschl2012}}).
Differentiable dependence on the initial value and uniqueness of the
corresponding backward problem imply that, for every $t\in[0,T]$, $X(t;\cdot)\colon \mathbb{R}^d\to \mathbb{R}^d$ is a $C^1$-diffeomorphism
(\textit{cf}.~\cite[Secs.~2.4~\&~2.6]{Teschl2012}).

By condition~(\hyperlink{A.1}{A.1}) and
\eqref{eq:velocity_field}, we have that
$\mathbf{v}\in
C^0([0,T];C^1_{\mathrm{loc}}(\mathbb{R}^d;\mathbb{R}^d))\subseteq L^1(0,T;W^{1,\infty}_{\mathrm{loc}}
(\mathbb{R}^d;\mathbb{R}^d))$.
Moreover, by the linear-growth estimate \eqref{eq:velocity_linear_growth} in condition (\hyperlink{A.4}{A.4}), we have that $\smash{\frac{|\mathbf{v}|}{1+|\cdot|}}
\in L^1(0,T;L^\infty(\mathbb{R}^d))$.
By condition~(\hyperlink{A.1}{A.1}) and since, for every $t\in[0,T]$, 
$\rho(\cdot,t)\colon \mathbb{R}^d\to \mathbb{R}_{>0}$  is a probability density, the curve $\mu
	\coloneqq
	(\mu_t\coloneqq \rho(\cdot,t)\,\mathrm{d}x)_{t\in[0,T]}
	\in
	C_{\mathrm n}([0,T];\mathcal{P}(\mathbb{R}^d))$
is well-defined. Here,
$C_{\mathrm n}([0,T];\mathcal{P}(\mathbb{R}^d))$ denotes the space of
narrowly continuous curves $t\mapsto\mu_t$, \textit{i.e.}, 
$(t\mapsto\int_{\mathbb{R}^d}\varphi\,\mathrm{d}\mu_t)\in C^0[0,T]$
for every bounded continuous function
$\varphi\in C_{\mathrm{b}}(\mathbb{R}^d)$
(\textit{cf}.~\cite[Sec.~5.1,~Eq.~(5.1.1)]
{AmbrosioGigliSavare2008}).  Indeed, condition~(\hyperlink{A.1}{A.1}) yields
$\rho(\cdot,\widetilde{t})\to\rho(\cdot,t)$ pointwise   $(\widetilde{t}\to t)$. Since these nonnegative
densities all have unit mass, Scheff\'e's lemma  (\textit{cf}.~\cite[Sec.~5.10, p.~55]{Williams1991}) implies convergence in
$L^1(\mathbb{R}^d)$ and, thus, narrow convergence of the associated
probability measures.\linebreak
Testing the pointwise continuity equation
\eqref{eq:model_density} against
$\varphi\in C_{\mathrm{c}}^\infty(\mathbb{R}^d\times[0,T))$, integrating by parts,~and~using the initial condition
\eqref{eq:initial_density}, we obtain $\int_0^T\int_{\mathbb{R}^d}
	(
	\partial_t\varphi
	+\mathbf{v}\cdot\nabla\varphi
	)
	\,\mathrm{d}\mu_t\,\mathrm{d}t
	+
	\int_{\mathbb{R}^d}
	\varphi(\cdot,0)\,\mathrm{d}\mu_0=0$.~Consequently,\linebreak
$\mu\hspace{-0.15em}\in\hspace{-0.15em} C_{\mathrm n}([0,T];\mathcal{P}(\mathbb{R}^d))$ is a
measure-valued distributional solution of
\eqref{eq:model_density} with initial value~${\mu_0\hspace{-0.15em}=\hspace{-0.15em}\rho_0\,\mathrm{d}x}$~and\linebreak the characteristic-flow representation theorem~yields~\eqref{eq:deterministic_pushforward}
(\textit{cf}.~\cite[Sec.~2, Prop.~2.1~\&~Rems.~1.3~\&~2.4]
{AmbrosioCrippa2014}).

Finally, using \eqref{eq:initial_sample_distribution}, the composition
rule for pushforward measures (\textit{cf}.~\cite[Sec.~5.2, Eq.~(5.2.4)]
{AmbrosioGigliSavare2008}), and
\eqref{eq:deterministic_pushforward}, we obtain $[X(t;X_0)]_{\#}\mathbb{P}=[X(t;\cdot)]_{\#}
    ([X_0]_{\#}\mathbb{P})=[X(t;\cdot)]_{\#}(\rho_0\,\mathrm{d}x)=\rho(\cdot,t)\,\mathrm{d}x$ for all  $t\in [0,T]$.
\end{proof}

Given Assumption \ref{ass:classical_solution_setting}, Proposition~\ref{prop:characteristic_flow_representation} provides the
following phase-controlled generative procedure for the free Madelung
dynamics (\textit{cf}.\ Subsection \ref{subsec:free_madelung_system}):

\begin{enumerate}[
	noitemsep,
	topsep=2pt,
	leftmargin=!,
	labelwidth=\widthof{5.},
	font=\itshape
]
\item \emph{Reference and target measures.}\hypertarget{Step 1}{}
We choose a reference Born probability density
$\rho_0\colon\hspace{-0.1em}\mathbb{R}^d\hspace{-0.1em}\to\hspace{-0.1em}\mathbb{R}_{> 0}$~such~that samples from the
corresponding probability measure $\mu_0\coloneqq\rho_0\,\mathrm{d}x\in \mathcal{P}(\mathbb{R}^d)$ 
can be generated efficiently, and prescribe a target probability
measure $\mu_\star\in\mathcal{P}(\mathbb{R}^d)$.

\item \emph{Phase-controlled forward evolution.}\hypertarget{Step 2}{}
For a candidate initial phase function $\theta_0\colon\mathbb{R}^d\to\mathbb{R}$ for which
Assumption~\ref{ass:classical_solution_setting} holds, 
let $(\rho^{\theta_0},\theta^{\theta_0})\colon \mathbb{R}^d\times [0,T]\to \mathbb{R}_{>0}\times \mathbb{R}$ denote the corresponding unique solution
of the free Madelung problem \eqref{eq:model}--\eqref{eq:initial_conditions} and 
let ${\mathbf{v}^{\theta_0}\colon \mathbb{R}^d\times [0,T]\to \mathbb{R}^d}$,~defined~by~\eqref{eq:velocity_field}, be the corresponding hydrodynamic velocity field. 

\item \emph{Characteristic transport.}\hypertarget{Step 3}{} Due to Assumption \ref{ass:classical_solution_setting}, 
Proposition~\ref{prop:characteristic_flow_representation} yields the existence 
of~the~characteristic flow $X^{\theta_0}\colon[0,T]\times\mathbb{R}^d\to\mathbb{R}^d$ solving the characteristic-flow problem \eqref{eq:characteristic_flow} generated~by~$\mathbf{v}^{\theta_0}\colon \mathbb{R}^d\times[0,T]\to\mathbb{R}^d$. Then, the family $X^{\theta_0}(t;\cdot)\colon \mathbb{R}^d\to\mathbb{R}^d$, $t\in [0,T]$,  is the
\emph{characteristic transport flow}, and
Proposition~\ref{prop:characteristic_flow_representation} gives the
induced curve of transported probability measures $\mu^{\theta_0}\in C_{\mathrm n}([0,T];\mathcal{P}(\mathbb{R}^d))$, defined by $\mu^{\theta_0}_t
	\coloneqq \rho^{\theta_0}(\cdot,t)\,\mathrm{d}x
	=
	[X^{\theta_0}(t;\cdot)]_{\#}\mu_0$ in $\mathcal{P}(\mathbb{R}^d)$ for all $t\in [0,T]$.

\item \emph{Phase identification.}\hypertarget{Step 4}{}
The terminal-time characteristic map $X^{\theta_0}(T;\cdot)\colon\hspace{-0.15em} \mathbb{R}^d\hspace{-0.15em}\to\hspace{-0.15em} \mathbb{R}^d$
is the \emph{\mbox{generative}~transport map} associated with the initial phase function $\theta_0\colon \mathbb{R}^d\to \mathbb{R}$.
The \emph{exact phase-control problem} consists of finding an initial
phase function $\theta_0^\star\colon\mathbb{R}^d\to\mathbb{R}$ satisfying the \emph{terminal-time~matching~\mbox{condition}}
\begin{alignat}{2}
    \smash{\mu^{\theta_0^\star}_T=\rho^{\theta_0^\star}(\cdot,T)\,\mathrm{d}x=
	[X^{\theta_0^\star}(T;\cdot)]_{\#}\mu_0
	=\mu_\star
	\quad\text{in }\mathcal{P}(\mathbb{R}^d)\,.}
	\label{eq:phase_control_problem}
\end{alignat}

\item \emph{Deterministic generation.}
After determining the initial
phase function $\theta_0^\star\colon\mathbb{R}^d\to\mathbb{R}$, we draw an initial random variable
$X_0\colon\Xi\to\mathbb{R}^d$ on a probability space $(\Xi,\mathcal{F},\mathbb{P})$ satisfying
${[X_0]_{\#}\mathbb{P}\hspace{-0.1em}=\hspace{-0.1em}\mu_0\hspace{-0.1em}=\hspace{-0.1em}\rho_0\,\mathrm{d}x}$~in~$\mathcal{P}(\mathbb{R}^d)$ and
define the \emph{generated random variable}
$X_T\coloneqq X^{\theta_0^\star}(T;X_0)\colon \Xi\to \mathbb{R}^d$. 
Then, due to Assumption~\ref{ass:classical_solution_setting}, 
Proposition~\ref{prop:characteristic_flow_representation} and
\eqref{eq:phase_control_problem} imply that $X_T\sim\mu_\star$ on $(\Xi,\mathcal F,\mathbb{P})$ since
\begin{align}\label{eq:phase_control_problem.2}
	\smash{[X_T]_{\#}\mathbb{P}= [X^{\theta_0^\star}(T;X_0)]_{\#}\mathbb{P}=
    \mu^{\theta_0^\star}_T=\mu_\star
	\quad\text{ in }\mathcal{P}(\mathbb{R}^d)\,.}
\end{align} 
\end{enumerate}

Therefore, randomness enters only through the initial sample $X_0\colon \Xi\to \mathbb{R}^d$,
whereas,~for~every~$\xi\in \Xi$, the trajectory 
$(t\hspace{-0.1em}\mapsto\hspace{-0.1em} X^{\theta_0^\star}(t;X_0(\xi)))\colon\hspace{-0.1em} [0,T]\hspace{-0.1em}\to \hspace{-0.1em}\mathbb{R}^d$ is deterministic. Neither the~\mbox{characteristic}~\mbox{transport}~flow 
nor its
terminal generative map is prescribed independently: both are generated by the hydrodynamic velocity field, whose evolution is coupled to the Born probability density  through the free
Madelung problem \eqref{eq:model}--\eqref{eq:initial_conditions}. The Hamiltonian structure underlying this coupling is described~in~the~next~\mbox{subsection}.
 
\begin{figure}[H]
\centering
\begingroup

\setlength{\fboxsep}{1.3pt}
\setlength{\fboxrule}{0.45pt}

\newcommand{\qhcardcontent}[4]{%
	\parbox[c][25mm][t]{2.575cm}{%
		\setlength{\parskip}{0pt}%
		\centering\scriptsize

		\vspace*{0.7mm}%
		\colorbox{#1}{%
			\makebox[4.2mm][c]{%
				\color{white}\sffamily\bfseries #2}}%
		\par
		\vspace{0.45mm}%

		\parbox[c][6.2mm][c]{2.30cm}{%
			\centering
			{\color{#1}\sffamily\bfseries #3}%
		}%
		\par
		\vspace{0.25mm}%

		{\color{#1!55}\rule{1.72cm}{0.35pt}}%
		\par

		\vfill
		\parbox[c]{2.30cm}{%
			\centering #4%
		}%
		\par
		\vfill
	}%
}

\newcommand{\qhoneline}[1]{%
	\makebox[0pt][c]{$#1$}%
}

\begin{tikzpicture}[
	font=\scriptsize,
	card/.style={
		line width=0.55pt,
		rounded corners=0.8mm,
		inner sep=\dimexpr
			\fboxsep+\fboxrule-0.275pt
		\relax,
		outer sep=0pt
	},
	forward/.style={
		-{Latex[length=1.6mm]},
		draw=qhblue!80!black,
		line width=0.6pt,
		line cap=round
	}
]

\node[
	card,
	draw=qhnavy!85!black,
	fill=qhnavy!8
] (step1) at (0,0) {%
	\qhcardcontent
		{qhnavy!85!black}
		{1}
		{Reference and target}
		{%
			$\mu_0\coloneqq\rho_0\,\mathrm{d}x$,\par
			$\mu_\star\in\mathcal{P}(\mathbb{R}^d)$
		}%
};

\node[
	card,
	draw=qhblue!85!black,
	fill=qhblue!12
] (step2) at (32.18mm,0) {%
	\qhcardcontent
		{qhblue!85!black}
		{2}
		{Forward evolution}
		{%
			\qhoneline{%
				\theta_0\mapsto
				(\rho^{\theta_0},
				\theta^{\theta_0},
				\mathbf{v}^{\theta_0})%
			}%
		}%
};

\node[
	card,
	draw=qhblue!85!black,
	fill=qhblue!12
] (step3) at (64.36mm,0) {%
	\qhcardcontent
		{qhblue!85!black}
		{3}
		{Characteristic transport}
		{%
			\qhoneline{%
				\mu_t^{\theta_0}
				=
				[X^{\theta_0}(t;\cdot)]_{\#}\mu_0
			}%
		}%
};

\node[
	card,
	draw=qhgold!90!black,
	fill=qhgold!10
] (step4) at (96.54mm,0) {%
	\qhcardcontent
		{qhgold!90!black}
		{4}
		{Terminal-time matching}
		{%
			$\text{find }\theta_0^\star
			\mathpunct{\mathchar"603A}
			\mu_T^{\theta_0^\star}=\mu_\star$
		}%
};

\node[
	card,
	draw=qhteal!85!black,
	fill=qhteal!10
] (step5) at (128.72mm,0) {%
	\qhcardcontent
		{qhteal!85!black}
		{5}
		{Deterministic generation}
		{%
			$X_0\sim\mu_0$,\par
			\qhoneline{%
				X_T\coloneqq
				X^{\theta_0^\star}(T;X_0),
			}%
			\par
			$X_T\sim\mu_\star$
		}%
};

\draw[forward]
	(step1.east) -- (step2.west);

\draw[forward]
	(step2.east) -- (step3.west);

\draw[forward]
	(step3.east) -- (step4.west);

\draw[forward]
	(step4.east) -- (step5.west);

\node[
	font=\sffamily\scriptsize\bfseries,
	text=qhblue!90!black
] at (48.27mm,20.4mm)
	{PHASE IDENTIFICATION};

\draw[
	draw=qhblue!42,
	line width=0.45pt
]
	(-10.23mm,15.5mm)
	--
	(106.77mm,15.5mm);

\node[
	font=\sffamily\scriptsize\bfseries,
	text=qhteal!90!black
] at (128.72mm,20.4mm)
	{GENERATION};

\draw[
	draw=qhteal!42,
	line width=0.45pt
]
	(115.82mm,15.5mm)
	--
	(141.62mm,15.5mm);

\end{tikzpicture}

\endgroup
\caption{Phase-controlled generative procedure.
Steps~\protect\hyperlink{Step 1}{1}--\protect\hyperlink{Step 4}{4}
seek an initial phase function
$\theta_0^\star\colon \mathbb{R}^d\to\mathbb{R}$
satisfying the terminal-time matching condition
\eqref{eq:phase_control_problem}; then, once such a phase function
has~been~found, Step~\protect\hyperlink{Step 5}{5} maps an initial
sample $X_0\sim\mu_0$ deterministically to a (terminal-time)
generated sample $X_T\sim\mu_\star$ via the characteristic-flow
representation \eqref{eq:deterministic_pushforward} in
Proposition~\ref{prop:characteristic_flow_representation}.}
\label{fig:phase_controlled_generative_procedure}
\end{figure}

\subsection{Hamiltonian and Fisher-information structure}
\label{subsec:hamiltonian_structure}

\hspace{5mm}The characteristic-flow representation
\eqref{eq:deterministic_pushforward} and the resulting sampling
identity \eqref{eq:generated_sample_distribution} follow from the
continuity equation \eqref{eq:model_density} together with conditions
(\hyperlink{A.1}{A.1}) and (\hyperlink{A.4}{A.4}) of
Assumption~\ref{ass:classical_solution_setting},~which~ensure~the
existence and uniqueness of the global characteristic flow. In fact,
the same identities hold for any sufficiently regular velocity field
in a continuity equation. Therefore, they justify deterministic
sampling but do not encode the phase-density coupling through the
Bohm quantum potential that distinguishes the free Madelung system
\eqref{eq:model}. 

This structure enters through the coupled determination of the
hydrodynamic velocity field $\mathbf{v}\colon \mathbb{R}^d\times [0,T]\to \mathbb{R}^d$ and the phase function $\theta \colon \mathbb{R}^d\times [0,T]\to \mathbb{R}$: definition \eqref{eq:velocity_field}
constrains the
hydrodynamic velocity field to a potential field, while the  quantum Hamilton--Jacobi
equation 
\eqref{eq:model_phase} couples the phase function to the Born probability density
$\rho\colon \mathbb{R}^d\times [0,T]\to \mathbb{R}_{>0}$ through the Bohm~quantum~potential
$Q_{\mathrm B}(\rho)\colon \mathbb{R}^d\times [0,T]\to \mathbb{R}$, defined by \eqref{eq:bohm_quantum_potential}. The Hamiltonian formulation below makes this
coupling precise by identifying the Bohm quantum potential as the variational
derivative, with respect to the density, of the Fisher-information
contribution to the Hamiltonian.

First, for a sufficiently regular Born probability density
$\rho\colon\mathbb{R}^d\to\mathbb{R}_{>0}$ satisfying
$\nabla\sqrt{\rho}\in L^2(\mathbb{R}^d;\mathbb{R}^d)$, its \emph{Fisher
information} (see, \textit{e.g.}, \cite{GianazzaSavareToscani2009,vonRenesse2012}) is defined by 
\begin{align}
	\smash{\mathcal{I}(\rho)
	\coloneqq
    \|\sqrt{\rho}\nabla\log\rho\|_{L^2(\mathbb{R}^d)}^2 
	=\smash{\bigl\|\tfrac{1}{\sqrt{\rho}}\nabla\rho\bigr\|_{L^2(\mathbb{R}^d)}^2} 
	=4\|\nabla\sqrt{\rho}\|_{L^2(\mathbb{R}^d)}^2\,.} 
	\label{eq:fisher_information}
\end{align}

Then, for a Born probability density
$\rho\colon\hspace{-0.15em}\smash{\mathbb{R}^d}\hspace{-0.15em}\to\hspace{-0.15em}\mathbb{R}_{>0}$ as above and for a sufficiently regular~phase~\mbox{function}
$\theta\colon\mathbb{R}^d\to\mathbb{R}$ satisfying
$\sqrt{\rho}\nabla\theta\in L^2(\mathbb{R}^d;\mathbb{R}^d)$, 
we define the \emph{Hamiltonian} (see, \textit{e.g.},  \cite{vonRenesse2012,KhesinMisiolekModin2019}) of the free Madelung system \eqref{eq:model} by
\begin{align}
	\smash{\mathcal{H}(\rho,\theta)
	\coloneqq
	\tfrac{1}{2m}\|\sqrt{\rho}\nabla\theta\|_{L^2(\mathbb{R}^d)}^2	
	+
	\tfrac{1}{8m}\mathcal{I}(\rho)
	=
	\tfrac{1}{2m}\bigl(\|\sqrt{\rho}\nabla\theta\|_{L^2(\mathbb{R}^d)}^2+\|\nabla\sqrt{\rho}\|_{L^2(\mathbb{R}^d)}^2\bigr)\,.}
	\label{eq:madelung_hamiltonian}
\end{align}
By definition \eqref{eq:velocity_field}, the first term in
\eqref{eq:madelung_hamiltonian} can be written as
$\smash{\frac{m}{2}\|\sqrt{\rho}\mathbf{v}\|^2_{L^2(\mathbb{R}^d)}}$~and,~thus,~represents the \emph{hydrodynamic kinetic energy}, while the second term
is the \emph{quantum kinetic contribution}~and~equals~$\smash{\frac{1}{8m}\mathcal{I}(\rho)}$. Under the
Madelung reconstruction \eqref{eq:madelung_reconstruction}, equivalently, one  has that
\begin{align}
	\smash{\mathcal{H}(\rho,\theta)
	=\tfrac{1}{2m}\|\nabla\Psi\|_{L^2(\mathbb{R}^d)}^2\,.}
	\label{eq:hamiltonian_schroedinger_energy}
\end{align}
Note that equation~\eqref{eq:hamiltonian_schroedinger_energy} is the \emph{free
Schrödinger energy} expressed in Madelung variables
(\textit{cf}.~\cite{vonRenesse2012,KhesinMisiolekModin2019}).~Its
conservation under the free Schrödinger evolution is standard
(\textit{cf}.~\cite[Chap.~2]{Cazenave2003};~see~also~\cite[Chap.~2]{Tao2006}).

In order to compute the first variations, we regard
\eqref{eq:madelung_hamiltonian} on the cone of smooth positive
densities,~not necessarily restricted to unit mass. For
every $\eta,\xi\hspace{-0.15em}\in\hspace{-0.15em} C_{\mathrm{c}}^\infty(\mathbb{R}^d)$, differentiation and~\mbox{integration~by~parts}~yield\vspace{-4.5mm}
\begin{subequations}
\label{eq:hamiltonian_first_variations}
\begin{align} 
	\tfrac{\mathrm{D}}{\mathrm{D}\varepsilon}
	\mathcal{H}(\rho+\varepsilon\eta,\theta)
	|_{\varepsilon=0}
	&=(\tfrac{1}{2m}|\nabla\theta|^2
		+
		Q_{\mathrm{B}}(\rho),\eta)_{L^2(\mathbb{R}^d)}\,,
	\label{eq:hamiltonian_density_variation}
	\\ 
	\tfrac{\mathrm{D}}{\mathrm{D}\varepsilon}
	\mathcal{H}(\rho,\theta+\varepsilon\xi)
    |_{\varepsilon=0}
	&=
	-\tfrac{1}{m}
	(\nabla\cdot(\rho\nabla\theta),\xi)_{L^2(\mathbb{R}^d)}\,.
	\label{eq:hamiltonian_phase_variation}
\end{align}
\end{subequations}
Consequently, the free Madelung system \eqref{eq:model} assumes the
canonical Hamiltonian form\vspace{-0.5mm}
\begin{subequations}
\label{eq:canonical_hamiltonian_system}
\begin{alignat}{2}
	\partial_t\rho
	&=
	\smash{\tfrac{\delta\mathcal{H}}{\delta\theta}}
	&&\quad\text{ in }\mathbb{R}^d\times(0,T)\,,
	\label{eq:canonical_density_equation}
	\\
	\partial_t\theta
	&=
	-\smash{\tfrac{\delta\mathcal{H}}{\delta\rho}}
	&&\quad\text{ in }\mathbb{R}^d\times(0,T)\,.
	\label{eq:canonical_phase_equation}\\[-6mm]\notag
\end{alignat}
\end{subequations}
Assuming sufficient temporal regularity, global integrability, and
decay at infinity to justify the preceding variational identities and
the Hamiltonian chain rule, the canonical Hamiltonian structure
\eqref{eq:canonical_hamiltonian_system} yields\vspace{-0.5mm}
\begin{alignat}{2}
	\tfrac{\mathrm{D}}{\mathrm{D}t}
	\mathcal{H}(\rho(\cdot,t),\theta(\cdot,t))
    =
	(\tfrac{\delta\mathcal{H}}{\delta\rho},\partial_t\rho)_{L^2(\mathbb{R}^d)}
	+
	(\tfrac{\delta\mathcal{H}}{\delta\theta},\partial_t\theta)_{L^2(\mathbb{R}^d)}
	=0\quad \text{ for all }t\in (0,T) \,.
	\label{eq:hamiltonian_conservation_rate}\\[-6mm]\notag
\end{alignat}
Integrating \eqref{eq:hamiltonian_conservation_rate} with respect to $t\in (0,T)$ yields\vspace{-0.5mm}
\begin{align}
	\mathcal{H}(\rho(\cdot,t),\theta(\cdot,t))
	=
	\mathcal{H}(\rho_0,\theta_0)
	\quad\text{ for all }t\in[0,T]\,.
	\label{eq:hamiltonian_conservation}\\[-6mm]\notag
\end{align} 

\hspace{-0.5mm}Since \hspace{-0.1mm}the \hspace{-0.1mm}Hamiltonian \hspace{-0.1mm}\eqref{eq:madelung_hamiltonian}, \hspace{-0.1mm}the \hspace{-0.1mm}hydrodynamic \hspace{-0.1mm}velocity
\hspace{-0.1mm}field \hspace{-0.1mm}\eqref{eq:velocity_field}, \hspace{-0.1mm}and \hspace{-0.1mm}the \hspace{-0.1mm}free \hspace{-0.1mm}Madelung~\hspace{-0.1mm}\mbox{system}~\hspace{-0.1mm}\eqref{eq:model} depend on 
$\theta$ only through its derivatives, if
$(\rho,\theta)$ is a solution,  so is $(\rho,\theta+C)$~for~all~${C\in\mathbb{R}}$.~Consequently, replacing $\theta_0$ by
$\theta_0+C$ leaves the hydrodynamic velocity field \eqref{eq:velocity_field}, the characteristic flow~\eqref{eq:characteristic_flow},~and the induced
probability-measure transport
\eqref{eq:deterministic_pushforward} unchanged. The phase-control
variable is, thus, naturally defined modulo additive constants,
and a normalization may~be~\mbox{imposed}~to~\mbox{select}~a~unique~\mbox{representative}.

Choosing the initial phase function $\theta_0\colon\hspace{-0.1em} \mathbb{R}^d\hspace{-0.1em}\to\hspace{-0.1em} \mathbb{R}$ fixes the initial velocity field
${\mathbf{v}(\cdot,0)\hspace{-0.1em}=\hspace{-0.1em}\frac{1}{m}\nabla\theta_0\colon \hspace{-0.1em}\mathbb{R}^d\hspace{-0.1em}\to \hspace{-0.1em}\mathbb{R}^d}$, whereas the subsequent
velocity field and characteristic flow are determined by the coupled
Hamiltonian evolution rather than prescribed independently. Therefore, the
admissible generative transports are constrained by the Hamiltonian free Madelung dynamics, including
the Fisher-information contribution. The
conservation law \eqref{eq:hamiltonian_conservation} reflects the
conservative rather than dissipative character of the evolution,
while its dispersive character is inherited from the 
free
Schrödinger~dynamics~(\textit{cf}.~\mbox{\cite[Chap.~2]{Tao2006}};~see~also~\cite{CarlesDanchinSaut2012}).\vspace{-1mm}\enlargethispage{2.5mm}

\subsection{PDE-constrained phase identification}
\label{subsec:pde_constrained_phase_identification} 

\hspace{4mm}The \hspace{-0.1mm}exact \hspace{-0.1mm}phase-control \hspace{-0.1mm}problem
\hspace{-0.1mm}\eqref{eq:phase_control_problem} \hspace{-0.1mm}need \hspace{-0.1mm}not \hspace{-0.1mm}be \hspace{-0.1mm}solvable \hspace{-0.1mm}for \hspace{-0.1mm}an \hspace{-0.1mm}arbitrary
\hspace{-0.1mm}target~\hspace{-0.1mm}\mbox{probability}~\hspace{-0.1mm}\mbox{measure} $\mu_{\star}\in \mathcal{P}(\mathbb{R}^d)$. This motivates a terminal-time
phase-identification problem in which the mismatch between the
terminal-time transported measure $\mu^{\theta_0}_T\in \mathcal{P}(\mathbb{R}^d)$ and the target measure~${\mu_{\star}\in \mathcal{P}(\mathbb{R}^d)}$~is~\mbox{minimized}. Let
$\Theta_{\mathrm{ad}}$ be a prescribed class of normalized initial
phase functions for which
Assumption~\ref{ass:classical_solution_setting} is satisfied. The
normalization removes the additive-constant ambiguity identified in
Subsection~\ref{subsec:hamiltonian_structure}. Moreover, let $\mathfrak{D}^2
	\colon
	\mathcal{P}(\mathbb{R}^d)\times
	\mathcal{P}(\mathbb{R}^d)
	\to[0,+\infty]$ be a prescribed \emph{discrepancy functional} satisfying, for all $\mu,\nu \in \mathcal{P}(\mathbb{R}^d)$,\vspace{-0.5mm}
\begin{align}
	\mathfrak{D}^2(\mu,\nu)=0
	\qquad\Longleftrightarrow\qquad
	\mu=\nu\quad \text{ in }\mathcal{P}(\mathbb{R}^d)\,,
	\label{eq:probability_measure_discrepancy}\\[-6mm]\notag
\end{align} 
and let the \emph{terminal-time control-to-state map} $\mathcal{S}_T\colon \Theta_{\mathrm{ad}}\to \mathcal{P}(\mathbb{R}^d)$,  for every $\theta_0\in\Theta_{\mathrm{ad}}$, be defined by\vspace{-0.5mm}
\begin{align}
	\mathcal{S}_T(\theta_0)
	\coloneqq
	\mu_T^{\theta_0}
	=
	\rho^{\theta_0}(\cdot,T)\,\mathrm{d}x
	=
	[X^{\theta_0}(T;\cdot)]_{\#}\mu_0
    \quad\text{ in }\mathcal{P}(\mathbb{R}^d)\,.
	\label{eq:terminal_phase_to_state_map}\\[-6mm]\notag
\end{align}
Then, the corresponding \emph{reduced PDE-constrained phase identification problem} consists of minimizing the \emph{reduced cost functional} $\mathcal{J}\colon \Theta_{\mathrm{ad}}\to [0,+\infty]$, defined for every  $\theta_0\in \Theta_{\mathrm{ad}}$ by\vspace{-0.5mm}
\begin{align}\label{eq:infinite_dimensional_phase_identification}
    \mathcal{J}(\theta_0)
	&\coloneqq
	\mathfrak{D}^2(
		\mathcal{S}_T(\theta_0),\mu_\star
	)\,.\\[-6mm]\notag
\end{align} 
Here, $\theta_0\in \Theta_{\mathrm{ad}}$ is the control variable, $(\rho^{\theta_0},
\theta^{\theta_0})\colon \mathbb{R}^d\times [0,T]\to \mathbb{R}_{>0}\times\mathbb{R}$ is the state variable, and the reduced cost functional \eqref{eq:infinite_dimensional_phase_identification}  depends
 only on the terminal probability measure
$\rho^{\theta_0}(\cdot,T)\,\mathrm{d}x\in \mathcal{P}(\mathbb{R}^d)$, and not on the terminal phase function $\theta^{\theta_0}(\cdot,T)\colon\mathbb{R}^d\to\mathbb{R}$. In particular,~by~equivalence~\eqref{eq:probability_measure_discrepancy}, an admissible initial phase function $\theta_0\in \Theta_{\mathrm{ad}}$ solves the exact
phase-control problem \eqref{eq:phase_control_problem}~if~and~only~if~$\mathcal{J}(\theta_0)=0$.

For computational purposes, let
$\mathcal A\subseteq \mathbb{R}^p$, $p\in \mathbb{N}$, be a parameter set and let
$\{\theta_0^\alpha\}_{\alpha\in\mathcal A}\subseteq\Theta_{\mathrm{ad}}$ be a finite-dimensional family of
normalized initial phase functions. Then, the \emph{reduced parameter-identification problem} consists of minimizing the \emph{finite-dimensional cost functional} $\mathcal{J}_{\mathcal{A}}\colon \mathcal{A}\to [0,+\infty]$, defined for every $\alpha\in \mathcal{A}$ by\vspace{-0.5mm}
\begin{align}
    J_{\mathcal{A}}(\alpha)\coloneqq \mathcal{J}(\theta_0^{\alpha})\,. 
	\label{eq:finite_dimensional_phase_identification}\\[-6mm]\notag
\end{align}
No general existence result for minimizers of
\eqref{eq:infinite_dimensional_phase_identification} or differentiability
result for the terminal-time control-to-state map
\eqref{eq:terminal_phase_to_state_map} is asserted here. For the
Gaussian reference density and quadratic phase family considered next, however, this map is explicit, permitting an analysis of intrinsic quantum spreading and an exact characterization of isotropic Gaussian reachability.\newpage

\section{Gaussian Wave-Packet Dynamics and
Phase-Controlled Reachability}
\label{sec:exact_gaussian}

\hspace{5mm}We specialize the free Madelung problem
\eqref{eq:model}--\eqref{eq:initial_conditions} to isotropic Gaussian
reference densities and quadratic initial phase functions. In this class, the
PDE dynamics reduce~to~\mbox{finite-dimensional} evolution equations for the
mean, width, and phase coefficients, and the terminal-time control-to-state map \eqref{eq:terminal_phase_to_state_map} can be evaluated explicitly.
First, we derive the resulting Gaussian dynamics and intrinsic quantum
spreading in arbitrary dimension and, subsequently, characterize exact terminal-time reachability within this
isotropic Gaussian class and obtain the
corresponding explicit sampling map.

\subsection{Isotropic Gaussian wave packets and intrinsic quantum spreading}
\label{subsec:gaussian_wave_packet_dynamics}

\hspace{5mm}The free propagation of 
Gaussian wave packets and the
effect of a quadratic initial phase function on their width are classical
(\textit{cf}.~\cite{RobinettDoncheskiBassett2005,Beach2009}).
We formulate these dynamics in Madelung variables and explicitly solve
the corresponding characteristic-flow problem
\eqref{eq:characteristic_flow}, thereby making the dependence on the
initial phase function explicit for the reachability analysis below. For an
optimal-transport~interpretation of a particular freely propagating
Gaussian state, see also \cite{Lessel2025}.

\begin{proposition}[Isotropic Gaussian wave-packet dynamics]
\label{prop:isotropic_gaussian_dynamics}
Let $\sigma_0>0$, $a_0,c_0\in\mathbb{R}$, 
$\boldsymbol{\mu}_0,\mathbf{u}_0\in\mathbb{R}^d$,
and let the isotropic
Gaussian initial probability density $\rho_0\colon \mathbb{R}^d\to \mathbb{R}_{>0}$ and the quadratic initial phase function $\theta_0\colon \mathbb{R}^d\to \mathbb{R}$, for every $x\in \mathbb{R}^d$, respectively, be defined by\vspace{-0.5mm}
\begin{subequations}\label{eq:gaussian_initial} 
\begin{alignat}{2}
	\rho_0(x)
	&\coloneqq
	\tfrac{1}{(2\pi \sigma_0^2)^{d/2}}
	\exp(
		-\tfrac{|x-\boldsymbol{\mu}_0|^2}{2\sigma_0^2}
	)\,,\label{eq:gaussian_initial_density}\\
    \theta_0(x)
	&\coloneqq
	\tfrac{m a_0}{2\sigma_0}|x-\boldsymbol{\mu}_0|^2
	+
	m\mathbf{u}_0\cdot(x-\boldsymbol{\mu}_0)
	+c_0\,.
	\label{eq:gaussian_initial_phase}
\end{alignat}
\end{subequations}
Moreover, let the \emph{center function} $\boldsymbol{\mu}\colon [0,T]\to \mathbb{R}^d$, the \emph{width function} $\sigma\colon [0,T]\to \mathbb{R}_{>0}$, and the \emph{phase offset function} $\gamma\colon [0,T]\to \mathbb{R}$, for every $t\in [0,T]$, respectively, be defined by\vspace{-0.5mm}
\begin{subequations}
\label{eq:gaussian_center_and_width}
\begin{align}
	\boldsymbol{\mu}(t)
	&\coloneqq
	\boldsymbol{\mu}_0+t\mathbf{u}_0\,,
	\label{eq:gaussian_center}
	\\[-0.5mm]
	\sigma(t)
	&\coloneqq
	\bigl(
		(\sigma_0+a_0t)^2
		+\tfrac{t^2}{4m^2\sigma_0^2}\bigr)^{1/2}\,,\label{eq:gaussian_width}\\[-0.5mm]
    \gamma(t)
	&\coloneqq
	c_0+\tfrac{m}{2}|\mathbf{u}_0|^2t
	-\tfrac{d}{4m}\textstyle \int_0^t\tfrac{\mathrm{D}\tau}{\sigma(\tau)^2}\,.\label{eq:gaussian_scalar_phase} 
\end{align}
\end{subequations} 
Then, the isotropic Gaussian probability density $\rho\colon \mathbb{R}^d\times [0,T]\to \mathbb{R}_{>0}$ and the phase function $\theta\colon \mathbb{R}^d\times [0,T]\to \mathbb{R}$, for every $(x,t)\in \mathbb{R}^d\times [0,T]$, respectively, defined by\vspace{-0.5mm}
\begin{subequations}
\label{eq:gaussian_madelung_solution}
\begin{align}
	\rho(x,t)
	&\coloneqq
	\tfrac{1}{(2\pi \sigma(t)^2)^{d/2}}
	\exp(
		-\tfrac{|x-\boldsymbol{\mu}(t)|^2}{2\sigma(t)^2}
	)\,,
	\label{eq:gaussian_density}
	\\
	\theta(x,t)
	&\coloneqq
	\smash{\tfrac{m\dot{\sigma}(t)}{2\sigma(t)}}
	|x-\boldsymbol{\mu}(t)|^2
	+
	m\mathbf{u}_0\cdot(x-\boldsymbol{\mu}(t))
	+\gamma(t)\,,
	\label{eq:gaussian_phase}
\end{align}
\end{subequations}
constitute a positive classical solution of the free Madelung
problem
\eqref{eq:model}--\eqref{eq:initial_conditions}. Moreover, 
the associated hydrodynamic velocity field $\mathbf{v}\colon \mathbb{R}^d\times [0,T]\to \mathbb{R}^d$, for every $(x,t)\in \mathbb{R}^d\times [0,T]$, is given via
\begin{align}
	\mathbf{v}(x,t)
	=
	\mathbf{u}_0
	+\smash{\tfrac{\dot{\sigma}(t)}{\sigma(t)}}
	(x-\boldsymbol{\mu}(t))\,,
	\label{eq:gaussian_velocity_field}
\end{align}
and, consequently, the corresponding characteristic flow $X\colon [0,T]\times \mathbb{R}^d\to \mathbb{R}^d$, for every $(t,x)\in [0,T]\times \mathbb{R}^d$, is given via\vspace{-0.5mm} 
\begin{alignat}{2}
	X(t;x)=
	\boldsymbol{\mu}(t)
	+\smash{\tfrac{\sigma(t)}{\sigma_0}}(x-\boldsymbol{\mu}_0)\,,
	\label{eq:gaussian_characteristic_flow}\\[-6mm]\notag
\end{alignat}
and satisfies
\begin{align}
	[X(t;\cdot)]_{\#}(\rho_0\,\mathrm{d}x)
	&=
	\rho(\cdot,t)\,\mathrm{d}x
	\quad\text{ in }\mathcal{P}(\mathbb{R}^d)
	\quad\text{ for all }t\in[0,T]\,.
	\label{eq:gaussian_pushforward}
\end{align}
In particular, if
$X_0\sim\mathcal{N}(\boldsymbol{\mu}_0,\sigma_0^2I_d)$, then
\begin{align}
	X(t;X_0)
	\sim
	\mathcal{N}(\boldsymbol{\mu}(t),\sigma(t)^2I_d)
	\quad\text{ for all }t\in[0,T]\,,
	\label{eq:gaussian_sample_distribution}
\end{align}
where $\mathcal{N}(\boldsymbol{\mu},\sigma^2 I_d)$ denotes the $d$-dimensional isotropic 
Gaussian distribution with mean $\boldsymbol{\mu}\in\mathbb{R}^d$ and covariance 
matrix $\sigma^2 I_d$ with $\sigma>0$.
\end{proposition}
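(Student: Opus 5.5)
The proof is a direct verification by substitution. Write $\mathbf{y}\coloneqq x-\boldsymbol{\mu}(t)$ throughout. The argument rests on one scalar identity for the width function, which I would establish first. Since $\sigma(t)^2=(\sigma_0+a_0t)^2+\tfrac{t^2}{4m^2\sigma_0^2}$ is a quadratic polynomial in $t$ with leading coefficient $A\coloneqq a_0^2+\tfrac{1}{4m^2\sigma_0^2}$, we get $(\sigma\dot\sigma)^{\boldsymbol{\cdot}}=\dot\sigma^2+\sigma\ddot\sigma=A$. The quantity $\sigma^2A-(\sigma\dot\sigma)^2$ is the discriminant-type invariant of this quadratic and is therefore constant in $t$. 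Evaluating at $t=0$, where $\sigma(0)=\sigma_0$ and $\dot\sigma(0)=a_0$, gives the constant $\tfrac{1}{4m^2}$. Combining the two relations yields
\begin{align*}
\sigma^3\ddot\sigma=\sigma^2 A-\sigma^2\dot\sigma^2=\tfrac{1}{4m^2}\quad\text{ on }[0,T]\,.
\end{align*}
The evaluation $\dot\sigma(0)=a_0$ also shows that \eqref{eq:gaussian_phase} at $t=0$ reproduces \eqref{eq:gaussian_initial_phase}. Since $\gamma(0)=c_0$ and $\boldsymbol{\mu}(0)=\boldsymbol{\mu}_0$, both initial conditions \eqref{eq:initial_conditions} hold.

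Next I verify the two equations \eqref{eq:model}. Differentiating \eqref{eq:gaussian_phase} in $x$ gives \eqref{eq:gaussian_velocity_field} directly. For the continuity equation I divide by $\rho>0$ and work with logarithmic derivatives:
\begin{align*}
\partial_t\log\rho=-\tfrac{d\dot\sigma}{\sigma}+\tfrac{\dot\sigma}{\sigma^3}|\mathbf{y}|^2+\tfrac{\mathbf{u}_0\cdot \mathbf{y}}{\sigma^2}\,,\qquad
\operatorname{div}\mathbf{v}=\tfrac{d\dot\sigma}{\sigma}\,,\qquad
\mathbf{v}\cdot\nabla\log\rho=-\tfrac{1}{\sigma^2}\bigl(\mathbf{u}_0+\tfrac{\dot\sigma}{\sigma}\mathbf{y}\bigr)\cdot\mathbf{y}\,.
\end{align*}
These three terms sum to zero. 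For the phase equation, $\sqrt\rho\propto\exp(-|\mathbf{y}|^2/(4\sigma^2))$ gives
\begin{align*}
Q_{\mathrm B}(\rho)=-\tfrac{1}{2m}\Bigl(\tfrac{|\mathbf{y}|^2}{4\sigma^4}-\tfrac{d}{2\sigma^2}\Bigr)\,.
\end{align*}
I would then sort the terms of \eqref{eq:model_phase} by degree in $\mathbf{y}$.
\begin{itemize}
\item The quadratic terms combine to $\bigl(\tfrac{m}{2}\tfrac{\ddot\sigma}{\sigma}-\tfrac{1}{8m\sigma^4}\bigr)|\mathbf{y}|^2$. This vanishes exactly by the identity $\sigma^3\ddot\sigma=\tfrac{1}{4m^2}$.
\item The linear terms $-\tfrac{m\dot\sigma}{\sigma}\mathbf{u}_0\cdot \mathbf{y}$, coming from $\dot{\boldsymbol{\mu}}=\mathbf{u}_0$, cancel against the cross term of $\tfrac{1}{2m}|\nabla\theta|^2$.
\item The constant terms reduce to $\dot\gamma=\tfrac m2|\mathbf{u}_0|^2-\tfrac{d}{4m\sigma^2}$, which is the definition \eqref{eq:gaussian_scalar_phase}.
\end{itemize}
Positivity, the smoothness required in \eqref{eq:classical_solution_regularity}, and unit mass of $\rho(\cdot,t)$ are evident from the explicit formulas.

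For the transport statements, $\mathbf{v}$ is affine in $x$ with coefficient $\dot\sigma/\sigma$, which is continuous on $[0,T]$ because $\sigma\ge\sigma_0$-type positivity holds ($\sigma>0$). Hence \eqref{eq:velocity_conditions} holds. Uniqueness of the flow follows from Picard--Lindelöf, as in Proposition~\ref{prop:characteristic_flow_representation}. I would then verify the ansatz \eqref{eq:gaussian_characteristic_flow} directly. It satisfies $X(0;x)=x$, and
\begin{align*}
\partial_tX=\mathbf{u}_0+\tfrac{\dot\sigma}{\sigma_0}(x-\boldsymbol{\mu}_0)=\mathbf{u}_0+\tfrac{\dot\sigma}{\sigma}\bigl(X-\boldsymbol{\mu}(t)\bigr)\,,
\end{align*}
which is \eqref{eq:characteristic_flow_equation}. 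The pushforward identity \eqref{eq:gaussian_pushforward} then follows either from Proposition~\ref{prop:characteristic_flow_representation} or, more elementarily, from the fact that an affine map $x\mapsto\boldsymbol{\mu}(t)+\tfrac{\sigma(t)}{\sigma_0}(x-\boldsymbol{\mu}_0)$ sends $\mathcal N(\boldsymbol{\mu}_0,\sigma_0^2I_d)$ to $\mathcal N(\boldsymbol{\mu}(t),\sigma(t)^2I_d)$. That same affine-image fact gives \eqref{eq:gaussian_sample_distribution}.

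The main obstacle is the quadratic-coefficient cancellation in the phase equation, i.e.\ identifying and proving $\sigma^3\ddot\sigma=1/(4m^2)$. I would handle it through the constant-discriminant argument above rather than differentiating the square root directly, which keeps the computation short.
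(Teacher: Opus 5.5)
Your proposal is correct and follows the paper's proof essentially step by step: logarithmic derivatives for the continuity equation, the explicit Bohm potential, sorting the Hamilton--Jacobi residual by degree in $x-\boldsymbol{\mu}(t)$, direct differentiation of the affine flow, and the affine-image rule for Gaussians. Your constant-discriminant argument usefully justifies the identity $\ddot{\sigma}=\frac{1}{4m^2\sigma^3}$, which the paper only asserts. The one slip is the aside ``$\sigma\ge\sigma_0$'', which fails for $a_0<0$; you only use $\sigma>0$, and this holds because your invariant $\sigma^2A-(\sigma\dot\sigma)^2=\frac{1}{4m^2}$, a polynomial identity in $\sigma^2$, gives $\sigma^2\ge\frac{1}{4m^2A}>0$.
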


\begin{proof}
From the definition of the isotropic Gaussian probability density and the phase function \eqref{eq:gaussian_madelung_solution}, for every $(x,t)
	\in\mathbb{R}^d\times[0,T]$, we deduce that
\begin{subequations}
\label{eq:gaussian_density_derivatives}
\begin{alignat}{2}
	\nabla\log\rho(x,t)
	&=
	-\tfrac{x-\boldsymbol{\mu}(t)}{\sigma(t)^2}\,,
	\label{eq:gaussian_logarithmic_gradient}
	\\
	\partial_t\log\rho(x,t)
	&=
	-d \tfrac{\dot{\sigma}(t)}{\sigma(t)}
	+\tfrac{\mathbf{u}_0\cdot(x-\boldsymbol{\mu}(t))}{\sigma(t)^2}
	+\tfrac{\dot{\sigma}(t)}{\sigma(t)}\tfrac{|x-\boldsymbol{\mu}(t)|^2}{\sigma(t)^2}\,,\\
    \tfrac{1}{m}\nabla\theta(x,t)
	&=
	\mathbf{u}_0+\tfrac{\dot{\sigma}(t)}{\sigma(t)}(x-\boldsymbol{\mu}(t))\,.
	\label{eq:gaussian_logarithmic_time_derivative}
\end{alignat}
\end{subequations} 
Since
$\operatorname{div}\mathbf{v}(\cdot,t)=d \tfrac{\dot{\sigma}(t)}{\sigma(t)}$ in $\mathbb{R}^d$ for all $t\in [0,T]$, combining
\eqref{eq:gaussian_density_derivatives} verifies the continuity equation
\eqref{eq:model_density}.

For the isotropic Gaussian probability density \eqref{eq:gaussian_density}, the Bohm quantum potential $Q_{\mathrm{B}}(\rho)\colon \mathbb{R}^d\times[0,T]\to \mathbb{R}$, defined by \eqref{eq:bohm_quantum_potential}, for every $(x,t)\in \mathbb{R}^d\times [0,T]$, assumes the form 
\begin{align}
	\smash{Q_{\mathrm{B}}(\rho)(x,t)
	=
	\tfrac{d}{4m \sigma(t)^2}
	-\tfrac{|x-\boldsymbol{\mu}(t)|^2}{8m \sigma(t)^4}}\,.
	\label{eq:gaussian_bohm_quantum_potential}
\end{align}
Due to the definition of the quadratic phase function \eqref{eq:gaussian_phase}, \eqref{eq:gaussian_bohm_quantum_potential}, $\ddot{\sigma}=\frac{1}{4m^2\sigma^3}$ in $[0,T]$, $\sigma(0)=\sigma_0$, $\dot{\sigma}(0)=a_0$, and $\dot{\gamma}
	=
	\frac{m}{2}|\mathbf{u}_0|^2
	-\frac{d}{4m \sigma^2}$ in $[0,T]$, for every $(x,t)\in \mathbb{R}^d\times [0,T]$, we find that  
\begin{align}\notag
	\partial_t\theta(x,t)
	+\tfrac{1}{2m}|\nabla\theta(x,t)|^2
	+Q_{\mathrm{B}}(\rho)(x,t)
	&=
	\bigl(
		\tfrac{m}{2}\tfrac{\ddot{\sigma}(t)}{\sigma(t)}
		-\tfrac{1}{8m \sigma(t)^4}
	\bigr)|x-\boldsymbol{\mu}(t)|^2 +
	\dot{\gamma}(t)
	-\tfrac{m}{2}|\mathbf{u}_0|^2
	+\tfrac{d}{4m \sigma(t)^2}
    \\&=0\,,
	\label{eq:gaussian_phase_equation_residual}
\end{align}
which \hspace{-0.1mm}verifies \hspace{-0.1mm}the \hspace{-0.1mm}quantum \hspace{-0.1mm}Hamilton--Jacobi
\hspace{-0.1mm}equation \hspace{-0.1mm}\eqref{eq:model_phase}. 
\hspace{-0.5mm}The \hspace{-0.1mm}initial \hspace{-0.1mm}conditions~\hspace{-0.1mm}follow~\hspace{-0.1mm}from~\hspace{-0.1mm}\mbox{\eqref{eq:gaussian_initial}--\eqref{eq:gaussian_madelung_solution}}.

Finally, differentiating the definition of characteristic flow \eqref{eq:gaussian_characteristic_flow} in
time using \eqref{eq:gaussian_velocity_field} shows~that~it~solves
the characteristic-flow problem \eqref{eq:characteristic_flow}.
The pushforward identity follows directly~from~\eqref{eq:gaussian_density},~\eqref{eq:gaussian_characteristic_flow},
and the affine transformation rule for Gaussian random variables (\textit{cf}.~\mbox{\cite[Thm.~3.3.3, p.~32]{Tong1990}}).
\end{proof}

If we choose
$a_0=0$, $\mathbf{u}_0=0$, and $c_0=0$ in Proposition \ref{prop:isotropic_gaussian_dynamics}, we obtain
$\theta_0\equiv 0$, $\boldsymbol{\mu}\equiv \boldsymbol{\mu}_0$, and 
\begin{subequations}
\label{eq:intrinsic_gaussian_spreading}
\begin{alignat}{2}
	\sigma(t)
	&=
	\smash{\sigma_0\bigl(
		1+\tfrac{t^2}{4m^2\sigma_0^4}
	\bigr)^{1/2}}
	&&\quad\text{ for all }t\in[0,T]\,,
	\label{eq:intrinsic_gaussian_width}
	\\
	\theta(x,t)
	&=
	\tfrac{mt}{2(t^2+4m^2\sigma_0^4)}
	|x-\boldsymbol{\mu}_0|^2
	-\tfrac{d}{2}
	\arctan(\tfrac{t}{2m\sigma_0^2})
	&&\quad\text{ for all }(x,t)
	\in\mathbb{R}^d\times[0,T]\,,
	\label{eq:intrinsic_gaussian_phase}
	\\
	X(t;x)
	&=
	\boldsymbol{\mu}_0
	+\tfrac{\sigma(t)}{\sigma_0}(x-\boldsymbol{\mu}_0)
	&&\quad\text{ for all }(t,x)
	\in[0,T]\times\mathbb{R}^d\,.
	\label{eq:intrinsic_gaussian_flow}
\end{alignat}
\end{subequations}
Therefore, \hspace{-0.1mm}even \hspace{-0.1mm}for \hspace{-0.1mm}a \hspace{-0.1mm}trivial \hspace{-0.1mm}initial \hspace{-0.1mm}phase \hspace{-0.1mm}function \hspace{-0.1mm}and, \hspace{-0.1mm}thus, \hspace{-0.1mm}a \hspace{-0.1mm}vanishing \hspace{-0.1mm}initial
\hspace{-0.1mm}hydrodynamic~\hspace{-0.1mm}velocity~\hspace{-0.1mm}field, the free Madelung dynamics are
nonstationary: the Bohm quantum potential induces the radial dilation
\eqref{eq:intrinsic_gaussian_flow}, with $\sigma(t)>\sigma_0$ for all $t>0$.
This is the intrinsic quantum spreading of the Gaussian~wave~packet.

More \hspace{-0.1mm}generally, \hspace{-0.1mm}the \hspace{-0.1mm}definitions \hspace{-0.1mm}\eqref{eq:gaussian_center} \hspace{-0.1mm}and
\hspace{-0.1mm}\eqref{eq:gaussian_width} \hspace{-0.1mm}show \hspace{-0.1mm}that \hspace{-0.1mm}$\mathbf{u}_0$ \hspace{-0.1mm}determines \hspace{-0.1mm}the
\hspace{-0.1mm}terminal-time~\hspace{-0.1mm}mean~\hspace{-0.1mm}$\boldsymbol{\mu}(T)$, whereas $a_0$ determines the terminal-time
standard deviation $\sigma(T)$. Consequently, the restriction~of~the
terminal-time control-to-state map
\eqref{eq:terminal_phase_to_state_map} to this finite-dimensional
quadratic-phase~\mbox{family}~is~\mbox{explicit}. This reduces exact reachability of
isotropic Gaussian target measures to algebraic conditions, which are
analyzed~in~the~next~subsection.

\subsection{Exact Gaussian reachability by phase control}
\label{sec:exact_gaussian_reachability}

\hspace{5mm}The explicit Gaussian dynamics derived in
Proposition~\ref{prop:isotropic_gaussian_dynamics} reduce the exact
phase-control problem \eqref{eq:phase_control_problem} to algebraic
conditions on the linear and quadratic coefficients of the initial
phase. We retain the reference probability measure
$\mu_0\coloneqq\rho_0\,\mathrm{d}x
=\mathcal{N}(\boldsymbol{\mu}_0,\sigma_0^2I_d)$ from
\eqref{eq:gaussian_initial_density} and consider the isotropic Gaussian
target density $\rho_\star\colon \mathbb{R}^d\to \mathbb{R}_{>0}$, defined for every $x\in \mathbb{R}^d$ by
\begin{align}
	\rho_\star(x)
	\coloneqq
	\tfrac{1}{(2\pi\sigma_\star^2)^{d/2}}
	\exp(
		-\tfrac{|x-\boldsymbol{\mu}_\star|^2}{2\sigma_\star^2})\,,
	\label{eq:gaussian_target_density}
\end{align}
where $\boldsymbol{\mu}_\star\in\mathbb{R}^d$ and $\sigma_\star>0$. The
corresponding target probability measure is defined by
\begin{align}
	\mu_\star
	\coloneqq
	\rho_\star\,\mathrm{d}x
	=
	\mathcal{N}(\boldsymbol{\mu}_\star,\sigma_\star^2I_d)
	\quad\text{ in }\mathcal{P}(\mathbb{R}^d)\,.
	\label{eq:gaussian_target_measure}
\end{align}

\begin{theorem}[Exact reachability of isotropic Gaussian measures]
\label{thm:exact_gaussian_reachability}
There exists a quadratic initial phase function $\theta_0\colon \mathbb{R}^d\to \mathbb{R}$ of the form
\eqref{eq:gaussian_initial_phase} such that, for some $\boldsymbol{\mu}_{\star}\in \mathbb{R}^d$ and $\sigma_{\star}>0$, there holds\vspace{-0.5mm}
\begin{alignat}{2}
	[X^{\theta_0}(T;\cdot)]_{\#}
	(\rho_0\,\mathrm{d}x)
	&=
	\mu_\star
    = \mathcal{N}(\boldsymbol{\mu}_*,\sigma_*^2I_d)
	&&\quad\text{in }\mathcal{P}(\mathbb{R}^d)\,,
	\label{eq:exact_gaussian_reachability_condition}\\[-6mm]\notag
\end{alignat}
if and only if\vspace{-0.5mm}
\begin{align}
	\sigma_\star
	\geq
	\tfrac{T}{2m\sigma_0}\,.
	\label{eq:gaussian_reachability_bound}\\[-6mm]\notag
\end{align}
If \eqref{eq:gaussian_reachability_bound} holds, then the
quadratic initial phase functions of the form
\eqref{eq:gaussian_initial_phase} satisfying
\eqref{eq:exact_gaussian_reachability_condition}~are~precisely 
$\smash{\theta_{0,\star}^{\pm}}\colon \mathbb{R}^d\to \mathbb{R}$, for every $x\in \mathbb{R}^d$, given via
\begin{align}
	\theta_{0,\star}^{\pm}(x)
	\coloneqq
	\smash{\tfrac{m a_{0,\star}^{\pm}}{2\sigma_0}}
	|x-\boldsymbol{\mu}_0|^2
	+
	m\mathbf{u}_{0,\star}\cdot(x-\boldsymbol{\mu}_0)
	+c_0\,,
	\label{eq:exact_gaussian_initial_phases}\\[-6.5mm]\notag
\end{align}
where $\mathbf{u}_{0,\star}
	\coloneqq
	\tfrac{1}{T}(\boldsymbol{\mu}_\star-\boldsymbol{\mu}_0)\in \mathbb{R}^d$, $\smash{a_{0,\star}^{\pm}}\coloneqq
	\tfrac{1}{T}(-\sigma_0
	\pm(\sigma_\star^2-\tfrac{T^2}{4m^2\sigma_0^2})^{\smash{\frac{1}{2}}})\in \mathbb{R}$, and $c_0\in\mathbb{R}$ is arbitrary. If inequality 
\eqref{eq:gaussian_reachability_bound} is strict, the two quadratic
coefficients $a_{0,\star}^{\pm}\in \mathbb{R}$ are distinct; at
equality,~they~coincide.\enlargethispage{2.5mm}
\end{theorem}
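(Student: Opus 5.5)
The plan is to reduce the reachability question to algebra on the two scalar parameters $\mathbf{u}_0$ and $a_0$, using Proposition~\ref{prop:isotropic_gaussian_dynamics}. For any quadratic initial phase of the form \eqref{eq:gaussian_initial_phase}, \eqref{eq:gaussian_sample_distribution} gives
\[
[X^{\theta_0}(T;\cdot)]_{\#}(\rho_0\,\mathrm{d}x)=\mathcal{N}(\boldsymbol{\mu}(T),\sigma(T)^2I_d).
\]
Two isotropic Gaussians coincide in $\mathcal{P}(\mathbb{R}^d)$ if and only if their means and variances coincide; this can be read off, for instance, from the first and second moments or from the characteristic function. Since $\sigma(T),\sigma_\star>0$, condition \eqref{eq:exact_gaussian_reachability_condition} is therefore equivalent to the pair of equations
\[
\boldsymbol{\mu}_0+T\mathbf{u}_0=\boldsymbol{\mu}_\star,\qquad (\sigma_0+a_0T)^2+\tfrac{T^2}{4m^2\sigma_0^2}=\sigma_\star^2 .
\]
The constant $c_0$ appears in neither equation, which is why it is arbitrary.

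The first equation has the unique solution $\mathbf{u}_0=\mathbf{u}_{0,\star}$, since $T>0$. Rewrite the second as
\[
(\sigma_0+a_0T)^2=\sigma_\star^2-\tfrac{T^2}{4m^2\sigma_0^2}.
\]
This has a real solution $a_0$ if and only if the right-hand side is nonnegative, which is exactly \eqref{eq:gaussian_reachability_bound}. Thus \eqref{eq:gaussian_reachability_bound} gives both directions of the equivalence within the quadratic family.

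When \eqref{eq:gaussian_reachability_bound} holds, take square roots to get $\sigma_0+a_0T=\pm(\sigma_\star^2-\tfrac{T^2}{4m^2\sigma_0^2})^{1/2}$, and hence $a_0=a_{0,\star}^{\pm}$. These are all the solutions, so \eqref{eq:exact_gaussian_initial_phases} lists precisely the admissible phases. The two roots differ by $\tfrac{2}{T}(\cdot)^{1/2}$, so they are distinct exactly when the inequality is strict and coincide at equality.

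One point needs a brief check. Proposition~\ref{prop:isotropic_gaussian_dynamics} implicitly requires $\sigma(t)>0$ on $[0,T]$, which holds because $\sigma(t)^2\geq t^2/(4m^2\sigma_0^2)>0$ for $t>0$, even when $\sigma_0+a_0t$ vanishes, as it can for $a_{0,\star}^-$. The same bound shows that the velocity field \eqref{eq:gaussian_velocity_field} is affine with a bounded coefficient $\dot\sigma/\sigma$, so (A.4) holds and the characteristic flow of Proposition~\ref{prop:characteristic_flow_representation} agrees with \eqref{eq:gaussian_characteristic_flow}. I expect this uniqueness and identification step to be the only nonroutine part. Everything else is a direct algebraic consequence of the explicit formulas.
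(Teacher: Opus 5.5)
Your proposal is correct and follows the paper's proof essentially line by line: Proposition~\ref{prop:isotropic_gaussian_dynamics} gives the terminal law $\mathcal{N}(\boldsymbol{\mu}_0+T\mathbf{u}_0,\sigma(T)^2I_d)$. Matching means fixes $\mathbf{u}_0=\mathbf{u}_{0,\star}$, and matching variances reduces the width condition to $(\sigma_0+a_0T)^2=\sigma_\star^2-\tfrac{T^2}{4m^2\sigma_0^2}$, which is solvable exactly under the stated bound, with roots $a_{0,\star}^{\pm}$. Your extra check that $\sigma>0$ on $[0,T]$ and that the affine velocity field satisfies (A.4), so that the explicit flow is the characteristic flow, is a harmless addition that the paper simply absorbs into Proposition~\ref{prop:isotropic_gaussian_dynamics}.
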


\begin{proof}
For a quadratic initial phase function $\theta_0\colon \mathbb{R}^d\to \mathbb{R}$ of the form
\eqref{eq:gaussian_initial_phase},
Proposition~\ref{prop:isotropic_gaussian_dynamics} yields
\begin{alignat}{2}
	[X^{\theta_0}(T;\cdot)]_{\#}
	(\rho_0\,\mathrm{d}x)
	&=
	\smash{\mathcal{N}\bigl(
		\boldsymbol{\mu}_0+T\mathbf{u}_0,
		\bigl(
		(\sigma_0+a_0T)^2
		+\tfrac{T^2}{4m^2\sigma_0^2}
		\bigr)I_d
	\bigr)}
	&&\quad\text{in }\mathcal{P}(\mathbb{R}^d)\,.
	\label{eq:terminal_gaussian_parameter_to_state_map}
\end{alignat}
Therefore,
equation \eqref{eq:exact_gaussian_reachability_condition} holds if and only if
the terminal-time mean and covariance,~respectively,~satisfy\vspace{-5mm}
\begin{subequations}
\label{eq:terminal_gaussian_matching_conditions}
\begin{align}
	\boldsymbol{\mu}_\star&=\boldsymbol{\mu}_0+T\mathbf{u}_0
	\,,
	\label{eq:terminal_gaussian_mean_matching}
	\\
	\sigma_\star^2
	&=(\sigma_0+a_0T)^2
	+\smash{\tfrac{T^2}{4m^2\sigma_0^2}}
	\,.
	\label{eq:terminal_gaussian_width_matching}\\[-6mm]\notag
\end{align}
\end{subequations}
Equation \eqref{eq:terminal_gaussian_mean_matching} uniquely determines
$\mathbf{u}_0=\mathbf{u}_{0,\star}$. Moreover,
\eqref{eq:terminal_gaussian_width_matching} admits a real solution
$a_0$ if and only if \eqref{eq:gaussian_reachability_bound} holds, in
which case its solutions are precisely
$a_0=a_{0,\star}^{\pm}$. The additive constant $c_0$ does not affect
the Born probability density, hydrodynamic velocity field, or characteristic flow. 
\end{proof}

Whenever
$\theta_{0,\star}^{\pm}\in\Theta_{\mathrm{ad}}$, the separation
property \eqref{eq:probability_measure_discrepancy} implies that $\mathcal{J}(\theta_{0,\star}^{\pm})
	=0$,  
so that each reachable phase function is a global minimizer of the reduced
PDE-constrained phase identification~problem~\eqref{eq:infinite_dimensional_phase_identification}. The constant
$c_0$ in \eqref{eq:exact_gaussian_initial_phases} may be selected to
satisfy the normalization imposed in $\Theta_{\mathrm{ad}}$.

\begin{corollary}[Closed-form Gaussian sampling map]
\label{cor:closed_form_gaussian_sampling}
Let inequality \eqref{eq:gaussian_reachability_bound} hold and let the affine transport map $X_\star\colon \mathbb{R}^d\to \mathbb{R}^d$, for every $x\in \mathbb{R}^d$, be defined by\vspace{-0.5mm}
\begin{align}
	X_\star(x)
	\coloneqq
	\boldsymbol{\mu}_\star
	+\tfrac{\sigma_\star}{\sigma_0}
	(x-\boldsymbol{\mu}_0)\,.
	\label{eq:exact_gaussian_sampling_map}\\[-6mm]\notag
\end{align}
Then, for either quadratic branch
$\theta_{0,\star}^{\pm}\colon \mathbb{R}^d\to\mathbb{R}$, defined by 
\eqref{eq:exact_gaussian_initial_phases}, the terminal-time transport~map $X^{\smash{\theta_{0,\star}^{\pm}}}(T;\cdot)\colon\mathbb{R}^d\to \mathbb{R}^d$ of the 
characteristic flow $X^{\smash{\theta_{0,\star}^{\pm}}}\colon [0,T]\times \mathbb{R}^d\to \mathbb{R}^d$,~for~every~$x\in \mathbb{R}^d$,~satisfies\vspace{-0.5mm}
\begin{align}
	X^{\theta_{0,\star}^{\pm}}(T;x)
	=
	X_\star(x)\,,
	\label{eq:exact_gaussian_terminal_flow}\\[-6mm]\notag
\end{align}
and if
$X_0\colon\hspace{-0.1em}\Xi\hspace{-0.1em}\to\hspace{-0.1em}\mathbb{R}^d$ is a random variable on a probability space
$(\Xi,\mathcal F,\mathbb{P})$ satisfying
${[X_0]_{\#}\mathbb{P}\hspace{-0.1em}=\hspace{-0.1em}\rho_0\,\mathrm{d}x}$, then\vspace{-0.5mm}
\begin{alignat}{2}
	[X_\star(X_0)]_{\#}\mathbb{P}
	&=
	\mu_\star
	&&\quad\text{in }\mathcal{P}(\mathbb{R}^d)\,.
	\label{eq:exact_gaussian_generated_distribution}\\[-6mm]\notag
\end{alignat}
\end{corollary}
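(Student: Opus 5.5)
The plan is to read off the terminal-time map directly from the explicit characteristic flow \eqref{eq:gaussian_characteristic_flow} in Proposition~\ref{prop:isotropic_gaussian_dynamics}. We then insert the parameters $\mathbf{u}_{0,\star}$ and $a_{0,\star}^{\pm}$ identified in Theorem~\ref{thm:exact_gaussian_reachability}.

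First, we fix either branch $\theta_{0,\star}^{\pm}$, so that $a_0=a_{0,\star}^{\pm}$, $\mathbf{u}_0=\mathbf{u}_{0,\star}$, and $c_0\in\mathbb{R}$ is arbitrary. Since this is a phase of the form \eqref{eq:gaussian_initial_phase}, Proposition~\ref{prop:isotropic_gaussian_dynamics} applies. It gives $X^{\theta_{0,\star}^{\pm}}(T;x)=\boldsymbol{\mu}(T)+\tfrac{\sigma(T)}{\sigma_0}(x-\boldsymbol{\mu}_0)$. By \eqref{eq:gaussian_center}, $\boldsymbol{\mu}(T)=\boldsymbol{\mu}_0+T\mathbf{u}_{0,\star}=\boldsymbol{\mu}_\star$. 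By \eqref{eq:gaussian_width}, $\sigma(T)^2=(\sigma_0+a_{0,\star}^{\pm}T)^2+\tfrac{T^2}{4m^2\sigma_0^2}$.

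The key algebraic point is the following. By definition, $\sigma_0+a_{0,\star}^{\pm}T=\pm(\sigma_\star^2-\tfrac{T^2}{4m^2\sigma_0^2})^{1/2}$, and the square root is real by \eqref{eq:gaussian_reachability_bound}. Squaring gives $\sigma(T)^2=\sigma_\star^2$ for both signs. Since $\sigma(T)>0$ by definition of the width function and $\sigma_\star>0$, we conclude $\sigma(T)=\sigma_\star$. Hence $X^{\theta_{0,\star}^{\pm}}(T;\cdot)=X_\star$, which proves \eqref{eq:exact_gaussian_terminal_flow}.

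The only point requiring care is the minus branch, where $\sigma_0+a_0 t$ changes sign before time $T$. Even so, $\sigma(t)$ is the positive square root of a sum that stays strictly positive for $t>0$. Because of the term $t^2/(4m^2\sigma_0^2)$, the branch $\sigma_0+a_0t$ never enters $\sigma(T)$ with a sign, so no sign issue arises.

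Finally, for \eqref{eq:exact_gaussian_generated_distribution} we use the composition rule for pushforwards. This gives $[X_\star(X_0)]_{\#}\mathbb{P}=[X_\star]_{\#}(\rho_0\,\mathrm{d}x)=[X^{\theta_{0,\star}^{\pm}}(T;\cdot)]_{\#}(\rho_0\,\mathrm{d}x)$. By \eqref{eq:gaussian_sample_distribution} at $t=T$, or equivalently by Theorem~\ref{thm:exact_gaussian_reachability}, this equals $\mathcal{N}(\boldsymbol{\mu}_\star,\sigma_\star^2I_d)=\mu_\star$. Alternatively, the affine transformation rule for Gaussians applied to $X_\star$ gives the same conclusion directly.
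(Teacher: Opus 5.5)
Your proposal is correct and follows the paper's proof: both read off $X^{\theta_{0,\star}^{\pm}}(T;\cdot)$ from \eqref{eq:gaussian_characteristic_flow} using $\boldsymbol{\mu}(T)=\boldsymbol{\mu}_\star$ and $\sigma(T)=\sigma_\star$, which you re-verify algebraically instead of citing Theorem~\ref{thm:exact_gaussian_reachability}. For \eqref{eq:exact_gaussian_generated_distribution} you use the Gaussian pushforward \eqref{eq:gaussian_sample_distribution} where the paper cites Proposition~\ref{prop:characteristic_flow_representation}; this is an equally valid and slightly more self-contained way to conclude.
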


\begin{proof}
By Theorem~\ref{thm:exact_gaussian_reachability}, we have that
$\boldsymbol{\mu}(T)=\boldsymbol{\mu}_\star$ and $\sigma(T)=\sigma_\star$. As a consequence,
\eqref{eq:gaussian_characteristic_flow} yields
\eqref{eq:exact_gaussian_terminal_flow}, while
\eqref{eq:exact_gaussian_generated_distribution} follows from
Proposition~\ref{prop:characteristic_flow_representation}.
\end{proof}

\begin{remark}[Nonuniqueness and scope]
\label{rem:gaussian_reachability_scope}

If \eqref{eq:gaussian_reachability_bound} is strict, the two
inequivalent quadratic branches
$\theta_{0,\star}^{\pm}\colon \mathbb{R}^d\to \mathbb{R}$, modulo additive constants, generate the same
terminal-time transport map $X^{\smash{\theta_{0,\star}^{\pm}}}(T;\cdot)=X_{\star}\colon \mathbb{R}^d\to \mathbb{R}^d$, although
their intermediate width functions and characteristic flows generally
differ. In other words, terminal-time-density matching does not uniquely determine
the initial phase function.

For $d=1$, Theorem~\ref{thm:exact_gaussian_reachability} applies to
every nondegenerate Gaussian target measure. For $d>1$, it concerns
the isotropic Gaussian family; anisotropic Gaussian targets would
require matrix-valued quadratic phase coefficients. Moreover,
\eqref{eq:gaussian_reachability_bound} characterizes reachability
within the quadratic-phase-function family considered here and is not asserted
to be an obstruction for arbitrary~admissible~initial~phase~functions.
\end{remark}

The exact construction above relies on the invariance of the Gaussian
family under~the~free~Schrödinger evolution. Next, we  leave this
invariant class and study how suitable initial phase functions realize
general nonlinear potential transport directions to first order over
short~time~\mbox{intervals}~$[0,T]$,~as~${T\to0}$.\newpage

\section{Local-in-Time Realization of Nonlinear Transport Maps}
\label{sec:local_transport_realization}

\hspace{5mm}The \hspace{-0.1mm}exact \hspace{-0.1mm}reachability \hspace{-0.1mm}result \hspace{-0.1mm}in
\hspace{-0.1mm}Section~\hspace{-0.1mm}\ref{sec:exact_gaussian} \hspace{-0.1mm}relies \hspace{-0.1mm}on \hspace{-0.1mm}the \hspace{-0.1mm}invariance \hspace{-0.1mm}of \hspace{-0.1mm}the
\hspace{-0.1mm}isotropic \hspace{-0.1mm}Gaussian \hspace{-0.1mm}family \hspace{-0.1mm}under the free Schrödinger evolution. We now
consider general nonlinear potential~\mbox{transport}~\mbox{directions}~in~$\mathbb{R}^d$. The result is local in time: an initial phase function is chosen
to reproduce a prescribed potential velocity field and the resulting
Madelung flow is compared with the corresponding first-order
transport~map.

Let
$\mathbf{v}_0\in C^1(\mathbb{R}^d;\mathbb{R}^d)$ be a potential velocity field
satisfying $\mathrm{D}\mathbf{v}_0\in L^\infty(\mathbb{R}^d;\mathbb{R}^{d\times d})$. More precisely, suppose that there exists
$\varphi_0\in C^2(\mathbb{R}^d)$ such that
$\mathbf{v}_0=\nabla\varphi_0$ in $\mathbb{R}^d$, and choose the
initial~phase~function~as\vspace{-0.5mm}
\begin{align}
	\theta_0
	\coloneqq
	m\varphi_0+c_0
	\in C^2(\mathbb{R}^d)\,, \quad \text{ for some }c_0\in\mathbb{R}\,.
	\label{eq:prescribed_initial_phase}\\[-6mm]\notag
\end{align}
By
definition \eqref{eq:velocity_field}, the initial condition
\eqref{eq:initial_phase}, and
\eqref{eq:prescribed_initial_phase}, we have that
\begin{align}
\smash{	\mathbf{v}^{\theta_0}(\cdot,0)
	=
	\tfrac{1}{m}\nabla\theta_0
	=
	\mathbf{v}_0
	\quad\text{ in }\mathbb{R}^d\,.}
	\label{eq:initial_velocity_matching}
\end{align}
For the fixed initial phase function \eqref{eq:prescribed_initial_phase},  the corresponding
\emph{first-order comparison map} $\widetilde{X}_T^{\theta_0}\colon \mathbb{R}^d\to \mathbb{R}^d$ is
defined as the forward-Euler approximation of the characteristic~flow~\eqref{eq:characteristic_flow}~at~$t\hspace{-0.15em}=\hspace{-0.15em}0$,~\hspace{-0.1mm}\textit{i.e.},~\hspace{-0.1mm}for~\hspace{-0.1mm}\mbox{every}~\hspace{-0.1mm}${x\hspace{-0.15em}\in\hspace{-0.15em}\mathbb{R}^d}$, we have that\vspace{-0.5mm}
\begin{align}
    \smash{\widetilde{X}_T^{\theta_0}(x)
	\coloneqq
	x+T\mathbf v^{\theta_0}(x,0)
	=
	x+\tfrac{T}{m}\nabla\theta_0(x)
	=
	x+T\mathbf v_0(x)\,.}
	\label{eq:first_order_transport_map}\\[-6mm]\notag
\end{align}  

Next, for an initial probability density $\rho_0\colon\mathbb{R}^d\to \mathbb{R}_{>0}$, 
let
$\mu_0\coloneqq\rho_0\,\mathrm{d}x
\in\mathcal{P}_2(\mathbb{R}^d)$ be the corresponding initial probability measure and define the corresponding
\emph{first-order comparison measure} by
\begin{align}
	\smash{\widetilde{\mu}_T^{\theta_0}
	\coloneqq
	[\widetilde{X}_T^{\theta_0}]_{\#}\mu_0\in \mathcal{P}_2(\mathbb{R}^d)\,.}
	\label{eq:first_order_target_measure}\\[-6mm]\notag
\end{align}
Here, for $p\in[1,+\infty)$, $\mathcal{P}_p(\mathbb{R}^d)
	\coloneqq \{
		\mu\in\mathcal{P}(\mathbb{R}^d)
		\mid
		\int_{\mathbb{R}^d}|x|^p\,\mathrm{d}\mu(x)<+\infty \}$ 
denotes the \emph{space of Borel probability measures with finite
$p$-th moment}. For
$\mu,\nu\in\mathcal{P}_p(\mathbb{R}^d)$, their
\emph{$p$-Wasserstein~distance}~is~defined~by\vspace{-1mm}
\begin{align}
	W_p(\mu,\nu)
	\coloneqq
	\inf_{\pi\in\Pi(\mu,\nu)}
	\left(
		\int_{\mathbb{R}^d\times\mathbb{R}^d}
		|x-y|^p\,\mathrm{d}\pi(x,y)
	\right)^{1/p}\,,
	\label{eq:wasserstein_distance}\\[-6mm]\notag
\end{align}
where $\Pi(\mu,\nu)$ denotes the set of couplings of $\mu$ and $\nu$
(see, \textit{e.g.}, \cite[Sec.~7.1]{AmbrosioGigliSavare2008}).

The quantum Hamilton--Jacobi equation
\eqref{eq:model_phase} determines the subsequent evolution of the
hydrodynamic velocity field 
from its prescribed initial
value  
in
\eqref{eq:initial_velocity_matching}. Indeed, taking the spatial
gradient~of~\eqref{eq:model_phase}, using that $\mathrm{D}\mathbf{v}^{\theta_0}=\frac{1}{m}\nabla^2 \theta^{\theta_0}$ is symmetric in $\mathbb{R}^d\times[0,T]$, and dividing by $m>0$ yields~the~\emph{material acceleration}
$\mathbf{a}^{\theta_0}\colon\mathbb{R}^d\times(0,T)\to\mathbb{R}^d$, defined by\vspace{-0.5mm}
\begin{align}
	\smash{\mathbf{a}^{\theta_0}
	\coloneqq
	\partial_t\mathbf{v}^{\theta_0}
	+[\mathrm{D}\mathbf{v}^{\theta_0}]\mathbf{v}^{\theta_0}
	=
	-\tfrac{1}{m}\nabla Q_{\mathrm B}(\rho^{\theta_0})
	\quad\text{ in }\mathbb{R}^d\times(0,T)\,.}
	\label{eq:material_acceleration}\\[-6mm]\notag
\end{align}
The following theorem gives an exact integral representation of 
$X^{\theta_0}(T;\cdot)-\widetilde X_T^{\theta_0}$
in terms of the material acceleration $\mathbf a^{\theta_0}$.
A uniform bound on $\mathbf a^{\theta_0}$, equivalently on
$\nabla Q_{\mathrm B}(\rho^{\theta_0})$, then yields~\mbox{quadratic-in-time}~bounds for this map error and, through the coupling
induced by $\mu_0$, for the corresponding $W_1$- and
$W_2$-distances.

\begin{theorem}[Second-order local transport realization]
\label{thm:second_order_transport_realization}
Let Assumption~\ref{ass:classical_solution_setting} be satisfied for
the initial probability density
$\rho_0\colon\mathbb{R}^d\to\mathbb{R}_{>0}$ and the initial phase
function
$\theta_0\colon\mathbb{R}^d\to\mathbb{R}$ defined by
\eqref{eq:prescribed_initial_phase}.
Moreover, set $\mu_0\coloneqq\rho_0\,\mathrm{d}x
\in\mathcal{P}_2(\mathbb{R}^d)$ and assume that\vspace{-0.5mm}
\begin{align}
	\smash{\mathbf{a}^{\theta_0}
    \in L^\infty(\mathbb{R}^d\times(0,T);\mathbb{R}^d)\,.}
	\label{eq:material_acceleration_bound}\\[-6mm]\notag
\end{align}
Then, for every $x\in\mathbb{R}^d$, the corresponding characteristic flow
$X^{\theta_0}\colon[0,T]\times\mathbb{R}^d\to\mathbb{R}^d$ satisfies\vspace{-0.5mm} 
\begin{align}
	X^{\theta_0}(T;x)-\widetilde{X}_T^{\theta_0}(x)
	=
	\int_0^T
	(T-t)\mathbf{a}^{\theta_0}(X^{\theta_0}(t;x),t)\,\mathrm{d}t \,,
	\label{eq:characteristic_second_order_identity}\\[-6mm]\notag
\end{align}
and, consequently,
\begin{align}
	\smash{\|X^{\theta_0}(T;\cdot)-\widetilde{X}_T^{\theta_0}\|_{L^\infty(\mathbb{R}^d;\mathbb{R}^d)}
	\leq
	\tfrac{T^2}{2}\|\mathbf{a}^{\theta_0}\|_{L^\infty(\mathbb{R}^d\times(0,T);\mathbb{R}^d)}\,.}
	\label{eq:uniform_second_order_map_estimate}
\end{align}
Moreover, setting $\smash{\mu_T^{\theta_0}
	\coloneqq
	\rho^{\theta_0}(\cdot,T)\,\mathrm{d}x
	=
	[X^{\theta_0}(T;\cdot)]_{\#}\mu_0
	\in\mathcal{P}_2(\mathbb{R}^d)}$, for every $p\in\{1,2\}$, we have that
\begin{align}
	\smash{W_p(\mu_T^{\theta_0},\widetilde{\mu}_T^{\theta_0})
	\leq
	\tfrac{T^2}{2}\|\mathbf{a}^{\theta_0}\|_{L^\infty(\mathbb{R}^d\times(0,T);\mathbb{R}^d)}\,.}
	\label{eq:wasserstein_second_order_estimate}\\[-6mm]\notag
\end{align}
\end{theorem}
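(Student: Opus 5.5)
We argue pathwise along characteristics and then transfer the pointwise bound to measures through the synchronous coupling induced by $\mu_0$.

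First, Proposition~\ref{prop:characteristic_flow_representation} provides the unique global characteristic flow $X^{\theta_0}$. Condition (\hyperlink{A.3}{A.3}) ensures that $\partial_t\mathbf{v}^{\theta_0}$ and $\nabla Q_{\mathrm B}(\rho^{\theta_0})$ are continuous.

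Fix $x\in\mathbb{R}^d$ and set $\gamma(t)\coloneqq X^{\theta_0}(t;x)$. By \eqref{eq:characteristic_flow_equation}, $\dot\gamma(t)=\mathbf{v}^{\theta_0}(\gamma(t),t)$. Since $\mathbf{v}^{\theta_0}$ is $C^1$ in both variables on $\mathbb{R}^d\times(0,T)$ by (\hyperlink{A.1}{A.1}) and (\hyperlink{A.3}{A.3}), the chain rule gives
\[
\ddot\gamma(t)=\partial_t\mathbf{v}^{\theta_0}(\gamma(t),t)+[\mathrm{D}\mathbf{v}^{\theta_0}(\gamma(t),t)]\,\mathbf{v}^{\theta_0}(\gamma(t),t)=\mathbf{a}^{\theta_0}(\gamma(t),t),
\]
by \eqref{eq:material_acceleration}. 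Taylor's formula with integral remainder then yields
\[
\gamma(T)=\gamma(0)+T\dot\gamma(0)+\int_0^T(T-t)\ddot\gamma(t)\,\mathrm{d}t .
\]
Here $\gamma(0)=x$ and $\dot\gamma(0)=\mathbf{v}^{\theta_0}(x,0)=\mathbf{v}_0(x)$ by \eqref{eq:initial_velocity_matching}. Comparing with \eqref{eq:first_order_transport_map} gives \eqref{eq:characteristic_second_order_identity}.

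\textbf{Main obstacle.} The only delicate point is that $\ddot\gamma$ exists merely on the open interval $(0,T)$, while the identity involves the endpoints. We handle this by writing $\gamma(T)-\gamma(0)-T\dot\gamma(0)=\int_0^T(\dot\gamma(s)-\dot\gamma(0))\,\mathrm{d}s$ and $\dot\gamma(s)-\dot\gamma(0)=\int_0^s\ddot\gamma\,\mathrm{d}t$. The second identity holds because $\dot\gamma$ is continuous on $[0,T]$ and $C^1$ on $(0,T)$ with bounded derivative, by \eqref{eq:material_acceleration_bound}; we first integrate on $[\varepsilon,s]$ and then let $\varepsilon\to0$. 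Fubini then produces the weight $(T-t)$. A related subtlety is that \eqref{eq:material_acceleration_bound} is an $L^\infty$ bound, which is only an a.e.\ statement, whereas we need it along the curve $\gamma$. Since $\mathbf{a}^{\theta_0}$ is continuous, its essential supremum equals its pointwise supremum, so the bound holds everywhere and in particular along $\gamma$.

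The uniform estimate \eqref{eq:uniform_second_order_map_estimate} now follows from \eqref{eq:characteristic_second_order_identity} and $\int_0^T(T-t)\,\mathrm{d}t=T^2/2$.

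For the Wasserstein bounds, we use the coupling $\pi\coloneqq(X^{\theta_0}(T;\cdot),\widetilde X_T^{\theta_0})_{\#}\mu_0\in\Pi(\mu_T^{\theta_0},\widetilde\mu_T^{\theta_0})$. Its marginals are correct by \eqref{eq:deterministic_pushforward} and \eqref{eq:first_order_target_measure}. Both measures lie in $\mathcal{P}_2(\mathbb{R}^d)$: the maps $X^{\theta_0}(T;\cdot)$ and $\widetilde X_T^{\theta_0}$ have linear growth, by Gronwall's inequality with \eqref{eq:velocity_linear_growth} and by $\mathrm{D}\mathbf{v}_0\in L^\infty$ respectively, and $\mu_0\in\mathcal{P}_2(\mathbb{R}^d)$. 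Hence
\[
W_p^p\le\int_{\mathbb{R}^d}\bigl|X^{\theta_0}(T;x)-\widetilde X_T^{\theta_0}(x)\bigr|^p\,\mathrm{d}\mu_0(x)\le\Bigl(\tfrac{T^2}{2}\|\mathbf{a}^{\theta_0}\|_{L^\infty}\Bigr)^p,
\]
which is \eqref{eq:wasserstein_second_order_estimate} for $p\in\{1,2\}$.
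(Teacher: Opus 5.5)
Your proposal is correct and follows essentially the same route as the paper: a pathwise Taylor expansion with integral remainder along each characteristic, using $\partial_t^2X^{\theta_0}=\mathbf{a}^{\theta_0}(X^{\theta_0},t)$ and $\partial_tX^{\theta_0}(0;\cdot)=\mathbf{v}_0$, followed by the synchronous coupling $[(X^{\theta_0}(T;\cdot),\widetilde X_T^{\theta_0})]_{\#}\mu_0$. Your extra care elaborates points the paper treats tersely and changes nothing substantive: the $\varepsilon\to0$ endpoint argument, the remark that continuity of $\mathbf{a}^{\theta_0}$ turns the essential-supremum bound into a pointwise bound along characteristics, and the Gronwall argument for $\mu_T^{\theta_0}\in\mathcal{P}_2(\mathbb{R}^d)$ (where the paper instead uses the uniform map bound).
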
\pagebreak

\begin{proof}
To begin with, conditions~(\hyperlink{A.1}{A.1}) and
(\hyperlink{A.3}{A.3}) of
Assumption~\ref{ass:classical_solution_setting}, together with
\eqref{eq:velocity_field}, imply that $\mathbf v^{\theta_0} \in
	C^0([0,T];C^1_{\mathrm{loc}}
	(\mathbb{R}^d;\mathbb{R}^d))
	\cap
	C^1([0,T];C^0_{\mathrm{loc}}
	(\mathbb{R}^d;\mathbb{R}^d))$. Since condition~(\hyperlink{A.4}{A.4}) provides~the~global~characteristic flow, \hspace{-0.15mm}we \hspace{-0.15mm}find \hspace{-0.15mm}that, \hspace{-0.15mm}for \hspace{-0.15mm}every
\hspace{-0.15mm}$x\hspace{-0.15em}\in\hspace{-0.15em}\mathbb{R}^d$, \hspace{-0.15mm}the \hspace{-0.15mm}trajectory
\hspace{-0.15mm}$X^{\theta_0}(\cdot;x)\colon\hspace{-0.15em}[0,T]\hspace{-0.15em}\to\hspace{-0.15em}\mathbb{R}^d$ \hspace{-0.15mm}is \hspace{-0.15mm}twice~\hspace{-0.15mm}\mbox{continuously}~\hspace{-0.15mm}\mbox{differentiable}. Therefore, differentiating the characteristic-flow equation
\eqref{eq:characteristic_flow_equation} with respect to time and
using \eqref{eq:material_acceleration}, for every $(t,x)
	\in(0,T)\times\mathbb{R}^d$, we obtain\vspace{-0.5mm}
\begin{align}
    \begin{aligned}
	\partial_t^2X^{\theta_0}(t;x)
	&=
	\partial_t\mathbf{v}^{\theta_0}(X^{\theta_0}(t;x),t)
	+
	[\mathrm{D}\mathbf{v}^{\theta_0}(X^{\theta_0}(t;x),t)]\partial_tX^{\theta_0}(t;x)
    \\&=
	\partial_t\mathbf{v}^{\theta_0}(X^{\theta_0}(t;x),t)
	+
	[\mathrm{D}\mathbf{v}^{\theta_0}(X^{\theta_0}(t;x),t)]\mathbf{v}^{\theta_0}(X^{\theta_0}(t;x),t)\\&=
	\mathbf{a}^{\theta_0}(X^{\theta_0}(t;x),t)\,.
    \end{aligned}
	\label{eq:characteristic_acceleration_equation}\\[-6mm]\notag
\end{align}
In addition, \eqref{eq:initial_velocity_matching} gives $\partial_tX^{\theta_0}(0;\cdot)
	=
	\mathbf{v}^{\theta_0}(\cdot,0)
	=
	\mathbf{v}_0$ in $\mathbb{R}^d$. 
As a result, integrating \eqref{eq:characteristic_acceleration_equation} twice in
time, we arrive at\vspace{-0.5mm}
\begin{align}
	X^{\theta_0}(T;\cdot)
	=
	\operatorname{id}_{\mathbb{R}^d}+T\mathbf{v}_0
	+
	\int_0^T
	(T-t)\mathbf{a}^{\theta_0}(X^{\theta_0}(t;\cdot),t)\,\mathrm{d}t
	\quad\text{ in }\mathbb{R}^d\,,
	\label{eq:characteristic_integral_remainder}\\[-6mm]\notag
\end{align}
which together with  
\eqref{eq:first_order_transport_map} proves
\eqref{eq:characteristic_second_order_identity} and, using 
\eqref{eq:material_acceleration_bound}, also 
\eqref{eq:uniform_second_order_map_estimate}.

Since, by the assumption $\mathrm{D}\mathbf{v}_0\in L^\infty(\mathbb{R}^d;\mathbb{R}^d)$, the initial velocity field $\mathbf{v}_0\in C^1(\mathbb{R}^d;\mathbb{R}^d)$ is globally Lipschitz continuous, the first-order comparison map $\widetilde{X}_T^{\theta_0}\colon \mathbb{R}^d\to \mathbb{R}^d$ has at most
linear~growth, so that, using the uniform bound \eqref{eq:uniform_second_order_map_estimate}, from $\mu_0\in\mathcal{P}_2(\mathbb{R}^d)$, it follows that
$\mu_T^{\theta_0},\widetilde{\mu}_T^{\theta_0}\in\mathcal{P}_2(\mathbb{R}^d)$. 

Next, in order to verify the Wasserstein estimates \eqref{eq:wasserstein_second_order_estimate}, we define the probability measure\vspace{-0.5mm}
\begin{alignat}{2}
	\pi_T^{\theta_0}
	&\coloneqq
	[(X^{\theta_0}(T;\cdot),\widetilde{X}_T^{\theta_0})]_{\#}\mu_0
	&&\quad\text{in }
	\mathcal{P}(\mathbb{R}^d\times\mathbb{R}^d)\,,
	\label{eq:transport_comparison_coupling}\\[-6mm]\notag
\end{alignat}
where \hspace{-0.1mm}$(X^{\theta_0}(T;\cdot),\widetilde{X}_T^{\theta_0})\colon \hspace{-0.175em}\mathbb{R}^d\hspace{-0.175em}\to\hspace{-0.175em} \mathbb{R}^d\times \mathbb{R}^d$ \hspace{-0.1mm}is \hspace{-0.1mm}defined \hspace{-0.1mm}by \hspace{-0.1mm}$(X^{\theta_0}(T;\cdot),\widetilde{X}_T^{\theta_0})(x)\hspace{-0.15em}\coloneqq\hspace{-0.175em} (X^{\theta_0}(T;x),\widetilde{X}_T^{\theta_0}(x))$~\hspace{-0.1mm}for~\hspace{-0.1mm}all~\hspace{-0.1mm}${x\hspace{-0.175em}\in\hspace{-0.175em} \mathbb{R}^d}$.
Then, denoting by $\operatorname{pr}_i\colon \mathbb{R}^d\times \mathbb{R}^d\to \mathbb{R}^d$, $i\in \{1,2\}$, the projection onto the first $d$ or last $d$~\mbox{components}, respectively, \textit{i.e.}, $\operatorname{pr}_i(x,y)\hspace{-0.15em}\coloneqq\hspace{-0.15em} x$ if $i\hspace{-0.15em}=\hspace{-0.15em}1$ and $\operatorname{pr}_i(x,y)\hspace{-0.15em}\coloneqq \hspace{-0.15em}y$ if $i\hspace{-0.15em}=\hspace{-0.15em}2$ for all $(x,y)\in \mathbb{R}^d\hspace{-0.15em}\times\hspace{-0.15em} \mathbb{R}^d$,  
using Proposition~\ref{prop:characteristic_flow_representation}, equivalently \eqref{eq:deterministic_pushforward}, and the composition rule for pushforward measures (\textit{cf}.~\mbox{\cite[Sec.~5.2, Eq.~(5.2.4)]
{AmbrosioGigliSavare2008}}), shows that
$[\operatorname{pr}_1]_{\#}\pi_T^{\theta_0}=[\operatorname{pr}_1\circ (X^{\theta_0}(T;\cdot),\widetilde{X}_T^{\theta_0})]_{\#}\mu_0=[X^{\theta_0}(T;\cdot)]_{\#}\mu_0=\mu_T^{\theta_0}$ in $\mathcal{P}(\mathbb{R}^d)$ (\textit{i.e.},  the first marginal of $\pi_T^{\theta_0}$ is $\mu_T^{\theta_0}$),
while
\eqref{eq:first_order_target_measure} and the composition rule for pushforward measures~show~that $[\operatorname{pr}_2]_{\#}\pi_T^{\theta_0}=[\operatorname{pr}_2\circ (X^{\theta_0}(T;\cdot),\widetilde{X}_T^{\theta_0})]_{\#}\mu_0=[\widetilde{X}_T^{\theta_0}]_{\#}\mu_0=\widetilde{\mu}_T^{\theta_0}$ in $\mathcal{P}(\mathbb{R}^d)$ (\textit{i.e.},  its second marginal
is $\widetilde{\mu}_T^{\theta_0}$). Consequently, we have that $\pi_T^{\theta_0}\in\Pi(\mu_T^{\theta_0},\widetilde{\mu}_T^{\theta_0})$ and,  for every $p\in\{1,2\}$,  from the definition of the $p$-Wasserstein distance \eqref{eq:wasserstein_distance}, the uniform bound  
\eqref{eq:uniform_second_order_map_estimate}, and $\mu_0(\mathbb{R}^d)=\|\rho_0\|_{L^1(\mathbb{R}^d)}=1$, we infer that\vspace{-1mm}
\begin{align}\label{eq:wasserstein_coupling_estimate}
    \begin{aligned} 
	W_p(\mu_T^{\theta_0},\widetilde{\mu}_T^{\theta_0})
	&\leq
	\left(
		\int_{\mathbb{R}^d\times\mathbb{R}^d}
		|y-z|^p\,\mathrm{d}\pi_T^{\theta_0}(y,z)
	\right)^{1/p}
	\\
	&=
	\left(
		\int_{\mathbb{R}^d}
		|X^{\theta_0}(T;x)-\widetilde{X}_T^{\theta_0}(x)|^p\,\mathrm{d}\mu_0(x)
	\right)^{1/p} 
	\\
	&\leq
	\tfrac{T^2}{2}\|\mathbf{a}^{\theta_0}\|_{L^\infty(\mathbb{R}^d\times(0,T);\mathbb{R}^d)}\,,
     \end{aligned}\\[-6.5mm]\notag
\end{align}
which proves \eqref{eq:wasserstein_second_order_estimate}.
\end{proof}

\begin{remark}[Meaning and scope of Theorem \ref{thm:second_order_transport_realization}]
\label{rem:second_order_transport_accuracy}
Suppose that the solution exists on $[0,T_0]$ and that
$\|\mathbf{a}^{\theta_0}\|_{L^\infty(\mathbb{R}^d\times(0,T_0);\mathbb{R}^d)}<+\infty$. Applying
Theorem~\ref{thm:second_order_transport_realization} on every
$[0,T]\subseteq[0,T_0]$, for every $p\in\{1,2\}$, gives\vspace{-0.5mm}
\begin{alignat}{2}
	\|X^{\theta_0}(T;\cdot)-\widetilde{X}_T^{\theta_0}\|_{L^\infty(\mathbb{R}^d;\mathbb{R}^d)}
	+
	W_p(\mu_T^{\theta_0},\widetilde{\mu}_T^{\theta_0})
	&=
	\mathcal O(T^2)
	&&\quad(T\to0)\,.
	\label{eq:second_order_transport_asymptotics}\\[-6mm]\notag
\end{alignat}
Thus, the initial phase reproduces the prescribed potential transport
direction exactly to first order in time, while the density-dependent
material acceleration \eqref{eq:material_acceleration} contributes the
second-order remainder. The result is a local consistency statement
for the near-identity map \eqref{eq:first_order_transport_map}; it does
not assert exact or global reachability of an arbitrary target
measure.\enlargethispage{5mm}
\end{remark}

The Gaussian analysis and the local result above provide complementary
benchmarks: the former gives exact terminal reachability within an
invariant family, whereas the latter quantifies the short-time error
for general nonlinear potential transport directions. Next, we use the
explicit Gaussian solution to assess the numerical phase-identification
procedure.\newpage

\section{Numerical Validation}\enlargethispage{5mm}
\label{sec:num_validation}\vspace{-1mm}

\hspace{5mm}In this section, we investigate the numerical realization of the
PDE-constrained phase-identification framework introduced in
Subsection~\ref{subsec:pde_constrained_phase_identification}. First, 
we describe the discretization and optimization procedure common
to all experiments. Then, we present one- and two-dimensional numerical
tests, for which only the respective experimental setups and results
need to be specified.\vspace{-0.5mm}

\subsection{Numerical implementation and phase identification}
\label{subsec:numerical_implementation}\vspace{-0.5mm}

\hspace{5mm}Let $\Omega\coloneqq(-L,L)^d\subseteq\mathbb{R}^d$, with
$L>0$ and $d\in\mathbb N$, and, for $N_x\in \mathbb{N}$, let
$\mathcal{T}_h=\{K_{\boldsymbol{i}}\}_{\boldsymbol{i}\in\mathcal{I}_h}$~be~its~uniform Cartesian partition into $N_x^d$ cells of side length
$h\coloneqq \frac{2L}{N_x}$, indexed by 
$\boldsymbol{i}=(i_1,\ldots,i_d)\in
\mathcal{I}_h\coloneqq\{1,\ldots,N_x\}^d$.  The cell centers are denoted by
$x_{\boldsymbol{i}}\hspace{-0.1em}\coloneqq\hspace{-0.1em}
(-L+(i_\ell-\frac12)h)_{\ell=1}^d\hspace{-0.1em}\in\hspace{-0.1em} K_{\boldsymbol{i}} $, $\boldsymbol{i}\hspace{-0.1em}\in\hspace{-0.1em}
\mathcal{I}_h$,  and form the grid
$\mathcal G_h\hspace{-0.1em}\coloneqq\hspace{-0.1em}
\{x_{\boldsymbol{i}}\mid\boldsymbol{i}\hspace{-0.1em}\in\hspace{-0.1em}\mathcal{I}_h\}$.\linebreak
We identify scalar cell-centered grid functions with
$\mathbb{V}_h\coloneqq\mathbb{R}^{\smash{\mathcal{I}_h}}\cong\mathbb{R}^{\smash{N_x^d}}$
and equip this space with the discrete inner product
$(u_h,w_h)_h\hspace{-0.1em}\coloneqq\hspace{-0.1em}
h^d\sum_{\boldsymbol{i}\in\mathcal{I}_h}
u_{\boldsymbol{i}}w_{\boldsymbol{i}}$ and the induced norm
$\|u_h\|_h\hspace{-0.1em}\coloneqq\hspace{-0.1em}\smash{(u_h,u_h)_h^{1/2}}$,~${u_h,w_h\hspace{-0.1em}\in \hspace{-0.1em}\mathbb{V}_h}$.
Writing \hspace{-0.1mm}$\mathbf1_h\hspace{-0.15em}\in\hspace{-0.15em} \mathbb{V}_h$ \hspace{-0.1mm}for \hspace{-0.1mm}the \hspace{-0.1mm}constant \hspace{-0.1mm}grid \hspace{-0.1mm}function \hspace{-0.1mm}with \hspace{-0.1mm}value \hspace{-0.1mm}one, \hspace{-0.1mm}we
\hspace{-0.1mm}denote \hspace{-0.1mm}by
\hspace{-0.1mm}${\mathbb{V}_{h,0}\hspace{-0.15em}\coloneqq\hspace{-0.15em}
\{q_h\hspace{-0.15em}\in\hspace{-0.15em}\mathbb{V}_h\mid(q_h,\mathbf1_h)_h\hspace{-0.15em}=\hspace{-0.15em}0\}}$
the normalized phase space and by
$\mathbb{P}_h\hspace{-0.15em}\coloneqq\hspace{-0.15em}
\{r_h\hspace{-0.15em}\in\hspace{-0.15em}\mathbb{V}_h\mid
r_{\boldsymbol{i}}\hspace{-0.15em}\geq\hspace{-0.15em}0,\
(r_h,\mathbf1_h)_h\hspace{-0.15em}=\hspace{-0.15em}1\}$
the~discrete~probability~simplex.

The continuity equation \eqref{eq:model_density} is approximated by a 
conservative finite-volume scheme using Rusanov numerical fluxes
(\textit{cf}.~\cite[Chaps.~4~\&~12]{LeVeque2002}) and the quantum
Hamilton--Jacobi~equation~\eqref{eq:model_phase}~by~a~mono\-tone Godunov--Lax--Friedrichs finite-difference scheme constructed
from one-sided~\mbox{differences}~(\textit{cf}.~\mbox{\cite{KurganovNoellePetrova2001,OsherShu1991}}).
The resulting coupled semi-discrete system is advanced in time by the
third-order~\mbox{SSP--RK} method subject to a CFL restriction
(\textit{cf}.~\cite{GottliebShuTadmor2001}).
For the 
fully specified coupled discretization,~see~also~\mbox{\cite[Secs.~3~\&~5]{antil2026polarization}}.
All reported experiments use the exact Rusanov viscosity coefficient
and, after each SSP--RK3 stage, apply the density floor
$\rho_{\min}=10^{-20}\max_{\boldsymbol{i}\in \mathcal{I}_h}{\{(\rho_{0,h})_{\boldsymbol{i}}\}}$ and project the discrete phase function~to~zero~mean. No mass renormalization or clipping of the quantum potential is used; the
unit-mass condition in $\mathbb{P}_h$ is understood up to numerical tolerance
(observed relative terminal mass errors below $3\times10^{-13}$).

Moreover, let $q_h\hspace{-0.1em}\in\hspace{-0.1em}\mathbb{V}_{h,0}$ represent the scaled initial phase
$q\coloneqq\frac{1}{m}\theta_0\colon \hspace{-0.1em}\Omega\hspace{-0.1em}\to\hspace{-0.1em} \mathbb{R}$, so that
$\theta_{0,h}\coloneqq m q_h\hspace{-0.1em}\in\hspace{-0.1em} \mathbb{V}_{h,0}$. Then, the fully discrete
forward solver induces the terminal-time control-to-state map $\mathcal{S}_{T,h}\colon\mathbb{V}_{h,0}\to\mathbb{P}_h$, for every $q_h\in\mathbb{V}_{h,0}$ given via\vspace{-1mm}
\begin{align} 
	\smash{\mathcal{S}_{T,h}(q_h)
	\coloneqq
	\rho_h(\cdot,T;q_h)\quad \text{ in }\mathbb{P}_h\,.}
	\label{eq:discrete_terminal_control_to_state_map}
\end{align}
In particular, the grid values of $q_h\in\mathbb{V}_{h,0}\cong
\smash{\mathbb{R}^{\smash{N_x^d-1}}}$ form the finite-dimensional control parameters in
the discrete counterpart of
\eqref{eq:finite_dimensional_phase_identification} (\textit{i.e.}, we have that $\mathcal{A}=\mathbb{V}_{h,0}$ containing parameters $\alpha=q_h$).

As a discrepancy functional, we employ the discrete Hellinger
distance $\mathfrak D_h\colon \mathbb{P}_h\times \mathbb{P}_h\to \mathbb{R}_{\ge 0}$, defined for every  $r_h,s_h\in\mathbb{P}_h$ by\vspace{-1mm}
\begin{align}\label{eq:hellinger_distance}
    \smash{\mathfrak D_h(r_h,s_h)
\coloneqq\tfrac{1}{\sqrt{2}}\|\sqrt{r_h}-\sqrt{s_h}\|_h\,,}
\end{align}
where the square root is understood componentwise. Given a discrete target probability~density~${\rho_{\star,h}\in\mathbb{P}_h}$, we minimize the stabilized discrete cost functional $\mathcal{J}_h\colon \mathbb{V}_{h,0}\to \mathbb{R}_{\ge 0}$, defined for every $q_h\in\mathbb{V}_{h,0}$ by\vspace{-0.5mm}
\begin{align}
	\smash{\mathcal J_h(q_h)
	\coloneqq
	\mathfrak D_h^2(\mathcal{S}_{T,h}(q_h),\rho_{\star,h})
	+
	\lambda_{\mathrm s}\|\nabla_hq_h\|_h^2
	+
	\lambda_{\mathrm c}\|\mathrm D_h^2q_h\|_h^2\,,}
	\label{eq:discrete_reduced_cost_functional}\\[-6mm]\notag
\end{align}
where $\nabla_h$ and $\mathrm D_h^2$ denote the centered discrete
gradient and Hessian, respectively, and $\lambda_{\mathrm s},\lambda_{\mathrm c}\in \mathbb{R}_{>0}$.\vspace{-0.5mm}

\begin{algorithm}[H]
\caption{Discrete PDE-constrained phase-identification procedure}
\label{alg:discrete_phase_identification}
\begin{algorithmic}[1]
\Require $\rho_{0,h},\rho_{\star,h}\in \mathbb{P}_h$, $q_h^{(0)}\in \mathbb{V}_{h,0}$, $T,\lambda_{\mathrm s},\lambda_{\mathrm c}\in \mathbb{R}_{>0}$
\State Enforce $(q_h^{(0)},\mathbf 1_h)_h=0$
\For{$k=0,1,\ldots$}
	\State Set $\theta_{0,h}^{(k)}\gets m q_h^{(k)}$;
	solve the discrete free Madelung problem with initial data $(\rho_{0,h},\theta_{0,h}^{(k)})$~up~to~$T$
	\State Set
	$\rho_{T,h}^{(k)}
	\gets\mathcal{S}_{T,h}(q_h^{(k)})$;
	evaluate $\mathcal J_h(q_h^{(k)})$
	\State \textbf{if} the stopping criterion is satisfied,
	\textbf{then break}
	\State Perform one nonlinear optimization step to obtain
$q_h^{(k+1)}\in\mathbb{V}_h$; enforce
$(q_h^{(k+1)},\mathbf1_h)_h=0$
\EndFor
\State \Return $\theta_{0,h}^{\star}=m q_h^{(k)}$ and
$\rho_{T,h}^{(k)}$
\end{algorithmic}
\end{algorithm}\pagebreak

The controls in Figures~\ref{fig:density_evolution} and~\ref{fig:density_evolution_2D} are quasi-Newton approximations computed
using reverse-mode automatic differentiation through the fully discrete
solver; neither 
stationarity nor local~\mbox{optimality}~is~claimed.\vspace{-0.5mm} 

\subsection{1D experiments}
\label{sec:num_gaussian}\vspace{-0.5mm}\enlargethispage{3mm}

\hspace{5mm}In this subsection, we consider two 1D experiments within the numerical
framework of Subsection~\ref{subsec:numerical_implementation}. First,
we validate the discrete forward solver against the explicit Gaussian
dynamics derived in Section~\ref{sec:exact_gaussian}. Second, we apply
Algorithm~\ref{alg:discrete_phase_identification}, the discrete
counterpart of the PDE-constrained formulation in
Subsection~\ref{subsec:pde_constrained_phase_identification}, to an
asymmetric bimodal Gaussian-mixture target and assess the terminal-density~match.

For both experiments, we solve the 1D free Madelung
system on $\Omega=(-L,L)$, $L=10$, with~$m=1$~and terminal time 
$T=0.3$.
Unless otherwise stated, we employ $N_x=301$ spatial cells~and~${N_t=4500}$~\mbox{uniform} time steps. A zero numerical flux is imposed for
the discrete Born probability density~at~the~boundary~$\partial\Omega$. The reference probability density $\rho_0\colon \mathbb{R}\to \mathbb{R}_{>0}$ is the standard
Gaussian probability density, \textit{i.e.}, we set 
\begin{align}
	\rho_0
	\coloneqq \mathcal{N}(0,1)\,,
	\label{eq:one_dimensional_reference_density}
\end{align}
and its restriction to $\Omega$ is
normalized on the grid, so that $\rho_{0,h}\in\mathbb{P}_h$.\smallskip

$\bullet$ \emph{Gaussian forward-solver validation.}\quad As the target probability  density $\rho_{\mathrm{val}}\colon \mathbb{R}\to \mathbb{R}_{>0}$ we prescribe a Gaussian probability density, \textit{i.e.}, we set
\begin{align}
	\rho_{\mathrm{val}}
	=
	\mathcal{N}(
		\boldsymbol{\mu}_{\mathrm{val}},
		\sigma_{\mathrm{val}}^2
	)\,,\qquad 
	(\boldsymbol{\mu}_{\mathrm{val}},\sigma_{\mathrm{val}})
	=(0.8,1.2)\,.
	\label{eq:gaussian_validation_target}
\end{align}
According to \eqref{eq:gaussian_initial_phase},
\eqref{eq:gaussian_center}, and \eqref{eq:gaussian_width}, the
corresponding quadratic initial phase function $\theta_{0,\mathrm{val}}\colon \Omega\to \mathbb{R}$, for every $x\in \Omega$, is given via
\begin{align}
	\smash{\theta_{0,\mathrm{val}}(x)
	=
	\tfrac{m a_0}{2\sigma_0}
	(x-\boldsymbol{\mu}_0)^2
	+
	m\mathbf u_0
	(x-\boldsymbol{\mu}_0)
	+
	c_0\,,}
	\label{eq:gaussian_validation_initial_phase}
\end{align}
where $\boldsymbol{\mu}_0=0$, $\sigma_0=1$, $\mathbf u_0
	=\frac{1}{T}(\boldsymbol{\mu}_{\mathrm{val}}
		-\boldsymbol{\mu}_0)=
	\frac{8}{3}$, and $a_0
	=
	\frac{1}{T}
		(\smash{\sqrt{
			\sigma_{\mathrm{val}}^2
			-
			(\frac{T}{2m\sigma_0})^2
		}}
		-\sigma_0)
	\approx 0.6353$.~Among~the two values of $a_0$ satisfying the width
relation \eqref{eq:gaussian_width} at $t=T$, we choose the positive
one, for~which~the Gaussian width initially increases. By the
constant-phase invariance discussed in
Subsection~\ref{subsec:hamiltonian_structure}, the additive constant
$c_0\hspace{-0.15em}\in\hspace{-0.15em}\mathbb{R}$ may be chosen such that the grid representation of
$q_{\mathrm{val}}\hspace{-0.15em}\coloneqq\hspace{-0.15em}\frac{1}{m}\theta_{0,\mathrm{val}}$~\mbox{satisfies}~${q_{h,\mathrm{val}}\hspace{-0.15em}\in \hspace{-0.15em}\mathbb{V}_{h,0}}$.

Let
$\rho_{T,h}^{\mathrm{val}}
\coloneqq\mathcal{S}_{T,h}(q_{h,\mathrm{val}})\in \mathbb{P}_h$
be the computed discrete terminal-time Born probability density~and~let\linebreak
$\rho_{\mathrm{val},h}\hspace{-0.175em}\in\hspace{-0.175em}\mathbb{P}_h$ \hspace{-0.1mm}denote \hspace{-0.1mm}the \hspace{-0.1mm}grid-normalized
\hspace{-0.1mm}restriction \hspace{-0.1mm}of \hspace{-0.1mm}\eqref{eq:gaussian_validation_target}. \hspace{-0.1mm}For \hspace{-0.1mm}$N_x\hspace{-0.175em}=\hspace{-0.175em}301$, \hspace{-0.1mm}the \hspace{-0.1mm}terminal \hspace{-0.1mm}errors~\hspace{-0.1mm}are~\hspace{-0.1mm}of~\hspace{-0.1mm}the~\hspace{-0.1mm}\mbox{orders} 
\begin{align}
	\|\rho_{T,h}^{\mathrm{val}}-\rho_{\mathrm{val},h}\|_h
   \approx 9.3\hspace{-0.15em}\times\hspace{-0.15em}10^{-3}\,,
\quad
\|\rho_{T,h}^{\mathrm{val}}-\rho_{\mathrm{val},h}\|_{1,h}
   \approx 2.1\hspace{-0.15em}\times\hspace{-0.15em}10^{-2}\,,\quad \mathcal{D}_h(\rho_{T,h}^{\mathrm{val}},\rho_{\mathrm{val},h})\approx1.2\hspace{-0.15em}\times\hspace{-0.15em}10^{-2}\,,\label{eq:gaussian_validation_errors}
\end{align}
where
$\|r_h\|_{1,h}\coloneqq
h\sum_{\boldsymbol{i}\in\mathcal I_h}|r_{\boldsymbol{i}}|$
for $r_h\in\mathbb{V}_h$.
Figure~\ref{fig:exactsolution} compares the computed and exact \mbox{terminal-time}~pro\-bability densities and shows  decay of the errors in
\eqref{eq:gaussian_validation_errors} under  mesh-refinement,~using~$\smash{N_t=\lceil4500(\frac{N_x}{301})^2\rceil}$, so that $\Delta t=\mathcal{O}(h^2)$ and temporal
errors do not dominate.\vspace{-3.5mm}

\begin{figure}[H]
	\centering
    \includegraphics[width=\textwidth]{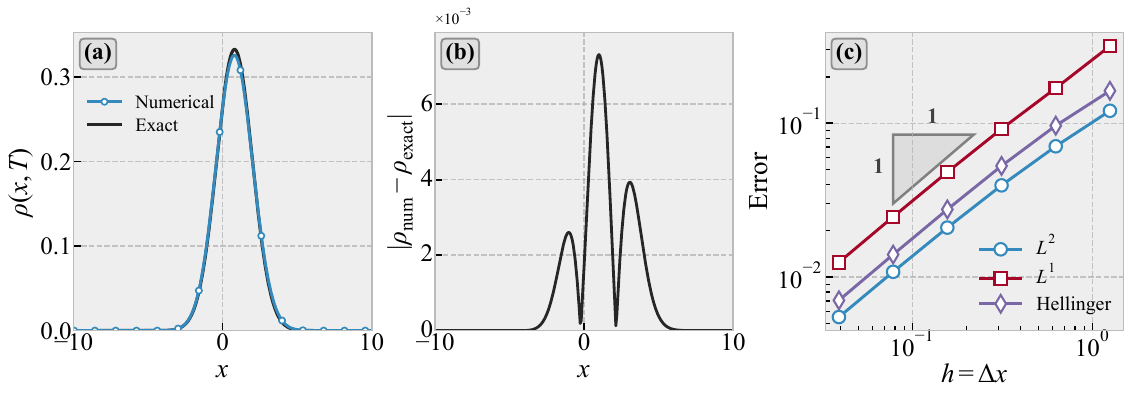}\vspace{-3.5mm}
    
	\caption{Validation of the discrete forward solver against the
	exact Gaussian solution~at~${T=0.3}$. \textbf{(a)} computed and exact
	terminal densities for $N_x=301$. \textbf{(b)} corresponding pointwise~\mbox{absolute}~error. \textbf{(c)} discrete $L^2$-, $L^1$-, and Hellinger errors
	under the uniform mesh refinements $N_x \in \{ 2^{i}\mid i=4,\ldots,9\} $, with the time step refined accordingly.}
	\label{fig:exactsolution}
\end{figure}

$\bullet$ \emph{Bimodal target and quantile-based initialization.}\quad
As the target probability density $\rho_\star\colon \mathbb{R}\to \mathbb{R}_{>0}$ we prescribe an asymmetric
bimodal Gaussian-mixture target, \textit{i.e.}, we set
\begin{align}
	\rho_\star
	= 
    (1-\omega)
	\mathcal{N}(
		\boldsymbol{\mu}_1,
		\sigma_1^2
	)+
	\omega
	\mathcal{N}(
		\boldsymbol{\mu}_2,
		\sigma_2^2
	)\,,
	\label{eq:one_dimensional_bimodal_target}
\end{align}
where $ \omega\hspace{-0.1em} = \hspace{-0.1em}\smash{\frac{2}{5}}$, $(\boldsymbol{\mu}_1,\sigma_1)
	\hspace{-0.1em}=\hspace{-0.1em}
	(4,0.5)$, and $(\boldsymbol{\mu}_2,\sigma_2)
	\hspace{-0.1em}=\hspace{-0.1em}
	(-3.6,1.5)$.
Note that, since the quadratic~phase~function
\eqref{eq:gaussian_initial_phase} preserves the isotropic Gaussian family, it
cannot generate the bimodal probability density~\eqref{eq:one_dimensional_bimodal_target}. Therefore, we  optimize the
initial phase function over the full discrete phase function space
$\mathbb{V}_{h,0}$.

In order to initialize Algorithm~\ref{alg:discrete_phase_identification}, we
use the monotone transport between the normalized restrictions of the
reference and target densities to $\Omega$. Their cumulative
distribution functions $F_0^L, F_\star^L\colon \Omega\to (0,1)$, for every $x\in \Omega$, are given via
\begin{align}
	\smash{F_0^L(x)
	\coloneqq
	\tfrac{1}{\|\rho_0\|_{L^1(\Omega)}}\textstyle\int_{-L}^{x}\rho_0(z)\,\mathrm dz\,,
	\qquad F_\star^L(x)
	\coloneqq
	\tfrac{1}{\|\rho_\star\|_{L^1(\Omega)}}\textstyle\int_{-L}^{x}\rho_\star(z)\,\mathrm dz\,.}
	\label{eq:truncated_distribution_functions}
\end{align}
The corresponding quantile transport map is
$\mathcal{T}_L\coloneqq\smash{(F_\star^L)^{-1}\circ F_0^L}\colon\Omega\to\Omega$
(\textit{cf}.~\cite[Sec.~2.1]{Santambrogio2015}); it maps equal
cumulative masses and pushes the normalized restriction of
$\mu_0=\rho_0\,\mathrm dx$ forward~to~that~of~${\mu_\star=\rho_\star\,\mathrm dx}$.

Motivated by the local transport estimates in
Theorem~\ref{thm:second_order_transport_realization} (see also
Remark~\ref{rem:second_order_transport_accuracy}), we choose the initial
velocity $\mathbf{v}_{0,\mathrm{init}}\in C^1(\Omega;\mathbb{R}^1)$
such that its first-order comparison map
$\widetilde{X}_T^{\theta_{0,\mathrm{init}}}\colon\Omega\to\Omega$
equals~$\mathcal{T}_L$,~namely,
\begin{align}
	\smash{\mathbf{v}_{0,\mathrm{init}}
	\coloneqq
	\tfrac{1}{T}(\mathcal{T}_L-\operatorname{id}_{\Omega})\quad\text{ in }\Omega\quad\Rightarrow\quad \widetilde X_T^{\theta_{0,\mathrm{init}}}
	\coloneqq
	\operatorname{id}_{\Omega}
	+
	T\mathbf v_{0,\mathrm{init}}
	=
	\mathcal{T}_L\quad\text{ in }\Omega\,.}
	\label{eq:quantile_initial_velocity}
\end{align}
Then, the associated scaled initial phase function
$q_{\mathrm{init}}\in C^2(\Omega)$ and initial phase function
$\theta_{0,\mathrm{init}}\in C^2(\Omega)$, respectively, are given via
\begin{align}
	\smash{q_{\mathrm{init}}(x)
	\coloneqq
	\textstyle\int_{-L}^{x}\mathbf{v}_{0,\mathrm{init}}(z)\,\mathrm dz
	+C_{\mathrm{init}}
	\quad\text{ for all }x\in\Omega\,,\qquad
	\theta_{0,\mathrm{init}}
	\coloneqq
	mq_{\mathrm{init}}\quad\text{ in }\Omega\,,}
	\label{eq:quantile_phase_initialization}
\end{align}
where $C_{\mathrm{init}}\in\mathbb{R}$ is chosen such that the
cell-centered grid representation of $q_{\mathrm{init}}$
satisfies $\smash{q_h^{(0)}}\in\mathbb{V}_{h,0}$. 

For comparison, the monotone transport map $\mathcal{T}\colon \mathbb{R}\to \mathbb{R}$ between the full-space
Gaussian probability measures
$\mathcal N(\boldsymbol{\mu}_0,\sigma_0^2)$ and
$\mathcal N(\boldsymbol{\mu}_\star,\sigma_\star^2)$ is affine, \textit{i.e.}, we have that
\begin{align*}
	\smash{\mathcal{T}
	=
	\boldsymbol{\mu}_\star
	+
	\tfrac{\sigma_\star}{\sigma_0}
	(\operatorname{id}_{\mathbb{R}}-\boldsymbol{\mu}_0)}\quad\text{ in }\mathbb{R}\,,
\end{align*}
and coincides with the terminal sampling map $X_\star\colon \mathbb{R}\to \mathbb{R}$ in
Corollary~\ref{cor:closed_form_gaussian_sampling}. By contrast,
the quantile transport map $\mathcal{T}_L\colon \Omega\to \Omega$ associated with the Gaussian-mixture target
\eqref{eq:one_dimensional_bimodal_target} is nonlinear and, therefore, yields a
spatially varying initial velocity field \eqref{eq:quantile_initial_velocity} and a nonquadratic initial phase function \eqref{eq:quantile_phase_initialization}.
It serves only to initialize
Algorithm~\ref{alg:discrete_phase_identification}; the subsequent
velocity field and characteristic transport are determined by the
discrete free Madelung evolution.\smallskip

$\bullet$ \emph{Full-grid phase identification.}\quad
Let $\rho_{\star,h}\in\mathbb{P}_h$ be the grid-normalized restriction
of the target probability density \eqref{eq:one_dimensional_bimodal_target}. Then, starting from the
quantile-based control $\smash{q_h^{(0)}}\in\mathbb{V}_{h,0}$ defined above, we
apply Algorithm~\ref{alg:discrete_phase_identification} to minimize
the discrete cost functional
\eqref{eq:discrete_reduced_cost_functional} with
$\lambda_{\mathrm s}=3\times10^{-6}$ and
$\lambda_{\mathrm c}=3\times10^{-7}$. Therefore, the control varies over
the full grid space $\mathbb{V}_{h,0}$ rather than a quadratic phase function
family.

Let $q_h^\star\in\mathbb{V}_{h,0}$ denote the computed scaled phase function,
$\theta_{0,h}^\star\coloneqq m q_h^\star\in \mathbb{V}_{h,0}$ the corresponding initial
phase function, and
$\rho_{T,h}^\star\coloneqq\mathcal{S}_{T,h}(q_h^\star)\in\mathbb{P}_h$ the resulting
terminal-time Born probability density. Figure~\ref{fig:density_evolution} shows the induced
density evolution.\vspace{-2.5mm}\enlargethispage{1.5mm}

\begin{figure}[H]
	\centering
    \includegraphics[width=\textwidth]{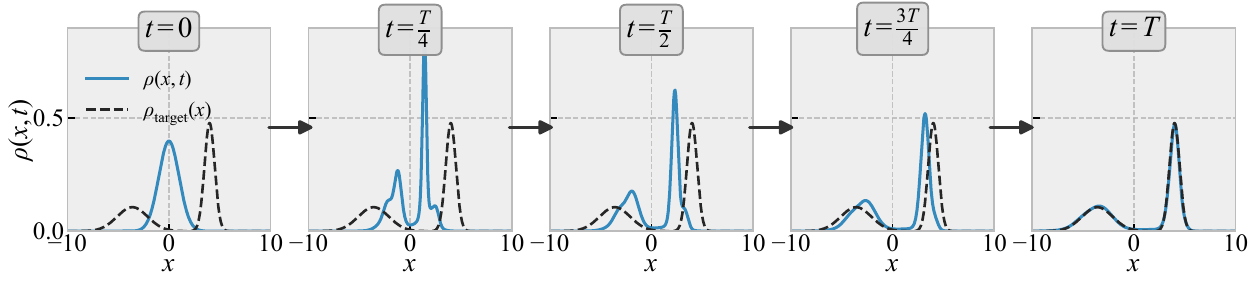}\vspace{-2.5mm} 
	\caption{Density evolution induced by the computed full-grid initial phase
 function $\theta_{0,h}^\star=mq_h^\star\in \mathbb{V}_{h,0}$, identified by minimizing the
PDE-constrained discrete cost functional
\eqref{eq:discrete_reduced_cost_functional}~to~\mbox{approximate}~the~\mbox{target} density $\rho_{\star,h}\hspace{-0.1em}\in\hspace{-0.1em} \mathbb{P}_h$ at $T\hspace{-0.1em}=\hspace{-0.1em}0.3$. The snapshots show
$t\hspace{-0.1em}=\hspace{-0.1em}0,\frac{T}{4},\frac{T}{2},\frac{3T}{4},T$ on a uniform grid~with~${N_x\hspace{-0.1em}=\hspace{-0.1em}301}$~cells.}
	\label{fig:density_evolution}
\end{figure}

The computed initial phase function $\theta_{0,h}^\star\in \mathbb{V}_{h,0}$ deforms the discrete unimodal reference density $\rho_{0,h}\in \mathbb{P}_h$ (\textit{cf}.\ \eqref{eq:one_dimensional_reference_density})
into an approximation of the asymmetric bimodal target $\rho_{\star,h}\in \mathbb{P}_h$ (\textit{cf}.\ \eqref{eq:one_dimensional_bimodal_target}), while the
subsequent velocity field is generated entirely by the discrete free
Madelung evolution.

The Gaussian benchmark in Figure~\ref{fig:exactsolution} indicates
linear convergence (in $\|\cdot\|_h$, $\|\cdot\|_{1,h}$,~and~the~discrete Hellinger distance \eqref{eq:hellinger_distance}) toward the exact dynamics, whereas
Figure~\ref{fig:density_evolution} shows~full-grid~phase~\mbox{identification}\linebreak for the non-Gaussian bimodal target probability density \eqref{eq:one_dimensional_bimodal_target}.~The~quantile~transport~map~${\mathcal{T}_L\colon\Omega\to\Omega}$
provides only the initial transport direction; the PDE-constrained
optimization adjusts the discrete initial phase function $\theta_{0,h}\in \mathbb{V}_{h,0}$ to the complete Madelung
evolution.~Thus,~only~the discrete initial phase function $\theta_{0,h}\in \mathbb{V}_{h,0}$ is optimized, while the
subsequent velocity field is generated~by~the~\mbox{governing}~\mbox{system}.\vspace{-0.5mm}

\subsection{2D experiment}
\label{subsec:two_dimensional_experiment}\vspace{-0.5mm}\enlargethispage{3.5mm}

\hspace{5mm}In this subsection, we consider a 2D experiment
within the numerical framework of~\mbox{Subsection}~\ref{subsec:numerical_implementation}. The objective is to
transport a centered isotropic Gaussian reference probability density into an
asymmetric bimodal Gaussian-mixture target probability density.
For this experiment, we solve the 2D free Madelung
\hspace{-0.1mm}system \hspace{-0.1mm}on \hspace{-0.1mm}$\Omega\hspace{-0.1em}=\hspace{-0.1em}(-L,L)^2$, $L\hspace{-0.1em}=\hspace{-0.1em}16$, \hspace{-0.1mm}with \hspace{-0.1mm}$m\hspace{-0.1em}=\hspace{-0.1em}1$ \hspace{-0.1mm}and \hspace{-0.1mm}terminal \hspace{-0.1mm}time~\hspace{-0.1mm}${T\hspace{-0.1em}=\hspace{-0.1em}0.3}$.~\hspace{-0.1mm}\mbox{Unless}~\hspace{-0.1mm}\mbox{otherwise}~\hspace{-0.1mm}stated, we employ $N_x=91$ spatial cells per coordinate
direction and
$N_t=900$ uniform time steps.  The reference probability density $\rho_0\colon \mathbb{R}^2\to \mathbb{R}_{>0}$ is the isotropic
Gaussian probability density
\begin{align}
	\rho_0
	\coloneqq
	\mathcal N(\boldsymbol{\mu}_0,\sigma_0^2I_2)\,,	\label{eq:two_dimensional_reference_density}
\end{align}
where $(\boldsymbol{\mu}_0,\sigma_0)
	=
	(\smash{(0,0)^\top},2)$, and its restriction to $\Omega$ is normalized on the grid, so that $\rho_{0,h}\in\mathbb{P}_h$.
As the target probability density, we prescribe the asymmetric
bimodal Gaussian mixture 
\begin{align}
	\rho_\star
	\coloneqq
	(1-\omega)
	\mathcal N(\boldsymbol{\mu}_1,\sigma_1^2I_2)
	+
	\omega
	\mathcal N(\boldsymbol{\mu}_2,\sigma_2^2I_2)\,,
	\label{eq:two_dimensional_target_density} 
\end{align}
where $\omega=\frac{2}{3}$, $(\boldsymbol{\mu}_1,\sigma_1)
	=(\smash{(4,4)^\top},1)$, and $(\boldsymbol{\mu}_2,\sigma_2)
	=(\smash{(-8,-8)^\top},1)$, and its restriction to $\Omega$ is normalized on the grid, so that $\rho_{\star,h}\in\mathbb{P}_h$. 

    $\bullet$ \emph{Rotated quantile-based initialization.}\quad
In order to initialize Algorithm~\ref{alg:discrete_phase_identification}, we exploit that the means $\boldsymbol{\mu}_1$ and $\boldsymbol{\mu}_2$ of the
two Gaussian components in \eqref{eq:two_dimensional_target_density} lie on the diagonal $x_1=x_2$ and
introduce the rotated coordinates 
\begin{align*}
	z=(z_1,z_2)
	\coloneqq
	\smash{\tfrac{1}{\sqrt2}}(x_1+x_2,x_1-x_2)\,.
\end{align*}
Along the $z_1$-direction, the \(z_1\)-marginals of the probability densities 
\eqref{eq:two_dimensional_reference_density} and \eqref{eq:two_dimensional_target_density} have the densities
$\varrho_{0,1}=\mathcal N(0,2^2)$ and
$\varrho_{\star,1}=(1-\omega)\mathcal N(4\sqrt2,1^2)
+\omega\mathcal N(-8\sqrt2,1^2)$, respectively. Denoting their
cumulative distribution functions by
$F_{0,1},F_{\star,1}\colon\hspace{-0.15em}\mathbb{R}\hspace{-0.15em}\to\hspace{-0.15em}(0,1)$, we define, as in the
1D experiment,~the~monotone~quantile\linebreak transport map
$\mathcal{T}_1\coloneqq F_{\star,1}^{-1}\circ F_{0,1}
\colon\mathbb{R}\to \mathbb{R}$. Along the transverse
$z_2$-direction, the marginal probability densities are
$\mathcal N(0,2^2)$ and $\mathcal N(0,1^2)$, so their monotone affine
transport map is
$\mathcal{T}_2\coloneqq (z_2\mapsto \frac{z_2}{2})\colon\mathbb{R}\to\mathbb{R}$.
Then, the first-order comparison map in these rotated coordinates is given
via $(T_1,T_2):\mathbb{R}^2\to\mathbb{R}^2$, and the associated initial
velocity field, expressed in rotated coordinates, is given via $\mathbf{v}_0\coloneqq (z\mapsto \frac{1}{T}(\mathcal{T}_1(z_1)-z_1,$ $\mathcal{T}_2(z_2)-z_2)^\top\colon \mathbb{R}^2\to \mathbb{R}^2$. 
Since the coordinate transformation is orthogonal~and~each~component~depends only on the corresponding coordinate, the resulting velocity field
is potential. 
A corresponding scaled initial phase function
$q_{\mathrm{init}}\colon\Omega\to\mathbb{R}$ and initial phase function
$\theta_{0,\mathrm{init}}\colon\Omega\to\mathbb{R}$, respectively,~for~every~$x\in \Omega$, are given by
\begin{align}
	q_{\mathrm{init}}(x_1,x_2)
	\coloneqq
	\tfrac1T
	\bigl(
		\textstyle\int_0^{z_1}
		(\mathcal{T}_1(s)-s)\,\mathrm ds
		-\tfrac14z_2^2
	\bigr)
	+C_{\mathrm{init}}\,,
	\qquad
	\theta_{0,\mathrm{init}}
	\coloneqq
	mq_{\mathrm{init}}\quad\text{ in }\Omega\,,
	\label{eq:two_dimensional_phase_initialization} 
\end{align}
where $C_{\mathrm{init}}\hspace{-0.15em}\in\hspace{-0.15em}\mathbb{R}$ is chosen such that the
cell-centered representation satisfies $\smash{q_h^{(0)}}\hspace{-0.15em}\in\hspace{-0.15em} \mathbb{V}_{h,0}$.~As~in~the~1D~experi\-ment, this construction only
initializes Algorithm~\ref{alg:discrete_phase_identification}; it is
not imposed as the terminal transport map.\smallskip

$\bullet$ \emph{Full-grid phase identification.}\quad
Starting from the quantile-based control $q_h^{(0)}\in V_{h,0}$~defined~above, we apply Algorithm~\ref{alg:discrete_phase_identification} to minimize~\eqref{eq:discrete_reduced_cost_functional}
with $\lambda_{\mathrm{s}}=3\times10^{-8}$ and
$\lambda_{\mathrm{c}}=3\times10^{-9}$. As shown in
Figure~\ref{fig:density_evolution_2D}, the computed initial phase function splits
the unimodal reference probability density into two components with
unequal masses and transport distances.  As in the 1D experiment,
only the initial phase function is optimized, whereas the subsequent velocity
field and Born probability density evolution are generated by the discrete
free Madelung system.

\begin{figure}[H]
	\centering
	\includegraphics[width=\textwidth]{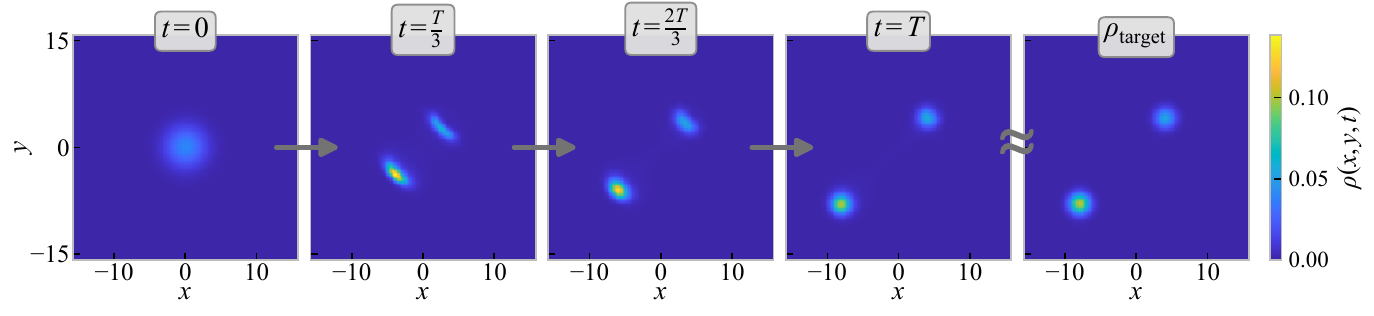}\vspace{-2.5mm}
	\caption{2D density evolution induced by the computed full-grid 
initial phase $\theta_{0,h}^\star=mq_h^\star\in \mathbb{V}_{h,0}$, identified by minimizing
the PDE-constrained discrete cost functional
\eqref{eq:discrete_reduced_cost_functional} to approximate the target probability density ${\rho_{\star,h}\in \mathbb{P}_h}$ at $T=0.3$. On a $91\times91$ grid, the first
four panels show $t=0,\frac{T}{3},\frac{2T}{3},T$, while the last panel
shows the target density. All panels use the same color scale.}
	\label{fig:density_evolution_2D}
\end{figure}

$\bullet$ \emph{Code availability.}
The MATLAB implementation and compact reference data used to reproduce
Figures~\ref{fig:exactsolution}--\ref{fig:density_evolution_2D} are archived as version~v1.0.0 on Zenodo
\cite{antil_2026_22101077}. The corresponding development repository is
available at
\url{https://github.com/alexkaltenbach/qh-gem-matlab}.

{\setlength{\bibsep}{0pt plus 0.0ex}\small
		
		\bibliographystyle{aomplain}
		\bibliography{references}
		
	}

\end{document}